\newtheorem{theorem}{Theorem}
\newtheorem{lemma}[theorem]{Lemma}
\newtheorem{corollary}[theorem]{Corollary}
\newtheorem{remark}{Remark}
\newtheorem{definition}{Definition}
\definecolor{mygray}{gray}{0.95}
\definecolor{myblue}{RGB}{0,102,153}
\definecolor{myorange}{RGB}{235, 174, 52}
\newcommand{\numval}[1]{\num[]{#1}}
\newcommand{\vertiii}[1]{{\left\vert\kern-0.25ex\left\vert\kern-0.25ex\left\vert #1 
    \right\vert\kern-0.25ex\right\vert\kern-0.25ex\right\vert}}
\DeclareMathOperator{\ddiv}{div}
\DeclareMathOperator{\divv}{div}
\renewcommand{\div}{\operatorname{div}}
\DeclareMathOperator{\Divv}{Div}
\def\bHDG{{ \mathbf{HDG}}}
\def\HDG{{\operatorname{HDG}}}
\def\hv{{\hat{\bm v}}}
\def\sig{{\bm \sigma}}
\def\ph{{\varphi}}
\newcommand{\ds}{\mathop{\mathrm{d} s}}
\newcommand{\dx}{\mathop{\mathrm{d} x}}
\newcommand{\T}{{\operatorname{T}}}
\newcommand{\Gammas}{\Gamma_{\operatorname{s}}}
\newcommand{\Gammav}{\Gamma_{\operatorname{v}}}
\def\eps{\boldsymbol \epsilon}
\def\sig{\boldsymbol \sigma}
\def\bU{\boldsymbol U}
\def\bV{\boldsymbol V}
\def\bP{\boldsymbol P}
\def\bW{\boldsymbol W}
\def\bf{\boldsymbol f}
\def\bu{\boldsymbol u}
\def\bv{\boldsymbol v}
\def\bw{\boldsymbol w}
\def\bp{\boldsymbol p}
\def\bq{\boldsymbol q}
\def\bx{\boldsymbol x}
\def\bz{\boldsymbol z}
\def\bn{\boldsymbol n}
\def\tmu{\tilde{\mu}}
\def\tlambda{\tilde{\lambda}}
\def\vh{\boldsymbol{\mathsf{v}}_h}
\def\uh{\boldsymbol{\mathsf{u}}_h}
\def\ph{\boldsymbol{\mathsf{p}}_h}
\def\qh{\boldsymbol{\mathsf{q}}_h}
\def\yh{\boldsymbol{\mathsf{y}}_h}
\def\xh{\boldsymbol{\mathsf{x}}_h}
\def\mA{\boldsymbol A}
\def\mB{\boldsymbol B}
\def\mC{\boldsymbol C}
\def\mF{\boldsymbol F}
\def\mX{\boldsymbol X}
\def\mO{\boldsymbol 0}
\def\mS{\boldsymbol S}
\def\mM{\boldsymbol M}
\title[HDG methods for MPET with medical applications] 
{Hybridized discontinuous Galerkin methods for a multiple network poroelasticity model with medical applications}
  \thanks{
    JK, ML and KO acknowledge the funding by the German Science Fund
    (DFG) - project ``Physics-oriented solvers for
    multicompartmental poromechanics'' under grant number 456235063. \\
    JS acknowledges the funding by the Austrian Science Fund
    (FWF) through the research programm ``Taming complexity in partial
    differential systems'' (F65) - project ``Automated discretization
    in multiphysics'' (P10).\\
    PL acknowledges the support by the Academy of Finland (Decision
    324611).}
  \author[J. Kraus]{Johannes Kraus}
\address{Faculty of Mathematics, University Duisburg-Essen, Germany}
\email{johannes.kraus@uni-due.de}
\author[P.~L.~Lederer]{Philip L. Lederer}
\address{Department of Mathematics and Systems Analysis, Aalto University, Finland}
\email{philip.lederer@aalto.fi}
  \author[M. Lymbery]{Maria Lymbery}
\address{Faculty of Mathematics, University Duisburg-Essen, Germany}
\email{maria.lymbery@uni-due.de}
 \author[K. Osthues]{Kevin Osthues}
\address{Faculty of Mathematics, University Duisburg-Essen, Germany}
\email{kevin.osthues@uni-due.de}
\author[J. Sch\"oberl]{Joachim Sch\"oberl}
\address{Institute for Analysis and Scientific Computing, TU Wien, Austria}
\email{joachim.schoeberl@tuwien.ac.at}
\begin{document}

\begin{abstract}
The quasi-static multiple network poroelastic theory (MPET) model, first introduced in the context of geomechanics, 
has recently found new applications in medicine. 
In practice, the parameters in the MPET equations can vary over several orders of magnitude
which makes their stable discretization and fast solution a challenging task.
Here, a new efficient parameter-robust hybridized discontinuous Galerkin method, which also features fluid mass conservation, 
is proposed for the MPET model. 
Its stability analysis which is crucial for the well-posedness of the discrete problem is performed and
cost-efficient fast parameter-robust preconditioners are derived. 
We present a series of numerical computations for a 4-network MPET model of a human brain 
which support the performance of the new algorithms.
%
  
\end{abstract}

\keywords{
  MPET model, strongly mass-conserving high-order
  discretizations, parameter-robust LBB stability, norm-equivalent
  preconditioners, hybrid discontinuous Galerkin methods, hybrid mixed
  methods}

\maketitle

\section{Introduction}\label{sec::intro}

According to a United Nations report, \cite{UNnote2020}, the proportion of people in 2020 worldwide older than 
65 was 9.3\% and is 
expected to reach 16.0\% in 2050. 
As a result of an ageing population health care services 
have to put more focus on age related diseases, in particular those affecting the brain, such as dementia and stroke as these have 
a serious impact on quality of life and consequently healthcare costs. In all this treating brain diseases is rather onerous. 
Being the most complex organ in the human body and difficult to access and study in situ, 
the brain remains substantially an enigma and non-distructive approaches are required
for the study of its physiology. One such approach is naturally 
mathematical modeling. 
A model that has been used to simulate the physiology of neurodegenerative conditions such as brain aneurism is the 
fluid-structure interaction model, see~\cite{Fresca2021real}, which consists of
a two-fields problem, coupling the incompressible Navier-Stokes equations with the
non-linear elastodynamics equation modeling the solid deformation. 
In this paper we consider the Multiple-network PoroElastic Theory (MPET) model that  
has been widely used to investigate blood and tissue fluid flow interactions in the human brain and related brain pathologies 
such as Alzheimer's disease, \cite{Guo2017subject}, acute hydrocephalus, \cite{Chou2016afully,Vardakis2016investigating}, 
ischaemic stroke, \cite{Guo2020amultiple}.  

The MPET model equations in a bounded domain $\Omega\subset \mathbb{R}^d$, $d=1,\,2,\,3$ encompassing 
$n$ fluid networks, for $i=1,\ldots,n$ are given as: 
\begin{subequations}\label{eq::threefield}
\begin{alignat}{2}
- \divv (\sig(\bu)-\bm{\alpha}\cdot\bp \bm{I}) &= \tilde{\bf}, \quad &&\mbox{in } \Omega \times (0,T), \label{eq::threefield-1} \\
K^{-1}_i \bm{w}_i + \nabla p_i &= {\boldsymbol 0}, \quad &&\mbox{in } \Omega \times (0,T),
\label{eq::threefield-2} \\
\frac{\partial}{\partial t}(s_i p_i + \alpha_i \, \divv \, \bu) + \divv \, \bm{w}_i 
+ \sum_{\substack{j=1\\ j \neq i}}^n \xi_{ij}(p_i-p_j)
&=\tilde{g}_i,  \quad &&\mbox{in } \Omega \times (0,T) 
. \label{eq::threefield-3}
\end{alignat}
\end{subequations}
The unknown physical quantities in~\eqref{eq::threefield} are the tissue deformation $\bu=\bu(\bx,t)$, the network fluid pressures 
$p_i=p_i(\bx,t)$ and respective fluxes $\bm{w}_i=\bm{w}_i(\bx,t)$, $i=1,\ldots,n$. 
In~\eqref{eq::threefield-1} we have used the vector notations $\bp=(p_1,\ldots,p_n)$ and $\bm{\alpha}=(\alpha_1,\ldots,\alpha_n)$ 
with $\alpha_i \in (0, 1]$ denoting the Biot-Willis constant for the $i$-th network.
The elastic stress tensor in~\eqref{eq::threefield-1} is defined as
$$
\sig(\bu)=2\tmu \eps(\bu)+\tlambda \divv(\bu)\bm{I},
$$
where $\eps(\bu) := (\nabla \bu + (\nabla \bu)^T)/2$ denotes the elastic strain tensor and $\tlambda$ and $\tmu$
are the Lam\'e parameters, whereas $\tilde{\bf}$ represents a body force density. 
In~\eqref{eq::threefield-2} the tensor $K_i$ represents the hydraulic conductivity for the $i$-th network. 
The constrained specific storage coefficients 
are designated by $s_i\ge 0$ in~\eqref{eq::threefield-3} while $\tilde{g}_i$ 
are fluid sources. 
The coupling of two networks $i$ and $j$ is expressed via the network transfer coefficient 
$\xi_{ij} \geq 0$ for which $\xi_{ij} = \xi_{ji}$ holds.

Solving numerically the MPET system of partial differential equations~\eqref{eq::threefield} is an arduous task since 
the various material parameters involved make it difficult to construct structure-preserving discretizations 
that are robust and stable in all parameter regimes. Only recently have there been some valuable results in this direction.
In~\cite{lee2019spacetime} 
a mixed finite element formulation based on introducing a total pressure variable has been shown to 
be robust for nearly incompressible material, small storage coefficients, and small or vanishing transfer coefficients between
networks. 
A stability analysis of the MPET model~\eqref{eq::threefield} that is fully parameter robust along with 
parameter-robust norm-equivalent preconditioners has been presented in~\cite{Hong2019conservativeMPET}. 
The latter employs discontinuous Galerkin (DG) schemes to achieve strong mass-conservation. 

A major drawback of these schemes, however, remains the high number of globally coupled degrees of freedom 
that arise in the discrete algebraic system and which consequently makes the construction of efficient fast solvers and 
preconditioners difficult. 
An approach to overcome this is hybridization, a process in which continuity constraints
of finite element spaces are removed without altering the solution, 
cf.~\cite{MR2051067,Cockburn2005incompressibleI,Cockburn2005incompressibleII}. 
This allows the construction of methods that take strengths from both continuous Galerkin (CG) and DG methods, 
see e.g.~\cite{CockburnEtAl2009unified,LedererEtAl2018hybrid,Kauffman2017overset,kraus2020uniformly,arnold2002unified}.

In this paper we propose, analyze and numerically test a new efficient parameter-robust 
hybridized discontinuous 
Galerkin method for the MPET model. 
The remainder of the paper consists of four sections. In Section~\ref{sec::problem} the initial and boundary conditions 
completing the MPET system are introduced 
and the weak formulation of the time-discrete model is presented. 
A new hybridized DG (HDG) hybrid mixed
finite element method in space is then proposed in Section~\ref{sec::disc} and 
its parameter-robust well-posedness proven in 
Section~\ref{sec::stability} which is the main theoretical contribution of this paper.
In Section~\ref{sec::numerics} a norm-equivalent preconditioner is proposed and 
the theoretical results are complemented through 
a series of numerical simulations for a 4-network MPET model of a human brain.

\section{Problem formulation}\label{sec::problem}

In this section a time-discrete weak formulation of~\eqref{eq::threefield} is 
delivered. First, 
the equations are posed and tested in suitable function spaces and integration by parts 
is applied. More specifically, the time-dependent trial functions $\bu(t),\bw(t),\bp(t)$ and test 
functions $\bv(t),\bz(t),\bq(t)$ are 
to be taken from Hilbert spaces $\bU, \bW, \bP$ which obey essential homogeneous boundary conditions. 
For the sake of simplicity, the analysis  
in Section~\ref{sec::stability} is presented for the case of 
homogeneous Dirichlet boundary conditions for $\bu$ and homogeneous Neumann conditions for 
the pressures $p_i$, i.e., homogeneous Dirichlet conditions for the fluxes $\bw_i$. 
In this situation $\bU = \bm H_0^1(\Omega)$, $\bW_i = \bm H_0(\div,\Omega)$ and $P_i=L_0^2(\Omega)$, 
$i=1,\ldots,n$.
For short, we write 
$\bw^T=(\bw_1^T,\ldots,\bw_n^T)$, $\bp^T =(p_1,\ldots,p_n)$, $\bz^T=(\bz_1^T,\ldots,\bz_n^T)$, 
$\bq^T =(q_1,\ldots,q_n)$ and also $\bW=\bW_1\times \ldots \times \bW_n $, $\bP = P_1\times \ldots \times P_n$
and the resulting variational problem reads:
For $t \in (0, T)$ and any $(\bv, \bz, \bq)\in \bU\times \bW \times \bP$ find 
$(\bm{u}(t),\bm{w}(t), \bp(t))\in \bU \times \bW \times \bP$ such that
\begin{subequations}\label{eqn:weak-3f} 
\begin{alignat}{2}
\tilde{a}(\bm{u},\bm{v}) - \sum_{i=1}^n(\alpha_i \,p_i,\ddiv \bm{v}) &= (\tilde{\bm f},\bm{v}),
    \label{eqn:weak-3f-1}\\
(K^{-1}_i\bm{w}_i,\bm{z}_i) - (p_i,\ddiv \bm{z}_i) &= 0, \quad i = 1, \ldots, n, 
\label{eqn:weak-3f-2}\\
- (\alpha_i\ddiv \partial_t \bm{u},q_i) - (\ddiv \bm{w}_i,q_i) - (s_i \partial_t  p_i, q_i)
- (\sum_{\substack{j=1\\ j \neq i}}^n \xi_{ij}(p_i-p_j),q_i)
 &= -(\tilde{g}_i,q_i), \quad i = 1, \ldots, n,
\label{eqn:weak-3f-3}
\end{alignat}
\end{subequations}
where
\begin{align}
\tilde{a}(\bm{u},\bm{v}) &:= 2\tmu\int_{\Omega}\eps(\bm{u}):\eps(\bm{v}) \dx + \tlambda\int_{\Omega} \ddiv\bm{u}\,\ddiv\bm{v} \dx.
\label{u_form_2f}
\end{align}
Other scenarios of boundary conditions
in general lead to modifications of~\eqref{eqn:weak-3f} and in certain cases changes in 
the parameter-dependent norms involved in the stability analysis may be required, see~\cite{HongKraus2018}.  The 
well-posedness of the time-continuous problem 
then is guaranteed under the initial conditions for $\bu$ and $\bp$, i.e.,
$\bm{u}(\cdot, 0) = \bm{u}_0(\cdot)$ and
$p_i(\cdot, 0) = p_{i,0}(\cdot)$ for $i=1,\ldots,n$.

Next, we use the implicit Euler method to discretize~\eqref{eqn:weak-3f} in time obtaining the 
time-step variational problem: 
Find $(\bm{u}^k, \bm{w}^k, \bp^k) := (\bm{u}(\bx,t_k),\bm{w}(\bx,t_k), \bp(\bx,t_k))\in \bU \times \bW \times \bP$ solving the system of equations
\begin{subequations}\label{eqn:weak-3f-iE} 
    \begin{alignat}{2}
        \tilde{a}(\bm{u}^k,\bm{v}) - \sum_{i=1}^n(\alpha_i \,p_i^k,\ddiv \bm{v}) &= (\tilde{\bm f}^k,\bm{v}),
        \label{eqn:weak-3f-1-iE}\\
        (K^{-1}_i\bm{w}_i^k,\bm{z}_i) - (p_i^k,\ddiv \bm{z}_i) &= 0, \quad i = 1, \ldots, n, 
        \label{eqn:weak-3f-2-iE}\\
        - (\alpha_i\ddiv \bm{u}^k,q_i) - \tau (\ddiv \bm{w}_i^k,q_i) - (s_i p_i^k, q_i)
        - \tau (\sum_{\substack{j=1\\ j \neq i}}^n \xi_{ij}(p_i^k-p_j^k),q_i)
        &= (\tilde{g}_i^k,q_i), \quad i = 1, \ldots, n,
        \label{eqn:weak-3f-3-iE}
    \end{alignat}
\end{subequations}
for all $ (\bv, \bz, \bq) \in \bU \times \bW \times \bP $, where $\tau$ denotes the time-step parameter and 
  $  \tilde{\bm f}^k  := \tilde{\bm f}(\bx, t_k)$, 
    $\tilde{g}_i^k  := -\tau \tilde{g}_i(\bx,t_k) - \alpha_i\ddiv(\bm{u}^{k-1}) - s_i p_i^{k-1}$
are the right hand sides.

%
In order to obtain a symmetric (perturbed) saddle-point problem that is handy for analysis, we apply 
the following similarity transformations to system~\eqref{eqn:weak-3f-iE} and substitutions: 
we divide \eqref{eqn:weak-3f-1-iE} by $ 2 \tmu $, multiply \eqref{eqn:weak-3f-2-iE} by $ \frac{\alpha_i}{2 \tmu} $ and \eqref{eqn:weak-3f-3-iE} by $ \alpha_i^{-1} $ and introduce
\begin{gather*}\label{scaled_param}
    \begin{gathered}
        \bm{u} := \bm{u}^k, \quad
        \bm{w}_i := \frac{\tau}{\alpha_i} \bm{w}_i^k, \quad
        p_i := \frac{\alpha_i}{2\tmu} p_i^k, \quad
        \bm{f} := \frac{\tilde{\bm{f}}^k}{2\tmu}, \quad
        g_i := \alpha_i^{-1} \tilde{g}_i^k, \\
        \lambda:=\frac{\tlambda}{2\tmu}, \quad
        R^{-1}_i:=\frac{\alpha_i^2}{2 \tmu \tau} K^{-1}_i, \quad
        \alpha_{p_i} := \frac{2\tmu}{\alpha_{i}^2} s_i, \quad
        \xi_{ii} := \sum_{\substack{j=1\\ j \neq i}}^n \xi_{ij}, 
        \\
        \zeta_{ii}:=\alpha_{p_i}+\frac{2\tmu\tau\xi_{ii}}{\alpha_{i}^2}, 
        \quad \zeta_{ij}:=-\frac{2\tmu\tau\xi_{ij}}{\alpha_{i} \alpha_j} \text{ for } i\neq j, \quad
        \bm \zeta_i := (\zeta_{i1},\ldots,\zeta_{in})^T, \quad i=1,\ldots,n.
    \end{gathered}
\end{gather*}
Now problem \eqref{eq::threefield} takes the form, cf.~\cite{Hong2019conservativeMPET}: 
Find $ (\bm{u}, \bm{w}, \bm{p}) \in \bU \times \bW \times \bP $ such that
\begin{subequations}\label{eqn:weak-3f-iE-scaled} 
    \begin{alignat}{2}
        a(\bm{u},\bm{v}) - (\bm 1\cdot \bp,\ddiv \bm{v})
        &= (\bm{f},\bm{v}),
        \label{eqn:weak-3f-1-iE-scaled}\\
        (R^{-1}_i\bm{w}_i,\bm{z}_i) - (p_i,\ddiv \bm{z}_i) &= 0, \qquad i = 1, \ldots, n, 
        \label{eqn:weak-3f-2-iE-scaled}\\
        - (\ddiv \bm{u},q_i) - (\ddiv \bm{w}_i,q_i) - (\bm \zeta_{i}\cdot \bp,q_i)
        &= (g_i,q_i), \qquad i = 1, \ldots, n,
        \label{eqn:weak-3f-3-iE-scaled}
    \end{alignat}
\end{subequations}
holds for all $ (\bv, \bz, \bq)\in \bU\times \bW \times \bP $, where $\bm 1 = (1, \ldots, 1)^T$ and
\begin{align}
    a(\bm{u},\bm{v}) &:= \int_{\Omega}\eps(\bm{u}):\eps(\bm{v}) \dx + \lambda\int_{\Omega} \ddiv\bm{u}\,\ddiv\bm{v} \dx.
    \label{u_form_3f}
\end{align}

\section{Hybridized DG/hybrid mixed methods}\label{sec::disc}



Our goal is to generalize the hybridized DG/hybrid mixed methods, which have already been studied in \cite{kraus2020uniformly} in the case of Biot's consolidation model.
Therefore, we use a hybridized discontinuous Galerkin method for the mechanics subproblem and a hybrid mixed method for the flow subproblem.

We begin with a shape-regular triangulation $\mathcal{T}_h$ with
uniform mesh size $h$ and the associated set of all facets
$\mathcal{F}_h$. Then for $i=1,\ldots,n$, we define the finite element
spaces
\begin{eqnarray*}
\bm U_h&:=&\{\bv \in \bm H_0(\divv, \Omega):\bv|_T \in \bU(T),~T \in \mathcal{T}_h\}, \\
\bm W_{i,h}&:=&\{\bz_i \in \bm H_0(\divv ,\Omega):\bz_i|_T \in \bm W_i(T),~T \in \mathcal{T}_h\}, 
\quad \bm W_h = \bW_{1,h}\times \ldots \times \bW_{n,h},
\\
P_{i,h}&:=&\{q_i \in L_0^2(\Omega):q_i|_T \in P_i(T),~T \in \mathcal{T}_h\}, \quad \bP_{h}=P_{1,h}\times \ldots \times P_{n,h}.
\end{eqnarray*}
Thereby the local spaces $\bm U(T)/\bm W_i(T)/P_i(T)$ can be chosen either as
$\text{BDM}_{\ell}(T)/\text{RT}_{\ell-1}(T)/\text{P}_{\ell-1}(T)$ or as
$\text{BDFM}_{\ell}(T)/\text{RT}_{\ell-1}(T)/\text{P}_{\ell-1}(T)$
where $\text{BD(F)M}_{\ell}(T)$, $\text{RT}_{\ell-1}(T)$, and
$\text{P}_{\ell-1}(T)$ represent the local
Brezzi-Douglas-(Fortin-)Marini space of order $\ell \ge 1$, the
Raviart-Thomas space of order $\ell-1$, and full polynomials of degree
$\ell-1$, respectively, for details, see~\cite{Boffi2013mixed}.

We exploit $\bm H(\divv)$-conforming discretizations for the mechanics subproblem since these, contrary to lower-order
 $H^1$-conforming discretizations, allow to impose incompressibility constraints exactly on the discrete level. In particular,
 pointwise divergence-free solutions can be approximated and pressure robustness achieved,
 see \cite{CockburnEtAl2002local, CockburnEtAl2005locally}.
 
Next, let $\bm{L}^2(\mathcal{F}_h)$ be the space of the vector-valued square integrable functions on the skeleton $\mathcal{F}_h$ 
and $\bm P_{\ell}(F)$ the space of vector-valued, component-wise polynomial functions of order $\ell$ on each facet $F \in \mathcal{F}_h$. 
We now introduce the spaces
\begin{align*}
\widehat{\bU}_h&:=\{ \hat{\bv} \in \bm{L}^2(\mathcal{F}_h): \hat{\bv}|_F \in \bm P_{\ell}(F) \textrm{ and }  
\hat{\bv}|_F \cdot \bn=0, ~ F\in \mathcal{F}_h, ~ \hat{\bv} = \bm 0 \text{ on }\partial{\Omega} \}, \quad \overline{\bU}_h := \bU_h \times
\widehat{\bU}_h, 
\end{align*}
as well as the HDG variable $(\bu_h,\hat{\bu}_h) \in \overline{\bU}_h$.
The unknown $\hat{\bu}_h$ is an approximation of the tangential trace of the displacement field $\bu$ and is used to weakly impose the tangential continuity, as presented in \cite{CockburnEtAl2009unified, LehrenfeldSchoeberl2016high}.
Utilizing the space $\overline{\bU}_h$, 
we replace the first term in the bilinear form $ a(\cdot, \cdot) $ defined in \eqref{u_form_3f} by $ a_h^{\text{HDG}} (\cdot, \cdot) $ given by
\begin{align}\label{a_h_HDG}
    \begin{aligned}
        a_h^{\text{HDG}}((\bu,\hat{\bu}),(\bv,\hat{\bv}))&:=
        \sum_{T\in\mathcal{T}_h}\left[ \int_{T} \eps(\bu) : \eps(\bv) \, \dx + \int_{\partial T} \eps(\bu) \bn \cdot (\hat{\bv}-\bv)_t\, \ds \right. \\
        & \quad + \left. 
        \int_{\partial T} \eps(\bv) \bn \cdot (\hat{\bu}-\bu)_t\, \ds +  \eta \ell^2 h^{-1}\int_{\partial T} (\hat{\bu}-\bu)_t \cdot (\hat{\bv}-\bv)_t \ds \right],
    \end{aligned}
\end{align}
where $(\bu, \hat{\bu})$,
$(\bv, \hat{\bv})\in \overline{\bU}_h$ and $\eta$ represents a suitable stabilization parameter independent of all model and discretization parameters.
The subscript $t$ denotes the tangential component of a vector field on a facet.
Exactly divergence-free HDG methods as well as improvements to these methods are discussed in more depth in \cite{LehrenfeldSchoeberl2016high, LedererEtAl2018hybrid, LedererEtAl2019hybrid}.

Using a hybrid mixed formulation for the flow subproblem allows for more efficient preconditioned iterative methods, since inverting a div-grad operator is more cost-efficient than inverting a grad-div operator.
In order to construct a hybrid mixed method for the $(\bw,\bp)$ system, we break up the normal 
continuity of the $\bw$ variable, i.e., placing it in the space $\bm W_h^{-}=\bm W_{1,h}^-\times \ldots \times \bm W^{-}_{n,h}$ instead of 
the space $\bm W_h$, and reinforce it by a Lagrange multiplier $\hat{\bp}_{h}\in \widehat{\bP}_h=\widehat{P}_{1,h}\times \ldots \times
\widehat{P}_{n,h}$, where  
\begin{align*}
  \bm W_{i,h}^-&:=\{\bz_{i} \in \bm L^2(\Omega):\bz_{i}|_T \in \bm W_i(T),~T \in \mathcal{T}_h \},\quad
  \widehat P_{i,h} :=\prod_{F\in \mathcal{F}_h}\text{P}_{\ell-1}(F),\quad
  \overline P_{i,h} := P_{i,h} \times \widehat P_{i,h}.               
\end{align*}
Here, the local space $\bW_i(T)$ can be chosen as before.
We also introduce the product space $\overline{\bP}_h:=\overline P_{1,h}\times \ldots \times 
\overline P_{n,h}$ and
%
%
define for all $\bm w_{i,h} \in \bm W_{i,h}^-$ and $(p_{i,h}, \hat p_{i,h}) \in \overline P_{i,h}$ the bilinear form
\begin{align}\label{form_b}
  b(\bm w_{i,h},(p_{i,h}, \hat p_{i,h})) = \sum_{T \in \mathcal{T}_h} 
  \left(\int_T \div \bm w_{i,h} \ p_{i,h} \dx - \int_{\partial T} \bm w_{i,h} \cdot \bm n \, \hat p_{i,h} \ds\right), \quad i=1,\ldots,n.
\end{align}
Note that the space $\bm W_{h}^-$ is a discontinuous version of $\bm W_{h}$ and that $\widehat{\bP}_h$ has been chosen as the normal trace space of $\bm W_h$.

With these spaces and bilinear forms at hand, we formulate the following HDG hybrid mixed finite element method 
for~\eqref{eqn:weak-3f-iE-scaled}: 
Find
$((\bu_h, \hat{\bu}_h), \bw_h, (\bp_h,\hat \bp_h) ) \in \overline{\bU}_h
\times \bm W^-_h \times \overline \bP_h$ such that
\begin{subequations}\label{eqn:MPET-disc-3-field-HDG}
    \begin{alignat}{2}
        a_h((\bu_h, \hat{\bu}_h),(\bv_h, \hat{\bv}_h)) - (\bm 1\cdot \bp_h,\ddiv \bv_h)
        &= (\bm{f},\bv_h),
        \label{eqn:MPET-disc-3-field-HDG-1}\\
        (R^{-1}_i\bw_{i,h},\bm{z}_{i,h})  - b( \bz_{i,h},(p_{i,h}, \hat p_{i,h})) &= 0, \qquad i=1,\ldots,n,
        \label{eqn:MPET-disc-3-field-HDG-2}\\
        - (\ddiv \bu_h,q_{i,h}) - b(\bw_{i,h},(q_{i,h},\hat q_{i,h})) - (\bm \zeta_{i}\cdot \bp_h,q_{i,h})
        &= (g_i,q_{i,h}), \qquad i=1,\ldots,n,
        \label{eqn:MPET-disc-3-field-HDG-3}
    \end{alignat}
\end{subequations}
holds for all $ ((\bv_h, \hv_h), \bz_h, (\bq_h,\hat \bq_h)) \in \overline{\bU}_h \times \bW_h^- \times \overline{\bP}_h $, where
\begin{equation}\label{a_h}
    a_{h}((\bu_h, \hat{\bu}_h),(\bv_h, \hat{\bv}_h)) := a_h^{\text{HDG}}((\bu_h,\hat{\bu}_h),(\bv_h,\hat{\bv}_h)) 
    +\lambda \int_{\Omega}  \divv \bu_h \, \divv \bv_h \dx.
\end{equation}

Next, we want to prove the parameter-robust stability for the methods just introduced.

\section{Stability analysis}\label{sec::stability}  

This section is devoted to the well-posedness analysis of the discrete problem~\eqref{eqn:MPET-disc-3-field-HDG} resulting from application 
of the HDG hybrid mixed method. 
To begin with, we introduce in Section~\ref{sec::parameter_norms} parameter-dependant norms, first on the continuous and then on the discrete 
level on which we will also define the related combined norm $\Vert \cdot \Vert_{\overline{\bm X}_h}$. The latter provides a norm-equivalent preconditioner 
for~\eqref{eqn:MPET-disc-3-field-HDG}. To prove parameter-robust well-posedness, 
in Section~\ref{sec::parameter}, and, in particular, inf-sup stability, we use a specific norm fitting technique that has recently been introduced in~\cite{hong2021newframework}. 
This will give rise to a different discrete norm $\Vert \cdot \Vert_{{\bm Y}_h}$ which will be proven to be equivalent to $\Vert \cdot \Vert_{\overline{\bm X}_h}$ at the end of this section. 

\subsection{Parameter-dependent norms}
\label{sec::parameter_norms}

In this subsection, we introduce the parameter-dependent norms 
for the construction of norm-equivalent operator preconditioners.

To this end, let us define the following parameter matrices
\begin{align} \label{eq::lambdamat}
\Lambda_{\zeta} & := \{\zeta_{ij}\}_{i,j=1}^n, &
\Lambda & := \Lambda_{\zeta} + \frac{1}{\lambda_0} \mathbb{I},
\end{align}
where $\lambda_0:=\max\{1,\lambda\}$ and 
$\mathbb{I}$ is the matrix of all ones. By $\Vert \cdot\Vert_0$ we denote the $L^2$-norm for scalar as well as 
vector-valued functions.

Note that $\Lambda+ RI$ is a symmetric positive definite matrix and so is $(\Lambda + RI )^{-1}$, where $R:=\min\{R_1,\ldots,R_n\}$ and $I$ denotes the identity matrix.
The Hilbert spaces $\bm U,\bm W, \bm P$ are equipped with the inner products
\begin{subequations}
\begin{eqnarray}
(\bm u,\bm v)_{\bU}& := & (\eps(\bu),\eps(\bv))+\lambda (\divv\bu,\divv\bv), \\
(\bw,\bz)_{\bW} & : =& \sum_{i=1}^n (R_i^{-1}\bw_i,\bz_i)+((\Lambda+RI)^{-1}\Divv \bw,\Divv \bz),\\
(\bw,\bz)_{\bW^{-}} & := & \sum_{i=1}^n (R_i^{-1}\bw_i,\bz_i), \\
(\bp,\bq)_{\bP} & : =& \sum_{i=1}^{n} R_{i} (p_{i}, q_{i}) + (\Lambda \bp,\bq), 
\end{eqnarray}
\end{subequations}
where $(\Divv \bw)^T = (\divv \bw_1,\ldots, \divv \bw_n)$ and the induced norms 
$\Vert \cdot\Vert_{\bU}$, $\Vert \cdot \Vert_{\bW}$, $\Vert \cdot\Vert_{\bW^-}$ and $\Vert \cdot \Vert_{\bP}$, 
respectively.


Next, we introduce several discrete norms. The hybridized discontinuous Galerkin (HDG) norm on the 
displacement product space $\overline{\bm U}_h$ 
is defined by
\begin{equation}\label{norm_HDG}
\Vert(\bm v_h, \hat{\bm v}_h) \Vert^2_{\bHDG} :=   \sum_{T\in \mathcal{T}_h} \left(\Vert \eps(\bm v_h) \Vert_{0,T}^2 
+  h^{-1}\Vert (\hat{\bm v}_h - \bm v_h)_t \Vert_{0,\partial T}^2
+  h^2\vert  \bm v_h \vert_{2,T}^2
\right),
\end{equation}
and can be used to obtain the following discrete norm 
\begin{equation}\label{norm_Uh_tilde}
\Vert (\bm v_h,\hat{\bm v}_h)\Vert_{\overline{\bm U}_h}^2 := \Vert (\bm v_h, \hat{\bm v}_h) \Vert^2_{\bHDG} 
+ \lambda \Vert \divv \bm v_h \Vert^2_0.
\end{equation} 
Further, we introduce the HDG norm $\Vert \cdot\Vert_{\HDG}$ on the pressure product space 
$\overline{\bP}_h$  by
\begin{align}
  \Vert (q_h, \hat{q}_h)\Vert^2_{\HDG} := 
  \sum_{T\in \mathcal{T}_h} \left(\Vert \nabla q_h \Vert_{0,T}^2 
+  h^{-1}\Vert \hat{ q}_h - q_h \Vert_{0,\partial T}^2
+  h^2\vert  q_h \vert_{2,T}^2\right)
,\label{hdg_p_norm}
\end{align}
and the related norm $\Vert \cdot\Vert_{\overline{\bP}_h}$ by
\begin{align} 
\Vert (\bq_h, \hat{\bq}_h)\Vert^2_{\overline{\bP}_h} :=  \sum_{i=1}^{n} R_i
 \Vert (q_{i, h}, \hat{q}_{i, h})\Vert^2_{\HDG} +  (\Lambda \bq_h, \bq_h).
\label{hdg_p_norm_scaled}
\end{align}
Finally, we denote the product space
\begin{equation}\label{product_space}
\overline{\bm X}_h := \overline{\bm U}_h \times {\bm W_h^{-}}\times \overline{\bP}_h,
\end{equation}
and equip it with the norm
\begin{equation}\label{product_norm}
\Vert((\bm v_h,\hat{\bm v}_h),\bz_h,(\bq_h,\hat{\bq}_h)) \Vert^2_{\overline{\bm X}_h} :=
 \Vert (\bm v_h,\hat{\bm v}_h) \Vert^2 _{\overline{\bm U}_h} + \Vert \bz_h\Vert^2_{\bm W^{-}} 
+\Vert (\bq_h,\hat{\bq}_h)\Vert^2_{\overline{\bP}_h} . 
\end{equation}

\subsection{Parameter-robust well-posedness} 
\label{sec::parameter}
In this subsection, we generalize our result from~\cite{kraus2020uniformly} herewith showing the uniform 
well-posedness 
of~\eqref{eqn:MPET-disc-3-field-HDG}. 

Before we explain what uniformly well-posedness means and why it is important, we recall 
an abstract stability result based on a specific norm fitting, as it has recently been presented in~\cite{hong2021newframework}. 
%
To this end, we
present~\eqref{eqn:MPET-disc-3-field-HDG} as a perturbed saddle-point
problem of the form 
\begin{align}\label{saddle-point} 
\mathsf{A}_h((\uh,\ph),(\vh,\qh)) = \mathsf{F}_h((\vh,\qh)) \qquad \forall \vh \in \bV_h, \forall \qh\in \bm Q_h,
\end{align}
where $\bV_h:= \overline{\bU}_h \times \bW_h^{-}$, $\bm Q_h: =
\overline{\bP}_h$, $\uh:=(({\bu}_h,\hat{\bu}_h), \bw_h)$,
$\vh:=(({\bv}_h,\hat{\bv}_h), \bz_h)$, $\ph:=(\bp_h,\hat{\bp}_h)$,
$\qh:=(\bq_h,\hat{\bq}_h)$ and the bilinear form $ \mathsf{A}_h((\cdot,\cdot),(\cdot,\cdot)) $ has the representation
\begin{align}\label{saddle-point_form}  
    \mathsf{A}_h((\uh,\ph),(\vh,\qh)) 
    & = \mathsf{a}_h(\uh,\vh) 
    + \mathsf{b}_h(\vh,\ph) 
    +\mathsf{b}_h(\uh,\qh) 
    - \mathsf{c}(\ph,\qh), 
\end{align} 
%
in terms of the bilinear forms $\mathsf{a}_h(\cdot,\cdot)$, $\mathsf{b}_h(\cdot,\cdot)$ and $\mathsf{c}(\cdot,\cdot)$. 
Each of these bilinear forms can be associated with a bounded linear operator as given by 
\begin{subequations}
\begin{align}
A: \bV_h\rightarrow \bV_h': \,\, & \langle A\uh,\vh\rangle_{\bV_h'\times \bV_h} 
= \mathsf{a}_h(\uh,\vh), \qquad \forall \uh,\vh\in \bV_h, \\
C:\bm Q_h\rightarrow \bm Q_h': \,\, & \langle C \ph,\qh\rangle_{\bm Q_h'\times \bm Q_h} 
= \mathsf{c}(\ph,\qh), \qquad \forall \ph,\qh\in \bm Q_h, \\
B: \bV_h\rightarrow \bm Q_h': \,\, & \langle B\vh,\qh\rangle_{\bm Q_h'\times \bm Q_h}  
= \mathsf{b}_h(\vh,\qh), \qquad \forall \vh \in \bV_h, \forall \qh\in \bm Q_h, \\ 
B^T: \bm Q_h\rightarrow \bV_h': \,\, & \langle B^T \qh,\vh\rangle_{\bV_h'\times \bV_h} 
= \mathsf{b}_h(\vh,\qh), \qquad \forall \vh \in \bV_h, \forall \qh\in \bm Q_h,
\end{align}
\end{subequations}
where $\bV_h'$ and $\bm Q_h'$ denote the dual spaces of $\bV_h$ and $\bm Q_h$, respectively, and 
$\langle \cdot, \cdot \rangle_{\bV_h'\times \bV_h}$ as well as $\langle \cdot, \cdot \rangle_{\bm Q_h'\times \bm Q_h}$ 
the corresponding duality pairings.  

%
Next we introduce the abstract norm $\Vert \cdot\Vert_{\bm Q_h}$ on the space $\bm Q_h$ and the abstract norm  
$\Vert \cdot\Vert_{\bV_h}$ on $\bm V_h$ 
which have the following representations, see~\cite{hong2021newframework},
\begin{equation} \label{Q_norm}
\Vert \qh\Vert_{\bm Q_h}^2 :=\vert \qh\vert_{\bm Q_h}^2+\mathsf{c}(\qh,\qh) 
=: \langle \bar{Q}\qh,\qh \rangle_{\bm Q_h'\times \bm Q_h},
\end{equation}
and 
\begin{align}\label{V_norm}
\Vert \vh \Vert^2_{\bV_h} &:= \vert \vh\vert^2_{\bV_h} +\vert \vh \vert^2_{\mathsf{b}}.
\end{align}

Assuming that the seminorm $\vert \cdot\vert^2_{\bm Q_h}$ is defined via a symmetric positive semidefinite (SPSD) bilinear 
form $d(\cdot,\cdot)$, i.e., $\vert \qh \vert_{\bm Q_h}^2=d(\qh,\qh)$ and $(c(\cdot,\cdot)+d(\cdot,\cdot))$ is symmetric positive 
definite (SPD), the operator
 $\bar{Q}: \bm Q_h\rightarrow \bm Q_h'$ is a linear invertible operator, the Riesz isomorphism corresponding to the inner product 
 $(\cdot,\cdot)_{\bm Q_h}$ inducing the norm $\Vert \cdot\Vert_{\bm Q_h}$.
Moreover, $\vert \cdot \vert_{\bV_h}$ is a proper seminorm, which is a norm on ${\rm Ker}(B)$ satisfying  
$$
\vert \vh\vert_{\bV_h}^2 \eqsim  \mathsf{a}_h(\vh,\vh) \qquad \forall \vh \in {\rm Ker}(B),
$$ 
and $\vert \cdot\vert_{\mathsf{b}}$ is defined by 
\begin{align}\label{seminorm_b}
\vert \vh \vert^2_{\mathsf{b}}:= \langle B\vh, \bar{Q}^{-1}B\vh \rangle_{\bm Q_h' \times \bm Q_h}.
\end{align}

Note that $\bar{Q}^{-1}:\bm Q_h'\rightarrow \bm Q_h$ is an isometric isomorphism and it is fulfilled that
\begin{align*}
\langle B\vh, \bar{Q}^{-1}B\vh\rangle_{\bm Q_h'\times \bm Q_h}
& = \langle \bar{Q}\bar{Q}^{-1}B\vh,\bar{Q}^{-1}B\vh \rangle_{\bm Q_h'\times\bm Q_h} \\
& = (\bar{Q}^{-1}B\vh,\bar{Q}^{-1}B\vh)_{\bm Q_h}
= \Vert \bar{Q}^{-1}B\vh\Vert_{\bm Q_h}^2
= \Vert B\vh\Vert_{\bm Q_h'}^2.
\end{align*}

Additionally, we introduce the product space $\bm Y_h:=\bV_h\times \bm Q_h$ and equip it  
with the norm $\Vert \cdot \Vert_{\bm Y_h}$ defined by 
\begin{equation}\label{y_norm}
\Vert \yh \Vert_{\bm Y_h}^2
= \Vert \vh\Vert_{\bV_h}^2 
+ \Vert \qh\Vert_{\bm Q_h}^2,\quad \forall \yh = 
(\vh,\qh)\in \bm Y_h. 
\end{equation}

Then the following theorem is an analogue to Brezzi's splitting theorem for the abstract perturbed saddle-point 
problem~\eqref{saddle-point}--\eqref {saddle-point_form}. 
\begin{theorem}\label{our_theorem}
Let $\Vert \cdot\Vert_{\bV_h}$ and $\Vert \cdot \Vert_{\bm Q_h}$ be defined according to~\eqref{Q_norm}--\eqref{V_norm} 
and consider the bilinear form $\mathsf{A}_h((\cdot,\cdot),(\cdot,\cdot))$ from~\eqref{saddle-point_form} 
for which $\mathsf{a}_h(\cdot,\cdot)$ is continuous and $\mathsf{a}_h(\cdot,\cdot)$ and 
$\mathsf{c}(\cdot,\cdot)$ are symmetric positive semidefinite. 

Further, assume that $\mathsf{a}_h(\cdot,\cdot)$ satisfies the coercivity estimate
\begin{equation}\label{coerc_a}
\mathsf{a}_h(\vh,\vh)\ge \underline{C}_{\mathsf{a}} \vert \vh\vert_{\bV_h}^2, \qquad \forall \vh\in {\bV_h},
\end{equation}
and that there exists a constant 
$\underline{\beta}>0$ such that 
\begin{equation}\label{inf_sup_b}
\sup_{\substack{\vh\in {\bm V_h}\\ \vh\neq 0}} 
\frac{\mathsf{b}_h(\vh,\qh)}{\Vert \vh\Vert_{\bm V_h}}\ge \underline{\beta}  \vert \qh\vert_{\bm Q_h}, 
\qquad \forall \qh\in {\bm Q}_h.
\end{equation} 

Then the bilinear form $\mathsf{A}_h((\cdot,\cdot),(\cdot,\cdot))$ is continuous and inf-sup stable under the combined norm 
$\Vert \cdot \Vert_{\bm Y_h}$ defined in~\eqref{y_norm}, i.e., the conditions
\begin{equation}\label{boundedness_A}
\mathsf{A}_h(\xh, \yh) \le \overline{C} \Vert \xh\Vert_{\bm Y_h} \Vert \yh\Vert_{{\bm Y_h}}, 
\qquad \forall \xh, \yh \in \bm Y_h,
\end{equation}
and
\begin{equation}\label{inf_sup_A}
\inf_{\xh \in {\bm Y_h}} \sup_{\yh \in \bm Y_h} \frac{\mathsf{A}_h(\xh,\yh)}
{\Vert \xh\Vert_{\bm Y_h} \Vert \yh\Vert_{\bm Y_h}} \ge \underline{\alpha}>0
\end{equation}
hold, where $\xh:=(\uh,\ph)$.
\end{theorem}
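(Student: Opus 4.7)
The plan is to prove the statement via a standard Brezzi-type argument for perturbed saddle-point problems, adapted to the norm-fitting framework introduced just before the theorem. Continuity comes essentially for free from the abstract setup, while the inf-sup condition requires constructing a suitable test direction from three canonical pieces.

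For continuity~\eqref{boundedness_A}, I would bound each of the four bilinear forms in~\eqref{saddle-point_form} separately. The $\mathsf{a}_h$-term is bounded by assumption. For $\mathsf{c}$, the SPSD property together with~\eqref{Q_norm} gives $\mathsf{c}(\ph,\qh)\le \mathsf{c}(\ph,\ph)^{1/2}\mathsf{c}(\qh,\qh)^{1/2}\le \|\ph\|_{\bm Q_h}\|\qh\|_{\bm Q_h}$. For $\mathsf{b}_h$, the identity recorded just before the theorem, namely $|\vh|^2_{\mathsf{b}} = \|B\vh\|^2_{\bm Q_h'}$, combined with duality, yields $|\mathsf{b}_h(\vh,\qh)|\le \|B\vh\|_{\bm Q_h'}\|\qh\|_{\bm Q_h}= |\vh|_{\mathsf{b}}\|\qh\|_{\bm Q_h}\le \|\vh\|_{\bV_h}\|\qh\|_{\bm Q_h}$.

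For the inf-sup condition~\eqref{inf_sup_A}, given $\xh=(\uh,\ph)\in \bm Y_h$, I plan to construct $\yh = \yh^{(1)} + t\,\yh^{(2)} + r\,\yh^{(3)}$ from three canonical choices with $t,r>0$ to be fixed. First, $\yh^{(1)}:=(\uh,-\ph)$ yields $\mathsf{A}_h(\xh,\yh^{(1)}) = \mathsf{a}_h(\uh,\uh) + \mathsf{c}(\ph,\ph) \ge \underline{C}_{\mathsf{a}}|\uh|^2_{\bV_h} + \mathsf{c}(\ph,\ph)$ by~\eqref{coerc_a}. Second, $\yh^{(2)}:=(\w_h,0)$, where $\w_h\in\bV_h$ is chosen via~\eqref{inf_sup_b} so that $\mathsf{b}_h(\w_h,\ph)\ge \tfrac{\underline\beta}{2}|\ph|^2_{\bm Q_h}$ and $\|\w_h\|_{\bV_h}\le |\ph|_{\bm Q_h}$, extracts $|\ph|^2_{\bm Q_h}$ up to a cross term $\mathsf{a}_h(\uh,\w_h)$ that is absorbed by Young's inequality into the $|\uh|^2_{\bV_h}$ contribution from $\yh^{(1)}$. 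Third, $\yh^{(3)}:=(0,\bar Q^{-1}B\uh)$ gives $\mathsf{A}_h(\xh,\yh^{(3)}) = |\uh|^2_{\mathsf{b}} - \mathsf{c}(\ph,\bar Q^{-1}B\uh)$, and the second term is controlled by another Young step using $\mathsf{c}(\qh,\qh)\le \|\qh\|^2_{\bm Q_h}$ together with the isometry identity $\|\bar Q^{-1}B\uh\|_{\bm Q_h}=|\uh|_{\mathsf{b}}$. Summing the three contributions and choosing $t,r>0$ sufficiently small then gives $\mathsf{A}_h(\xh,\yh)\gtrsim \|\xh\|^2_{\bm Y_h}$, while the individual norm bounds on $\w_h$ and on $\bar Q^{-1}B\uh$ deliver $\|\yh\|_{\bm Y_h}\lesssim \|\xh\|_{\bm Y_h}$, proving~\eqref{inf_sup_A}.

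The main technical obstacle is the simultaneous tuning of the parameters $t$ and $r$: every Young-absorption step must leave a strictly positive coefficient in front of all four pieces $|\uh|^2_{\bV_h}$, $|\uh|^2_{\mathsf{b}}$, $|\ph|^2_{\bm Q_h}$, and $\mathsf{c}(\ph,\ph)$ that together make up $\|\xh\|^2_{\bm Y_h}$. The resulting weights will depend only on $\underline{C}_{\mathsf{a}}$, $\underline{\beta}$ and the continuity constant of $\mathsf{a}_h$; the fact that no further problem-dependent quantities enter is precisely what will make the inf-sup constant $\underline{\alpha}$ robust with respect to all model parameters, which is the ultimate aim of the stability analysis.
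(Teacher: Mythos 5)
Your proposal is correct and follows the standard argument for this abstract result; note that the paper itself does not reproduce a proof but simply cites~\cite{hong2021newframework}, where essentially the same construction is used. In particular, your three test directions --- $(\uh,-\ph)$ for the diagonal coercivity, a scaled $\mathsf{b}_h$-inf-sup element to recover $\vert \ph\vert_{\bm Q_h}^2$, and $(0,\bar{Q}^{-1}B\uh)$ to recover $\vert \uh\vert_{\mathsf{b}}^2$ --- together with the Cauchy--Schwarz/Young absorption steps and the norm bounds $\Vert \w_h\Vert_{\bV_h}\le \vert \ph\vert_{\bm Q_h}$ and $\Vert \bar{Q}^{-1}B\uh\Vert_{\bm Q_h}=\vert \uh\vert_{\mathsf{b}}$ are exactly the mechanism behind the cited proof, and the resulting constants depend only on $\underline{C}_{\mathsf{a}}$, $\underline{\beta}$ and the continuity constant of $\mathsf{a}_h$, as required.
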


\begin{proof}
The proof of this result is given in~\cite{hong2021newframework}.
\end{proof}



Furthermore, the following Lemmas, the proofs of which can be found in~\cite{kraus2020uniformly}, are essential for showing 
the well-posedness of~\eqref{eqn:MPET-disc-3-field-HDG}.

\begin{lemma}\label{lemma_inf_sup}
There holds the following discrete inf-sup condition
\begin{align}
\inf_{(q_{1,h},\ldots,q_{n,h}) \in \bP_h}\sup_{(\bv_h,\hat{\bv}_h)\in \overline{\bm U}_h}
\frac{(\divv \bv_h,\sum_{i=1}^n q_{i,h})}{\Vert (\bv_h,\hat{\bv}_h)\Vert_{\bHDG}\Vert \sum_{i=1}^n q_{i,h}\Vert_0} 
\ge {\beta}_{S,d}>0,
  \label{hdg_inf_sup_stokes}
\end{align}
where ${\beta}_{S,d}$ is a constant independent of
all problem parameters.
\end{lemma}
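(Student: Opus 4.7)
The plan is to reduce this multi-network inf-sup condition to a single-scalar HDG Stokes inf-sup, and then to invoke a standard Fortin-type construction tailored to $\bm H(\divv)$-conforming HDG discretizations. The crucial observation is that the bilinear form in the numerator depends on $\bq_h$ only through the scalar sum $q_h := \sum_{i=1}^n q_{i,h}$, which lies in the common discrete pressure space $P_h := \{q \in L_0^2(\Omega) : q|_T \in \operatorname{P}_{\ell-1}(T)\}$ since all $P_{i,h}$ coincide. It therefore suffices to prove: for each $q_h \in P_h$ there exists $(\bv_h, \hat{\bv}_h) \in \overline{\bU}_h$ with $(\divv \bv_h, q_h) \gtrsim \Vert q_h\Vert_0 \Vert (\bv_h, \hat{\bv}_h)\Vert_{\bHDG}$.

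To construct such a test pair, I would first invoke the continuous Stokes inf-sup condition on $\Omega$ to obtain $\bv \in \bm H_0^1(\Omega)$ with $\divv \bv = q_h$ and $\Vert \bv\Vert_1 \lesssim \Vert q_h\Vert_0$. Next, I would apply the canonical BDM (or BDFM) interpolation $\Pi_h$ into $\bU_h$. The commuting diagram property
\begin{equation*}
\divv \Pi_h \bv = Q_h \divv \bv,
\end{equation*}
where $Q_h$ is the $L^2$-projection onto $P_h$, combined with the fact that $\divv \bv = q_h \in P_h$, yields $\divv \Pi_h \bv = q_h$ and hence $(\divv \Pi_h \bv, q_h) = \Vert q_h\Vert_0^2$. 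For the hybrid component I would set $\hat{\bv}_h$ to be the $L^2$-projection of the tangential trace of $\bv$ onto $\widehat{\bU}_h$; since $\bv \in \bm H_0^1(\Omega)$, this projection vanishes on $\partial\Omega$ as required.

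The main technical task is then to establish $\Vert(\Pi_h \bv, \hat{\bv}_h)\Vert_{\bHDG} \lesssim \Vert \bv\Vert_1$. The three contributions in \eqref{norm_HDG} are handled separately: the volume term $\Vert\eps(\Pi_h \bv)\Vert_{0,T}$ is controlled by the $H^1$-stability of the BDM interpolant; the inverse-type term $h^2|\Pi_h \bv|_{2,T}^2$ follows by an inverse inequality combined with the same $H^1$-stability estimate; and the jump penalty $h^{-1}\Vert(\hat{\bv}_h - \Pi_h \bv)_t\Vert_{0,\partial T}^2$ is bounded by inserting and subtracting $\bv_t$, applying the trace inequality $h^{-1}\Vert \bv - \Pi_h \bv\Vert_{0,\partial T}^2 \lesssim \Vert \bv\Vert_{1,T}^2 + h^2|\bv|_{2,T}^2$ together with the $L^2$-stability of the facet projection for the term involving $\hat{\bv}_h - \bv_t$.

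I expect the jump penalty term to be the principal obstacle, as it couples the volume interpolant with the separately chosen facet projection; the argument relies essentially on the compatibility between the normal trace of $\Pi_h \bv$ and the scalar pressure space, and on the tangential trace approximation encoded in $\widehat{\bU}_h$. Since this construction is precisely the HDG Fortin operator developed for Biot's model in~\cite{kraus2020uniformly} (and ultimately traces back to the divergence-free HDG machinery of~\cite{LehrenfeldSchoeberl2016high,LedererEtAl2018hybrid}), the result transfers verbatim to the present setting, with $\beta_{S,d}$ independent of $h$, $\ell$, and all material parameters.
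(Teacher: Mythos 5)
Your argument is correct and is essentially the proof behind the paper's citation: the paper itself defers Lemma~\ref{lemma_inf_sup} to~\cite{kraus2020uniformly}, where exactly this reduction to the scalar sum $\sum_{i}q_{i,h}$ followed by the continuous Stokes inf-sup condition, the commuting BDM/BDFM interpolant, and the facet $L^2$-projection is used to build the test pair and bound its $\bHDG$-norm. The only caveat is that independence of the polynomial order $\ell$ is more than the lemma claims (it asserts independence of the problem parameters only), so you need not establish that stronger statement.
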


\begin{lemma}\label{lemma_HDG-HM}
There hold the following discrete inf-sup conditions
\begin{equation}\label{hdg-hm_inf_sup_darcy}
\inf_{(q_{i,h},\hat{q}_{i,h}) \in \overline{P}_{i,h}}\sup_{\bz_{i,h} \in\bW_{i,h}^-}
\frac{b(\bz_{i,h},(q_{i,h},\hat{q}_{i,h}))}{\Vert \bz_{i,h}\Vert_{0}\Vert (q_{i,h}, \hat q_{i,h})\Vert_{\HDG}}
  \ge {\beta}_{D,d}>0, \qquad i=1,\ldots,n,
 \end{equation} 
where ${\beta}_{D,d}$ is a constant independent
of all problem parameters.
\end{lemma}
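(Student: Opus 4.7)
The plan is to establish this inf-sup condition by a local Fortin-type construction. Since $\bW_{i,h}^-$ is a broken space with no inter-element continuity requirement, I can design the test function $\bz_{i,h}$ element-by-element using the canonical degrees of freedom of the local flux space $\bW_i(T) = \mathrm{RT}_{\ell-1}(T)$. The starting point is to integrate by parts in $b(\cdot,\cdot)$ element-wise, which yields
\begin{equation*}
b(\bz_{i,h},(q_{i,h},\hat q_{i,h})) = \sum_{T\in\mathcal{T}_h}\left[-\int_T \bz_{i,h}\cdot \nabla q_{i,h}\dx + \int_{\partial T}\bz_{i,h}\cdot\bn\,(q_{i,h}-\hat q_{i,h})\ds\right].
\end{equation*}
Thus the goal becomes to simultaneously control $\Vert \nabla q_{i,h}\Vert_{0,T}^2$ and $h^{-1}\Vert q_{i,h}-\hat q_{i,h}\Vert_{0,\partial T}^2$, which up to the inverse-estimate-controlled term $h^2|q_{i,h}|_{2,T}^2$ make up the $\HDG$ norm in \eqref{hdg_p_norm}.

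Next I would split $\bz_{i,h}|_T = \bz_{i,h}^{(1)} + \delta\,\bz_{i,h}^{(2)}$ with a small parameter $\delta>0$ to be chosen. For $\bz_{i,h}^{(1)}$, set all face degrees of freedom to zero (so $\bz_{i,h}^{(1)}\cdot\bn = 0$ on $\partial T$) and fix the interior moments so that $\int_T \bz_{i,h}^{(1)}\cdot\bpsi\dx = -\int_T \nabla q_{i,h}\cdot\bpsi\dx$ for every $\bpsi\in P_{\ell-2}(T)^d$. Because $\nabla q_{i,h}\in P_{\ell-2}(T)^d$ lies in the test space of the interior RT moments, testing with $\bpsi=\nabla q_{i,h}$ gives exactly $-\int_T \bz_{i,h}^{(1)}\cdot\nabla q_{i,h}\dx = \Vert\nabla q_{i,h}\Vert_{0,T}^2$, and a scaling/equivalence-of-norms argument on the reference element yields $\Vert\bz_{i,h}^{(1)}\Vert_{0,T}\lesssim\Vert \nabla q_{i,h}\Vert_{0,T}$. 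For $\bz_{i,h}^{(2)}$, set all interior DOFs to zero and choose the face DOFs so that $\bz_{i,h}^{(2)}\cdot\bn|_F = h^{-1}(q_{i,h}-\hat q_{i,h})|_F$ on each $F\subset\partial T$; this is well-defined because both sides lie in $P_{\ell-1}(F)$, which is exactly the normal-trace space of $\mathrm{RT}_{\ell-1}$ matching $\widehat P_{i,h}$. Standard trace–scaling then gives $\Vert\bz_{i,h}^{(2)}\Vert_{0,T}^2\lesssim h^{-1}\Vert q_{i,h}-\hat q_{i,h}\Vert_{0,\partial T}^2$, while by construction the boundary integral produces $h^{-1}\Vert q_{i,h}-\hat q_{i,h}\Vert_{0,\partial T}^2$.

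Summing over $T$ and estimating the mixed contributions (the cross term $-\int_T \bz_{i,h}^{(2)}\cdot\nabla q_{i,h}\dx$ and the boundary term involving $\bz_{i,h}^{(1)}$ which vanishes) by Cauchy--Schwarz and Young's inequality, I obtain
\begin{equation*}
b(\bz_{i,h},(q_{i,h},\hat q_{i,h})) \geq \tfrac{1}{2}\Vert\nabla q_{i,h}\Vert_0^2 + \tfrac{\delta}{2}h^{-1}\sum_{T}\Vert q_{i,h}-\hat q_{i,h}\Vert_{0,\partial T}^2
\end{equation*}
for $\delta$ chosen small enough (independent of $h$ and of all physical parameters). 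The inverse estimate $h^2|q_{i,h}|_{2,T}^2\lesssim\Vert\nabla q_{i,h}\Vert_{0,T}^2$ absorbs the remaining piece of $\Vert(q_{i,h},\hat q_{i,h})\Vert_{\HDG}^2$, and combining with $\Vert\bz_{i,h}\Vert_0^2\lesssim\Vert(q_{i,h},\hat q_{i,h})\Vert_{\HDG}^2$ yields the claimed bound with a parameter-independent constant $\beta_{D,d}$.

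The main obstacle will be the local construction of $\bz_{i,h}^{(1)}$: one must verify that setting face DOFs to zero leaves enough interior DOFs of $\mathrm{RT}_{\ell-1}(T)$ to reproduce arbitrary moments against $P_{\ell-2}(T)^d$ with the correct scaling, i.e.\ that the canonical DOFs form a unisolvent system and the resulting basis has $L^2$-norms behaving like $h^{d/2}$. This is standard on the reference element and transferred via Piola maps, but it is where the independence on polynomial order, mesh size, and shape regularity must be traced carefully. Everything else reduces to bookkeeping with Young's inequality and standard inverse and trace estimates.
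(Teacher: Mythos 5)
Your proposal is correct, and it is essentially the argument behind this lemma: the paper itself does not prove it but defers to \cite{kraus2020uniformly}, where the inf-sup condition is established by exactly this kind of local construction -- prescribing the normal-trace degrees of freedom of $\mathrm{RT}_{\ell-1}(T)$ as $h^{-1}(q_{i,h}-\hat q_{i,h})$ on each facet and the interior moments to reproduce $-\nabla q_{i,h}$, followed by element-wise integration by parts, scaling, and inverse estimates. The only point worth noting is that the cross term requires a weighted Young inequality (choosing $\delta \sim 1/C$ rather than merely ``small''), and that the resulting constant may depend on $\ell$ and shape regularity, which is consistent with the lemma since it only claims independence of the \emph{problem} parameters.
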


For our problem at hand, we identify the small bilinear forms introduced in \eqref{saddle-point_form} by
\begin{subequations}\label{combined_forms}
    \begin{align}
        \mathsf{a}_h(\uh,\vh)  & := a_h(({\bu}_h,\hat{\bu}_h),({\bv}_h,\hat{\bv}_h)) 
        + \sum_{i=1}^n (R_i^{-1}\bw_{i,h},\bz_{i,h}),
        \label{a_form}
        \\
        \mathsf{b}_h(\vh,\ph) & := - (\bm 1\cdot \bp_h,\ddiv \bv_h)  
        - \sum_{i=1}^n b( \bz_{i,h},(p_{i,h},\hat{p}_{i,h}))  \label{b_form},
        \\
        \mathsf{c}(\ph,\qh) & := \sum_{i=1}^n (\bm \zeta_{i}\cdot \bp_h,q_{i,h}) = (\Lambda_{\zeta} \bp_h, \bq_h),
        \label{c_form}
    \end{align}
\end{subequations}
and the linear form by
\begin{align}\label{relation_lin_F}
    \mathsf{F}_h((\vh,\qh))
    =(\bm{f},\bv_h) + \sum_{i=1}^{n} (g_i,q_{i,h}).
\end{align}
Next we specify the fitted norms in which problem~\eqref{eqn:MPET-disc-3-field-HDG} will be analyzed. 
To this end, consider  
\begin{align}
    \vert \qh \vert_{\bm Q_h}^2 &:= \sum_{i=1}^{n} R_{i} \Vert (q_{i, h}, \hat{q}_{i, h}) \Vert^2_{\HDG} + \frac{1}{\lambda_{0}} \Vert \sum_{i=1}^{n} q_{i, h} \Vert_0^{2}, \quad \forall \qh=(\bq_h, \hat{\bq}_h) \in \bm Q_h, 
    \label{Q_seminorm}
    \\ 
    \vert \vh\vert_{\bV_h}^2 &:= a_h^{\text{HDG}}((\bv_h, \hat{\bv}_h), (\bv_h, \hat{\bv}_h)) + \lambda \Vert \divv \bv_h \Vert_0^2 + \sum_{i=1}^{n} R_{i}^{-1} \Vert \bz_{i, h} \Vert_0^2 \label{V_seminorm} \\ 
    & 
    =  \mathsf{a}_h(\vh,\vh), \quad \forall \vh=((\bv_h,\hat{\bv}_h), \bz_h) \in \bV_h, \nonumber
\end{align}
which are to be used together with~\eqref{combined_forms} in the definitions~\eqref{Q_norm} and \eqref{V_norm}. 

\begin{definition}
    We call the problem~\eqref{saddle-point} uniformly well-posed under the norm $\Vert \cdot \Vert_{\bm Y_h}$ 
    defined by~\eqref{y_norm} if the conditions of Theorem~\ref{our_theorem} are satisfied and the constants $\overline{C}$ and $\underline{\alpha}$ in \eqref{boundedness_A} and \eqref{inf_sup_A} are independent of all problem parameters.
\end{definition}

\begin{theorem} 
Problem~\eqref{eqn:MPET-disc-3-field-HDG} is uniformly well-posed under the norm~\eqref{y_norm} where 
$\Vert \cdot \Vert_{\bm Q_h}$ and $\Vert \cdot \Vert_{\bV_h}$ are based on~\eqref{Q_seminorm} and \eqref{V_seminorm}, respectively.
\end{theorem}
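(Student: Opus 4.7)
My strategy is to invoke Theorem~\ref{our_theorem} with the identifications \eqref{combined_forms} and the fitted seminorms \eqref{Q_seminorm}--\eqref{V_seminorm}, so that the combined norm $\Vert \cdot \Vert_{\bm Y_h}$ defined in \eqref{y_norm} governs the problem uniformly in all parameters. I first dispose of the structural hypotheses. Symmetry of $\mathsf{c}$ and of $\mathsf{a}_h$ is manifest (using $\xi_{ij}=\xi_{ji}$). Positive semidefiniteness of $\mathsf{c}$ follows because the scaled transfer matrix $\Lambda_\zeta$ is diagonally dominant with nonnegative diagonal, hence SPSD. Positive semidefiniteness and continuity of $\mathsf{a}_h$ reduce, after a Cauchy--Schwarz step for the terms $(R_i^{-1}\bw_{i,h},\bz_{i,h})$, to the corresponding properties of $a_h^{\HDG}$ on $\Vert \cdot \Vert_{\bHDG}$; this is a standard HDG argument for the stabilization parameter $\eta$ chosen sufficiently large, as in \cite{LehrenfeldSchoeberl2016high,LedererEtAl2018hybrid}. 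The coercivity estimate \eqref{coerc_a} is then built into the very definition of $|\vh|_{\bV_h}^2$, which equals $\mathsf{a}_h(\vh,\vh)$ (cf.~\eqref{V_seminorm}), so that $\underline{C}_{\mathsf{a}}=1$ up to the HDG coercivity constant.

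The heart of the proof is the combined inf-sup condition \eqref{inf_sup_b} with a constant $\underline{\beta}$ independent of $\lambda$, of the $R_i$, and of $\Lambda_\zeta$. Given arbitrary $\qh = (\bq_h,\hat{\bq}_h) \in \bm Q_h$, I plan to construct a single Fortin-type candidate $\vh^\star = ((\bv_h^\star,\hat{\bv}_h^\star),\bz_h^\star) \in \bV_h$ satisfying $\mathsf{b}_h(\vh^\star,\qh) \gtrsim |\qh|^2_{\bm Q_h}$ and $\Vert \vh^\star\Vert_{\bV_h} \lesssim |\qh|_{\bm Q_h}$, following the blueprint of \cite{kraus2020uniformly} and superposing two building blocks tailored to the two summands of $|\qh|_{\bm Q_h}^2$ in \eqref{Q_seminorm}. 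The first block comes from Lemma~\ref{lemma_HDG-HM}: for each $i$ it yields $\bz_{i,h}^{(1)}\in \bW_{i,h}^-$ with $-b(\bz_{i,h}^{(1)},(q_{i,h},\hat{q}_{i,h})) \ge \beta_{D,d}\,\Vert (q_{i,h},\hat{q}_{i,h})\Vert_{\HDG}^2$ and $\Vert \bz_{i,h}^{(1)}\Vert_0 \lesssim \Vert (q_{i,h},\hat{q}_{i,h})\Vert_{\HDG}$; rescaling by $R_i$ reproduces the $R_i$-weight both in the hybrid Darcy contribution to $\mathsf{b}_h$ and in the $\Vert \cdot \Vert_{\bm W^-}$ part of $\Vert \vh^\star \Vert_{\bV_h}$. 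The second block comes from Lemma~\ref{lemma_inf_sup}: it yields $(\bv_h^{(2)},\hat{\bv}_h^{(2)}) \in \overline{\bU}_h$ with $-(\divv \bv_h^{(2)},\sum_i q_{i,h}) \ge \beta_{S,d}\,\Vert \sum_i q_{i,h}\Vert_0^2$ and $\Vert (\bv_h^{(2)},\hat{\bv}_h^{(2)})\Vert_{\bHDG} \lesssim \Vert \sum_i q_{i,h}\Vert_0$; rescaling by $\lambda_0^{-1}$ produces the $\tfrac{1}{\lambda_0}\Vert \sum_i q_{i,h}\Vert_0^2$ contribution, and because $\lambda/\lambda_0 \le 1$ the grad-div term $\lambda\Vert \divv \bv_h^{(2)}\Vert_0^2$ in $|\cdot|_{\bV_h}$ remains under control.

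Setting $\vh^\star$ as the appropriately scaled sum of these two blocks, the diagonal contributions reproduce $|\qh|_{\bm Q_h}^2$ up to an absolute constant, while the cross-terms coupling the displacement block to the flux block can be absorbed by Young's inequality once the two blocks are rescaled by a sufficiently small absolute weight. Summing over $i$ and passing to the supremum over $\vh$ then yields \eqref{inf_sup_b}, and Theorem~\ref{our_theorem} finally delivers boundedness \eqref{boundedness_A} and inf-sup stability \eqref{inf_sup_A} with $\overline{C}$ and $\underline{\alpha}$ independent of all problem parameters, which is precisely uniform well-posedness.

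The main obstacle I anticipate is exactly the weight tuning in the Fortin construction: the rescalings by $R_i$ and $\lambda_0^{-1}$ must be arranged so that every cross-term between the two blocks is either identically zero or absorbable regardless of the relative sizes of $\lambda$, $R_i^{-1}$ and $\zeta_{ij}$. This is where the specific definitions $\lambda_0 = \max\{1,\lambda\}$ and $\Lambda = \Lambda_\zeta + \lambda_0^{-1}\mathbb{I}$ in \eqref{eq::lambdamat} earn their keep, ensuring that the $\tfrac{1}{\lambda_0}\Vert \sum_i q_{i,h}\Vert_0^2$ penalty in the pressure norm is compatible with both the nearly-incompressible ($\lambda \to \infty$) and the vanishing-coupling ($\Lambda_\zeta \to 0$) limits of the model.
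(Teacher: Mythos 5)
Your proposal follows essentially the same route as the paper: coercivity \eqref{coerc_a} is immediate with $\underline{C}_{\mathsf{a}}=1$ since $\vert \vh\vert_{\bV_h}^2=\mathsf{a}_h(\vh,\vh)$ by construction, and the inf-sup condition \eqref{inf_sup_b} is obtained by combining the candidates from Lemma~\ref{lemma_inf_sup} (scaled by $\lambda_0^{-1}$) and Lemma~\ref{lemma_HDG-HM} (scaled by $R_i$) into one test function, exactly as in the paper's proof. The only remark is that the cross-term absorption you anticipate as the main obstacle does not actually arise: $\mathsf{b}_h(\vh,\qh)$ in \eqref{b_form} is additive over the displacement and flux components of $\vh$, so the two building blocks contribute to disjoint summands and the paper's choice makes $\mathsf{b}_h(\vh_{,0},\qh)=\vert\qh\vert_{\bm Q_h}^2$ exactly; the only remaining work is bounding $\Vert \vh_{,0}\Vert_{\bV_h}$, including its $\vert\cdot\vert_{\mathsf{b}}$ part, which the paper handles via the continuity estimate \eqref{eq:est3}.
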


\begin{proof}

Obviously, the coercivity estimate~\eqref{coerc_a} is fulfilled with $\underline{C}_{\mathsf{a}}=1$.
We next show that~\eqref{inf_sup_b} holds.
From the inf-sup conditions \eqref{hdg_inf_sup_stokes} and \eqref{hdg-hm_inf_sup_darcy}, we get that for $ (\bq_h, \hat{\bq}_h) \in \bm Q_h $ there exists $ {\vh}_{,0} = ((\bv_{h,0},\hat{\bv}_{h,0}),\bz_{h,0})\in \bV_h$ such that
\begin{align}\label{eq:est_stokes}
    -\divv \bv_{h,0}=\frac{1}{\lambda_0} \sum_{i=1}^{n} q_{i, h} \quad \text{and}\quad \Vert (\bv_{h,0},\hat{\bv}_{h,0})\Vert^2_{\bHDG}
    \le \beta_{S,d}^{-2}\frac{1}{\lambda_0^2}\Vert \sum_{i=1}^{n} q_{i, h}\Vert_0^2,
\end{align}
and
\begin{align}\label{eq:est_darcy}
    -b(\bz_{i, h,0},(q_{i, h},\hat{q}_{i, h}))=R_{i}\Vert (q_{i, h},\hat{q}_{i, h}) \Vert_{\HDG}^2 \quad \text{and} \quad 
    \Vert \bz_{i, h,0}\Vert_0^2 \le \beta_{D,d}^{-2} R_i^2 \Vert (q_{i, h},\hat{q}_{i, h})\Vert^2_{\HDG}.
\end{align}
Now we can infer
\begin{align}\label{eq:est1}
    \beta_{S,d}^{-2}\frac{1}{\lambda_{0}}\Vert \sum_{i=1}^{n} q_{i, h} \Vert_0^2  & \ge \frac{1}{2}\lambda_{0}\left[\Vert (\bv_{h,0},\hat{\bv}_{h,0})\Vert_{\bHDG}^2 
    + \Vert (\bv_{h,0},\hat{\bv}_{h,0})\Vert_{\bHDG}^2
    \right]\\ \nonumber
    & \ge \frac{1}{2\overline{C}_a} \lambda_{0} a_h^{\text{HDG}}((\bv_{h,0},\hat{\bv}_{h,0}),(\bv_{h,0},\hat{\bv}_{h,0}))
    +\frac{1}{2} \lambda_{0} \Vert \divv \bv_{h,0} \Vert_0^2 \\ \nonumber
    & \ge \frac{1}{2C}\left[ a_h^{\text{HDG}}((\bv_{h,0},\hat{\bv}_{h,0}),(\bv_{h,0},\hat{\bv}_{h,0})) + 2 \lambda_{0} \Vert \divv \bv_{h,0}\Vert_0^2\right],
\end{align}
where $C=\max\{2,\overline{C}_a\}$ and $\overline{C}_a$ is the boundedness constant for $a_h^{\text{HDG}}((\cdot,\cdot),(\cdot,\cdot))$.
Futhermore, we get
\begin{align*}
    \beta_{D, d}^{-2} R_{i} \Vert (q_{i,h},\hat{q}_{i,h}) \Vert^2_{\HDG}
    \ge R_{i}^{-1} \Vert \bz_{i,h,0}\Vert^2_0,
\end{align*}
which implies
\begin{align}\label{eq:est2}
    \beta_{D, d}^{-2} \sum_{i=1}^{n} R_{i} \Vert (q_{i,h},\hat{q}_{i,h}) \Vert^2_{\HDG}
    \ge \sum_{i=1}^{n} R_{i}^{-1} \Vert \bz_{i,h,0}\Vert^2_0.
\end{align}
Using the Cauchy-Schwarz inequality, we obtain
\begin{align*}
    \sum_{i=1}^n b( \bz_{i,h},(q_{i,h},\hat{q}_{i,h}))
    & \leq \sum_{i=1}^{n} C_{b} R_{i}^{1/2} \Vert (q_{i, h},\hat{q}_{i, h})\Vert_{\HDG} R_{i}^{-1/2} \Vert \bz_{i, h} \Vert_0 \\
    & \leq C_{b} \left(\sum_{i=1}^{n} R_{i} \Vert (q_{i, h},\hat{q}_{i, h})\Vert_{\HDG}^{2}\right)^{1/2} \left(\sum_{i=1}^{n} R_{i}^{-1} \Vert \bz_{i, h} \Vert_0^{2}\right)^{1/2} \\
    & \leq C_b \Vert\qh\Vert_{\bm Q_h} \left(\sum_{i=1}^{n} R_{i}^{-1} \Vert \bz_{i, h} \Vert_0^{2}\right)^{1/2},
\end{align*}
where $C_b$ is the boundedness constant of $ b(\cdot, (\cdot, \cdot)) $.
With this inequality, we conclude
\begin{align}\label{eq:est3}
    \begin{aligned}
        \vert \vh \vert^2_{\mathsf{b}}
        & = \Vert B \vh \Vert_{\bm Q_h'}^2
        = \left( \sup_{\substack{\qh \in \bm Q_h \\ \qh \neq 0}} \frac{\mathsf{b}_h(\vh, \qh)}{\Vert \qh \Vert_{\bm Q_h}} \right)^{2} \\
        & = \left( \sup_{\substack{\qh \in \bm Q_h \\ \qh \neq 0}} \frac{- (\sum_{i=1}^{n} q_{i, h},\ddiv \bv_h) - \sum_{i=1}^n b( \bz_{i,h},(q_{i,h},\hat{q}_{i,h}))}{ \Vert \qh \Vert_{\bm Q_h}}  \right)^2  \\
        & \le \left( \sup_{\substack{\qh \in \bm Q_h \\ \qh \neq 0}} \frac{\Vert \sum_{i=1}^{n} q_{i, h} \Vert_0 \Vert \divv \bv_h\Vert_0+ C_b \Vert\qh\Vert_{\bm Q_h} (\sum_{i=1}^{n} R_{i}^{-1} \Vert \bz_{i, h} \Vert_0^{2})^{1/2} }{ \Vert \qh \Vert_{\bm Q_h}}  \right)^2 \\
        & \le \left( \sup_{\substack{\qh \in \bm Q_h \\ \qh \neq 0}} \frac{ \overline{C}_b\sqrt{\lambda_{0}}\Vert \qh\Vert_{\bm Q_h} \Vert \divv \bv_h\Vert_0+
        \overline{C}_b \Vert\qh\Vert_{\bm Q_h} (\sum_{i=1}^{n} R_{i}^{-1} \Vert \bz_{i, h} \Vert_0^{2})^{1/2} }{ \Vert \qh \Vert_{\bm Q_h}}  \right)^2  \\
        &\le 2\overline{C}_b^2 \left(\lambda_{0} \Vert \divv \bv_h \Vert_0^2 + \sum_{i=1}^{n} R_{i}^{-1} \Vert \bz_{i, h} \Vert_0^{2} \right),
    \end{aligned}
\end{align}
where $ \overline{C}_{b} = \max\{1, C_{b}\} $.

Utilizing \eqref{eq:est1} -- \eqref{eq:est3} together with the choice $\beta_d:=\min\left\{\beta_{D,d},\frac{\beta_{S,d}}{\sqrt{2 C}}\right\}$, we receive the estimate
\begin{align}
    \beta_d^{-2} \vert \qh \vert_{\bm Q_h}^2
    & = \beta_d^{-2} \left(\sum_{i=1}^{n} R_{i} \Vert (q_{i, h}, \hat{q}_{i, h}) \Vert^2_{\HDG} + \frac{1}{\lambda_{0}} \Vert \sum_{i=1}^{n} q_{i, h} \Vert_0^{2}\right) \nonumber \\
    & \ge \sum_{i=1}^{n} R_{i}^{-1} \Vert \bz_{i,h,0}\Vert^2_0 + a_h^{\text{HDG}}((\bv_{h,0},\hat{\bv}_{h,0}),(\bv_{h,0},\hat{\bv}_{h,0}))+2 \lambda_{0} \Vert \divv \bv_{h,0} \Vert_0^2 \nonumber
    \\
    & \geq \frac{1}{2} \sum_{i=1}^{n} R_{i}^{-1} \Vert \bz_{i,h,0}\Vert^2_0 + a_h((\bv_{h,0},\hat{\bv}_{h,0}),(\bv_{h,0},\hat{\bv}_{h,0})) + 
    \lambda_{0} \Vert \divv \bv_{h,0} \Vert_0^2 \nonumber \\
    & \quad + \frac{1}{2} \sum_{i=1}^{n} R_{i}^{-1} \Vert \bz_{i,h,0}\Vert^2_0 \nonumber\\
    & \ge \frac{1}{2}\left( \mathsf{a}_h({\vh}_{,0},{\vh}_{,0}) + \lambda_{0} \Vert \divv \bv_{h,0} \Vert_0^2 
    + \sum_{i=1}^{n} R_{i}^{-1} \Vert \bz_{i,h,0}\Vert^2_0 \right) \nonumber \\
    & \ge \frac{1}{4\overline{C}_b^2}\left( \vert {\vh}_{,0} \vert_{\bV_h}^2 + \vert {\vh}_{,0} \vert_{\mathsf{b}}^2\right)
    = \frac{1}{4\overline{C}_b^2} \Vert {\vh}_{,0}\Vert_{\bV_h}^2. \label{eq:est4}
\end{align}
Finally, using \eqref{eq:est_stokes} and \eqref{eq:est_darcy} we deduce
\begin{align*}
    \sup_{\substack{\vh \in \bV_h \\ \vh \neq 0}} \frac{ \mathsf{b}_h(\vh,\qh)}{\Vert \vh\Vert_{\bV_h}} &\ge 
    \frac{\mathsf{b}_h({\vh}_{,0},\qh)}{\Vert {\vh}_{,0}\Vert_{\bV_h}} = 
    \frac{\vert \qh\vert^2_{\bm Q_h}}{\Vert {\vh}_{,0}\Vert_{\bV_h}} \ge 
    \frac{1}{2} \overline{C}_b^{-1} \beta_d \frac{\vert \qh\vert^2_{\bm Q_h}}{\vert \qh\vert_{\bm Q_h}}.
\end{align*}
\end{proof}

We now show uniform well-posedness of problem~\eqref{eqn:MPET-disc-3-field-HDG} in the $ \Vert \cdot \Vert_{\overline{\bm X}_h} $ norm. 
To this end, we prove that the norms $ \Vert \cdot \Vert_{\bm Y_h} $ and $ \Vert \cdot \Vert_{\overline{\bm X}_h} $ are equivalent with constants 
independent of all model and discretization parameters. The latter norm is then used in Section~\ref{sec::numerics} to set up parameter-robust preconditioners.

\begin{corollary}\label{cor:norm_equivalence}
    Problem \eqref{eqn:MPET-disc-3-field-HDG} is uniformly well-posed under the norm \eqref{product_norm}.
\end{corollary}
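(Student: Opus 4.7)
The plan is to deduce this corollary from the preceding theorem by establishing that the norms $\Vert \cdot \Vert_{\bm Y_h}$ and $\Vert \cdot \Vert_{\overline{\bm X}_h}$ are equivalent on $\overline{\bm U}_h \times \bm W_h^- \times \overline{\bm P}_h$ with constants independent of $\lambda$, $R_i$, $\zeta_{ij}$ and $h$. Once such an equivalence is available, the boundedness \eqref{boundedness_A} and inf-sup condition \eqref{inf_sup_A} transfer, up to multiplicative constants, to $\Vert \cdot \Vert_{\overline{\bm X}_h}$, which yields the claimed uniform well-posedness.

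First I would observe that the pressure components of the two norms in fact coincide exactly. Using $\Lambda = \Lambda_{\zeta} + \frac{1}{\lambda_0}\mathbb{I}$ from \eqref{eq::lambdamat} together with the elementary identity $(\mathbb{I}\bq_h, \bq_h) = \Vert \sum_{i=1}^n q_{i,h}\Vert_0^2$, the definitions \eqref{Q_seminorm}, \eqref{c_form} and \eqref{hdg_p_norm_scaled} immediately give $\Vert \qh\Vert_{\bm Q_h}^2 = \vert \qh\vert_{\bm Q_h}^2 + \mathsf{c}(\qh,\qh) = \Vert (\bq_h,\hat{\bq}_h)\Vert_{\overline{\bP}_h}^2$. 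No further work is needed for the $\bm Q_h$ component.

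It remains to compare $\Vert \vh\Vert_{\bV_h}^2 = \vert \vh\vert_{\bV_h}^2 + \vert \vh\vert_{\mathsf{b}}^2$ with $\Vert (\bv_h,\hat{\bv}_h)\Vert_{\overline{\bm U}_h}^2 + \Vert \bz_h\Vert_{\bm W^-}^2$. One direction is routine: by \eqref{V_seminorm} and the standard coercivity and boundedness of $a_h^{\text{HDG}}$ with respect to $\Vert \cdot \Vert_{\bHDG}$, we have $\vert \vh\vert_{\bV_h}^2 = \mathsf{a}_h(\vh,\vh) \eqsim \Vert (\bv_h,\hat{\bv}_h)\Vert_{\overline{\bm U}_h}^2 + \Vert \bz_h\Vert_{\bm W^-}^2$, and discarding $\vert \vh\vert_{\mathsf{b}}^2 \ge 0$ immediately gives the lower bound of $\Vert \vh\Vert_{\bV_h}^2$ by the $\overline{\bm X}_h$ quantity. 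The main obstacle is the reverse direction, specifically the control of $\vert \vh\vert_{\mathsf{b}}^2$. Here I would recycle the estimate \eqref{eq:est3} from the previous proof, which already provides $\vert \vh\vert_{\mathsf{b}}^2 \le 2\overline{C}_b^2\bigl(\lambda_0 \Vert \divv \bv_h\Vert_0^2 + \sum_{i=1}^n R_i^{-1}\Vert \bz_{i,h}\Vert_0^2\bigr)$. The nuisance is the factor $\lambda_0 = \max\{1,\lambda\}$, which for $\lambda < 1$ is strictly larger than the $\lambda$ weighting $\Vert \divv \bv_h\Vert_0^2$ inside $\Vert \cdot \Vert_{\overline{\bm U}_h}$, and it must be absorbed without introducing a $\lambda$-dependent constant. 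The remedy is the pointwise bound $(\divv \bv_h)^2 = (\mathrm{tr}\,\eps(\bv_h))^2 \le d\,\vert \eps(\bv_h)\vert_F^2$, whose elementwise integration yields $\Vert \divv \bv_h\Vert_0^2 \le d\,\Vert (\bv_h,\hat{\bv}_h)\Vert_{\bHDG}^2$; combining this with $\lambda_0 \le 1+\lambda$ then gives $\lambda_0 \Vert \divv \bv_h\Vert_0^2 \le C\,\Vert (\bv_h,\hat{\bv}_h)\Vert_{\overline{\bm U}_h}^2$. Together with the upper bound on $\vert \vh\vert_{\bV_h}^2$ from the HDG boundedness, this produces the reverse inequality and completes the norm equivalence, hence the proof of the corollary.
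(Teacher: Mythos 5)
Your proposal is correct and follows essentially the same route as the paper: exact identification of the pressure norms via $\Lambda=\Lambda_{\zeta}+\frac{1}{\lambda_0}\mathbb{I}$, equivalence of $\vert\cdot\vert_{\bV_h}^2=\mathsf{a}_h(\cdot,\cdot)$ with the $\overline{\bm U}_h$/$\bm W^-$ part via coercivity and boundedness of $a_h^{\text{HDG}}$, and control of $\vert\cdot\vert_{\mathsf{b}}^2$ by recycling \eqref{eq:est3}. Your explicit absorption of the $\lambda_0$ factor through $\Vert\divv\bv_h\Vert_0^2\le d\,\Vert(\bv_h,\hat{\bv}_h)\Vert_{\bHDG}^2$ is exactly the step the paper performs (somewhat tacitly) when it replaces $\lambda_0\Vert\divv\bv_h\Vert_0^2$ by $\Vert(\bv_h,\hat{\bv}_h)\Vert_{\bHDG}^2+\lambda\Vert\divv\bv_h\Vert_0^2$.
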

\begin{proof}
    We proof this by showing norm equivalence of the norms $ \Vert
    \cdot \Vert_{\bm Y_h} $ and $ \Vert \cdot \Vert_{\overline{\bm
    X}_h} $ with parameter-independent constants. For the first
    estimate, we exploit \eqref{eq:est3} in order to get
    \begin{align*}
        \Vert (\vh, \qh) \Vert_{\bm Y_h}^{2} & = \vert \qh \vert_{\bm Q_h}^2 + \mathsf{c}(\qh, \qh) + \vert \vh \vert_{\bV_h}^2 + \vert \vh \vert_{\mathsf{b}}^2 \\
        & \leq \sum_{i=1}^{n} R_{i} \Vert (q_{i, h}, \hat{q}_{i, h}) \Vert^2_{\HDG} + \frac{1}{\lambda_{0}} \Vert \sum_{i=1}^{n} q_{i, h} \Vert_0^{2} + (\Lambda_{\zeta} \bq_h, \bq_h) \\
        & \quad +  a_h^{\text{HDG}}((\bv_h, \hat{\bv}_h), (\bv_h, \hat{\bv}_h)) + \lambda \Vert \divv \bv_h \Vert_0^2 + \sum_{i=1}^{n} R_{i}^{-1} \Vert \bz_{i, h} \Vert_0^2 \\
        & \quad + 2 \overline{C}_b^2 \left(\lambda_{0} \Vert \divv \bv_h \Vert_0^2 + \sum_{i=1}^{n} R_{i}^{-1} \Vert \bz_{i, h} \Vert_0^{2} \right) \\
        & \leq \Vert (\bq_h, \hat{\bq}_h) \Vert_{\overline{\bP}_h}^2 + \overline{C}_{a} \Vert (\bv_h, \hat{\bv}_h) \Vert_{\bHDG}^2 + \lambda \Vert \divv \bv_h \Vert_{0}^2 + \Vert \bz_h \Vert_{\bW^-}^2 \\
        & \quad + 2\overline{C}_b^2 \left( \Vert (\bv_h, \hat{\bv}_h) \Vert_{\bHDG}^2 + \lambda \Vert \divv \bv_h \Vert_0^2 + \Vert \bz_h \Vert_{\bW^-}^2 \right) \\
        & \leq C (\Vert (\bq_h, \hat{\bq}_h) \Vert_{\overline{\bP}_h}^{2} + \Vert (\bv_h, \hat{\bv}_h) \Vert_{\overline{\bU}_h}^2 + \Vert \bz_h \Vert_{\bW^-}^2) \\
        & = C\Vert ((\bv_h, \hat{\bv}_h), \bz_h, (\bq_h, \hat{\bq}_h)) \Vert_{\overline{\bm X}_h}^2,
    \end{align*}
    where $ C = \max\{1, \overline{C}_a, 2 \overline{C}_b^2\} $. The second estimate follows directly by the definitions of the norms, which yields
    \begin{align*}
        \Vert (\vh, \qh) \Vert_{\bm Y_h}^{2}
        & \geq \sum_{i=1}^{n} R_{i} \Vert (q_{i, h}, \hat{q}_{i, h}) \Vert^2_{\HDG} + \frac{1}{\lambda_{0}} \Vert \sum_{i=1}^{n} q_{i, h} \Vert_0^{2} + (\Lambda_{\zeta} \bq_h, \bq_h) \\
        & \quad + \underline{C}_c \Vert (\bv_h, \hat{\bv}_h) \Vert_{\bHDG}^2 + \lambda \Vert \divv \bv_h \Vert_0^2 + \sum_{i=1}^{n} R_{i}^{-1} \Vert \bz_{i, h} \Vert_0^2 + \vert \vh \vert_{\mathsf{b}}^2 \\
        & \geq C (\Vert (\bv_h, \hat{\bv}_h) \Vert_{\overline{\bU}_h}^2 + \Vert (\bq_h, \hat{\bq}_h) \Vert_{\overline{\bP}_h}^2 + \Vert \bz_h \Vert_{\bW^-}^2) \\
        & = C \Vert ((\bv_h, \hat{\bv}_h), \bz_h, (\bq_h, \hat{\bq}_h)) \Vert_{\overline{\bm X}_h}^2,
    \end{align*}
    where $C = \min\{1, \underline{C}_c\}$ and $\underline{C}_c $ is the coercivity constant of $a_h^{\text{HDG}}((\cdot, \cdot), (\cdot, \cdot))$.
\end{proof}

\begin{remark}
    In the case $ n= 1$, the definition of norm~\eqref{hdg_p_norm_scaled} reduces to the norm (18b) in \cite{kraus2020uniformly}.
\end{remark}



\section{Numerical examples} \label{sec::numerics}

In this section we demonstrate the applicability of our theoretical results with some
numerical experiments. For this, we first propose a norm-equivalent
preconditioner following similar steps as in
\cite{kraus2020uniformly}. The preconditioner is then extensively
tested with respect to the parameters in several test cases. Finally,
we perform some numerical simulations for a 4-network model of a human brain.
All numerical experiments are realized in the finite
element library Netgen/NGSolve, see \cite{netgen, ngsolve} and
\url{www.ngsolve.org}.

\subsection{Norm equivalent preconditioners}

In \cite{kraus2020uniformly} the authors already provided a detailed
construction of a parameter robust well-posedness theory and
preconditioner of the Biot problem, i.e. the case of the MPET problem
with $n=1$. In the following we give a brief overview of the
corresponding extension to the MPET problem, but refer to
\cite{kraus2020uniformly} for details regarding implementation aspects
and a discussion on using static condensation to eliminate local
(element-wise) unknowns from the system. To this end, let
$\uh=(({\bu}_h,\hat{\bu}_h), \bw_h)$, $\ph=(\bp_h,\hat{\bp}_h)$ be the
solution of \eqref{saddle-point}. Using the same notation for the
coefficient vectors as for the solutions we can rewrite
\eqref{saddle-point} as 
\begin{align} \label{eq::systemmatrix}
    \begin{pmatrix}
      \mA & \mB^\T\\\mB & -\mC
    \end{pmatrix}
    \begin{pmatrix}
        \uh \\ \ph\\
    \end{pmatrix} = 
      \mF,
  \end{align}
where $\mA, \mB, \mC$ and $\mF$ are the corresponding system matrices
and the right hand side vector defined by the bilinear forms
\eqref{combined_forms} and the linear form \eqref{relation_lin_F}.
According to Corollary~\ref{cor:norm_equivalence}, a norm equivalent
preconditioner, compare Section 4.2 in \cite{kraus2020uniformly}, is
then given by  
\begin{align}
    \mathcal{B} := 
    \begin{pmatrix}
      \mX_{\bu, \bw} & \mO \\\mO & \mX_{\bp},
    \end{pmatrix},
  \end{align}
  where $\mX_{\bu, \bw}$ and $\mX_{\bp}$ are the system matrices of
  the bilinear forms induced by the norms used in
  \eqref{product_norm}, i.e. $\Vert \cdot \Vert^2 _{\overline{\bm
  U}_h} + \Vert \cdot \Vert^2_{\bm W^{-}}$ and $\Vert \cdot
  \Vert^2_{\overline{\bP}_h}$, respectively. An alternative
  preconditioner can be proposed following the steps from Section 4.4
  in \cite{kraus2020uniformly}. Since the element-wise block diagonal
  structure of the part of the matrix $\mA$ that corresponds to the
  bilinear form $\sum_{i=1}^n (R_i^{-1}\bw_{i,h},\bz_{i,h})$, see
  second term in \eqref{a_form}, allows a very efficient inversion,
  one can eliminate the velocities $\bw_{i,h}$ from the system matrix.
  Then one solves a reduced problem using the stiffness matrix for the
  displacement (first term in \eqref{a_form}), the divergence
  constraints coupling the displacements and the pressures (first term
  in \eqref{b_form}), and the resulting pressure Schur complement with
  respect to the velocities. Using this technique, we
  introduce the second preconditioner by  
  \begin{align}
    \tilde {\mathcal{B}} := 
    \begin{pmatrix}
      \mX_{\bu} & \mO \\\mO & \tilde{\mX}_{\bp},
    \end{pmatrix}
  \end{align}
  where $\mX_{\bu}$ corresponds to the bilinear form induced by the
  norm $\Vert \cdot \Vert^2 _{\overline{\bm U}_h}$ and
  $\tilde{\mX}_{\bp}$ is the above mentioned resulting pressure Schur
  complement. A detailed construction of $\tilde{\mX}_{\bp}$ is given
  in equation~(63) in \cite{kraus2020uniformly}. Note that there holds
  the spectral equivalence $\mX_{\bp} \eqsim
  \tilde{\mX}_{\bp}$, i.e. both preconditioners ${\mathcal{B}}$ and
  $\tilde {\mathcal{B}}$ are well suited to solve the system with an
  iterative method. However, in accordance with the results from
  \cite{kraus2020uniformly}, the preconditioner $\tilde {\mathcal{B}}$
  performs better, see Section~\ref{sec::comparison}.

\subsection{Parameter-robustness} \label{sec::testproblem}
To study the numerical behavior of the norm equivalent preconditioners
$\mathcal{B}$ and $\tilde{\mathcal{B}}$ defined in the previous
section we consider problem \eqref{eqn:MPET-disc-3-field-HDG} in
the 2-network case on the spatial domain $\Omega = (0, 1)^3$ with
exact solution
\begin{align*}
    \bm u &:= (\varphi, \varphi, \varphi), &
    \bm p &:=
    \begin{pmatrix}
        x^2 (1 - x)^2 y^2 (1 - y)^2 z^2 (1 - z)^2 \\
        \sin(\pi x)^2 \sin(\pi y)^2 \sin(\pi z)^2
    \end{pmatrix}
    - \bm p_{0},
\end{align*}
where $\varphi = \sin(\pi x) \sin(\pi y)\sin(\pi z)$. The constant
vector $\bm p_0$ is chosen in such a way that $\bm p \in
L_{0}^{2}(\Omega)^2$. We solve the problem using a regular mesh
$\mathcal{T}_h$ with varying mesh size. For the interior penalty
stabilization parameter $\eta$ in \eqref{a_h_HDG} we choose the value
$\eta = 10$. All problems were solved by means of the preconditioned
minimal residual method (MinRes) with a fixed tolerance of $10^{-8}$.
The symmetric positive definite (SPD) preconditioner (in the following
section denoted by $\mathcal{B}$ and $\tilde{\mathcal{B}}$) is
inverted using a direct method. To simplify the notation we
introduce symbols for the (scaled) parameters for the pressures when
they coincide, i.e. we have $\alpha_{p} := \alpha_{p_1} =
\alpha_{p_2}$, $R := R_1 = R_2$. If the parameters differ from each
other we use the notation used so far. Further note that since we have
only two pressures we use the symbol $\xi := \xi_{12} = \xi_{21}$.

\subsubsection{Comparison of the preconditioners $\mathcal{B}$ and
$\tilde{\mathcal{B}}$}
\label{sec::comparison}
In this first example we want to compare the performance of the MinRes
solver using either $\mathcal{B}$ or $\tilde{\mathcal{B}}$ as a
preconditioner. To this end, we solve the problem of
Section~\ref{sec::testproblem} using the polynomial order $\ell=2$, a
triangulation with $|\mathcal{T}_h| = 384$ elements, and consider the
parameters $\alpha_{p} = 0, \xi = 0, \lambda = 1$. The value of $R$ is
set to $10^{-i}$ with $i = 0,2,4,6,8$. Note that the test case of
vanishing $R$ and zero values for $\xi, \alpha_p$ is the most
challenging one as observed in Section~\ref{sec::firsttest}. In
Figure~\ref{fig::history_tB} and Figure~\ref{fig::history_B} we plot
the history of the residual (res) over the number of iterations (it)
of the MinRes solver for both preconditioners. Several observations
can be made here. First of all note that the plots show a robustness
with respect to $R$ since there appears an upper bound of the
iterations for $R \rightarrow 0$. To discuss this in more detail we
note that (according to \cite{MR2086829}) the number of iterations of
the MinRes solver particularly depends on the spectral equivalences 
\begin{align} \label{eq::specequi}
    \mX_{\bp} \eqsim \mS_{\bp} \quad  \textrm{or} \quad \tilde{\mX}_{\bp} \eqsim \mS_{\bp},
\end{align} 
 when using $\mathcal{B}$ or $\tilde{\mathcal{B}}$, respectively. Here
$\mS_{\bp} := -\mB \mA^{-1} \mB^\T - \mC$ is the pressure Schur
complement (with respect to the displacement $\bu$). Since we have
chosen $\alpha_p = \xi = 0$, we see that in the limit $R \rightarrow
0$ the ``$\mC$-block'' in the system matrix \eqref{eq::systemmatrix} and
in $\mS_{\bp}$ vanishes. Further, according to definition~\eqref{hdg_p_norm_scaled} both matrices $\mX_{\bp}$ and
$\tilde{\mX}_{\bp}$ then converge to a matrix which is induced by the
bilinear form $(\Lambda \bp, \bq)$ which we denote by
$\mM_{\bp}^\Lambda$ in the following. 
As we have $\Lambda = \frac{1}{\lambda_0} \mathbb{I}$, see
\eqref{eq::lambdamat}, standard estimates (compare \cite{MR768638}) then give the
spectral equivalence, 
\begin{align} \label{eq::schur_spec}
    \mB \mA^{-1} \mB^\T \eqsim \mM_{\bp}^\Lambda.
\end{align}
 This shows that when $R \rightarrow 0$, the spectral
equivalences in \eqref{eq::specequi} are dominated by
\eqref{eq::schur_spec} (as $\mC$ vanishes), and thus the choice of the
elliptic discrete $H^1$-part of the pressure preconditioner given by
the HDG-norm, see \eqref{hdg_p_norm_scaled}, in $\mX_{\bp}$, or by the
elliptic operator resulting from the elimination of the local
velocities (see previous section) in $\tilde{\mX}_{\bp}$, is not
crucial anymore. Summarizing we see that the upper bound of the
iterations is independent of the choice of $\mathcal{B}$ and
$\tilde{\mathcal{B}}$ and robust in $R$. However, since the latter
version provides better results particularly also for moderate
numbers of $R$, we only consider $\tilde{\mathcal{B}}$ for the rest of
this work.

\begin{remark}
    Equations \eqref{eq::specequi} and \eqref{eq::schur_spec} show
    why it is essential that $R > 0$. As $\mM_{\bp}^\Lambda$ is
    not SPD the choice $R=0$ (and $\alpha_p = \xi = 0$) would then
    lead to non SPD preconditioners $\mX_{\bp}$ or
    $\tilde{\mX}_{\bp}$. In other words the norm in
    \eqref{hdg_p_norm_scaled} is only a semi norm for $R = \alpha_p =
    \xi = 0$.
\end{remark}

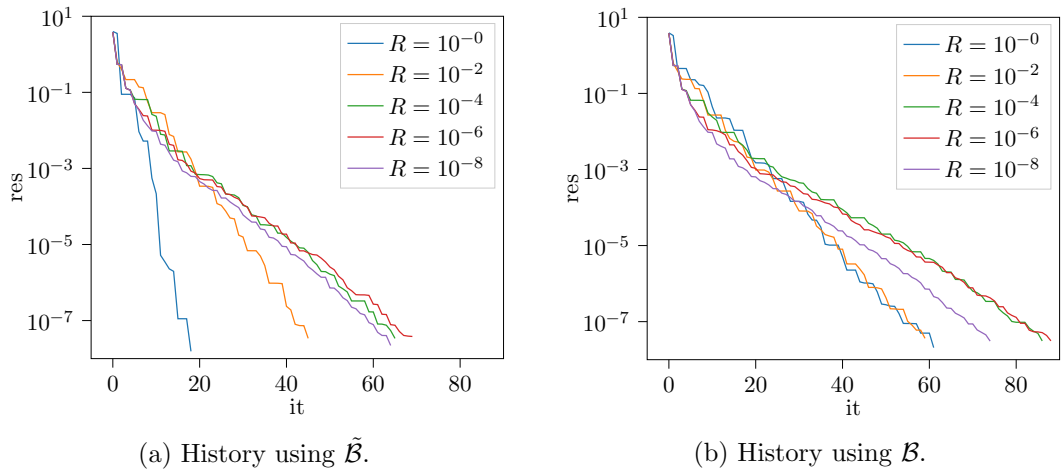
\begin{figure}
    \begin{subfigure}{0.4\textwidth}
        \centering
        \begin{tikzpicture}[scale = 0.8]
\definecolor{color0}{rgb}{0.12156862745098,0.466666666666667,0.705882352941177}
\definecolor{color1}{rgb}{1,0.498039215686275,0.0549019607843137}
\definecolor{color2}{rgb}{0.172549019607843,0.627450980392157,0.172549019607843}
\definecolor{color3}{rgb}{0.83921568627451,0.152941176470588,0.156862745098039}
\definecolor{color4}{rgb}{0.580392156862745,0.403921568627451,0.741176470588235}

\begin{axis}[
legend cell align={left},
legend style={fill opacity=0.8, draw opacity=1, text opacity=1, draw=white!80!black},
log basis y={10},
tick align=outside,
tick pos=left,
x grid style={white!69.0196078431373!black},
xlabel={it},
xmin=-5, xmax=90,
xtick style={color=black},
y grid style={white!69.0196078431373!black},
ylabel={res},
ymin=1e-08, ymax=10,
ymode=log,
ytick style={color=black}
]
\addplot [semithick, color0]
table {%
0 3.98036160125162
1 3.53823804436918
2 0.088862472417382
3 0.0888624481560273
4 0.0886055477353342
5 0.0717283114696207
6 0.0092900272144864
7 0.00523026953208344
8 0.00522850540878111
9 0.000557477130223696
10 0.000215525296250641
11 5.19999657915572e-06
12 3.48435687851475e-06
13 2.29036358532716e-06
14 1.96877596162519e-06
15 1.11786668049308e-07
16 1.11772342938915e-07
17 1.11650503419546e-07
18 1.55309119706391e-08
};
\addlegendentry{$R = 10^{-0}$}
\addplot [semithick, color1]
table {%
0 3.79169257141942
1 0.630774789658195
2 0.404430656763956
3 0.216347312502744
4 0.216320567820145
5 0.21626154925918
6 0.138233327947131
7 0.132250717547772
8 0.0569547903250029
9 0.0291917486916636
10 0.0291432037920707
11 0.0291414281465025
12 0.025996720518108
13 0.00777189462456833
14 0.00692133033792801
15 0.0027258376844912
16 0.00271565296183207
17 0.00271288596743602
18 0.00175530400386148
19 0.000844814362977791
20 0.00034083404763733
21 0.000340579679946123
22 0.000338909697732772
23 0.000316652213592297
24 0.000118966286989855
25 0.00010632860965975
26 7.55220688585642e-05
27 4.97258413783091e-05
28 4.71473470194448e-05
29 1.7599069412442e-05
30 1.57232062360458e-05
31 6.88165944023037e-06
32 6.85217734341979e-06
33 6.84709107011568e-06
34 4.94884371654176e-06
35 2.7946472214367e-06
36 9.62504260377731e-07
37 9.61418071821485e-07
38 9.57355390145255e-07
39 9.36898981737223e-07
40 2.36386048621694e-07
41 1.90899452972685e-07
42 7.98645570125654e-08
43 7.38553755257666e-08
44 7.34699131168345e-08
45 3.40852448916153e-08
};
\addlegendentry{$R = 10^{-2}$}
\addplot [semithick, color2]
table {%
0 3.84088796974329
1 0.543993656581853
2 0.533612340836472
3 0.127919157293391
4 0.115978752693587
5 0.0659252433129717
6 0.0649610357191124
7 0.0646046811059123
8 0.0645145491305431
9 0.0263028647276731
10 0.0235185187169419
11 0.00791015338417054
12 0.00756018091284132
13 0.00288314322373823
14 0.00288247351545449
15 0.0028789723987354
16 0.00281375023109783
17 0.00118884712447681
18 0.0011785268922603
19 0.000705746948831579
20 0.000688267086659152
21 0.000683239387234735
22 0.000670427057730668
23 0.000610790740089986
24 0.000418644357299362
25 0.000394541505323078
26 0.000213420716878768
27 0.000205803944900913
28 0.000199761784586833
29 0.000169840514164771
30 0.000106640290141951
31 9.8511204247939e-05
32 6.02316180241754e-05
33 5.64006452067797e-05
34 3.30988618703116e-05
35 3.293735288412e-05
36 3.20599735023698e-05
37 3.14019936608857e-05
38 1.98566582071076e-05
39 1.982409332367e-05
40 1.53667332243469e-05
41 1.22783414707039e-05
42 1.01721216016728e-05
43 8.18336001797921e-06
44 8.01031090519141e-06
45 6.19881724700383e-06
46 3.77577978363136e-06
47 2.94980895680035e-06
48 1.95323296805673e-06
49 1.92972816159453e-06
50 1.65858724924691e-06
51 1.52310980674789e-06
52 8.07071364335205e-07
53 6.83702934326528e-07
54 5.65376866000865e-07
55 3.27956804026288e-07
56 3.27867241125275e-07
57 3.27804801018461e-07
58 3.27718754261665e-07
59 1.69429627415468e-07
60 1.68787333295213e-07
61 8.12110760950366e-08
62 8.06930140482928e-08
63 7.56091615778294e-08
64 5.45643762659536e-08
65 3.39984084631584e-08
};
\addlegendentry{$R = 10^{-4}$}
\addplot [semithick, color3]
table {%
0 3.84153047914929
1 0.543320299870342
2 0.534070442079492
3 0.12756680543867
4 0.113496730205698
5 0.0506673288481077
6 0.0358814921602802
7 0.0244672092360252
8 0.0238776688746144
9 0.0101179497041384
10 0.0101078503184426
11 0.00999053689084555
12 0.00946024321408094
13 0.00429519835101362
14 0.00402942564798431
15 0.0016837321842754
16 0.00162017605188397
17 0.0013365611100748
18 0.000846781378345519
19 0.000758683334276212
20 0.000539359730150734
21 0.000503833524094905
22 0.00050011009666848
23 0.000495620710139974
24 0.000334245327228442
25 0.000323152415202232
26 0.000213616286880233
27 0.000206376258804555
28 0.000174238900411884
29 0.000156064821394249
30 0.000111995619671053
31 9.50470723984981e-05
32 5.63074354383586e-05
33 5.51598928022553e-05
34 5.29320506696536e-05
35 5.03712074846162e-05
36 3.61202724162992e-05
37 3.06406939639477e-05
38 3.06148129182283e-05
39 1.87192634144222e-05
40 1.87141492082457e-05
41 1.09963654953569e-05
42 1.09078843064919e-05
43 6.81507334981006e-06
44 6.80760527334398e-06
45 6.02453392159136e-06
46 5.44501444680417e-06
47 5.29197610815455e-06
48 4.37581138772571e-06
49 3.46899603762047e-06
50 2.56835889462629e-06
51 2.16552397157695e-06
52 1.46787184113214e-06
53 1.17906298554698e-06
54 7.31423667523868e-07
55 6.09796908266709e-07
56 4.79521022110278e-07
57 4.74313588502579e-07
58 4.73395293683795e-07
59 4.59532031590401e-07
60 2.69779494851109e-07
61 2.63284200201091e-07
62 1.46670108122127e-07
63 1.44174115306777e-07
64 7.8081089367219e-08
65 7.54017555383488e-08
66 5.17959337313778e-08
67 4.06469343230675e-08
68 3.90227166238548e-08
69 3.82190570942684e-08
};
\addlegendentry{$R = 10^{-6}$}
\addplot [semithick, color4]
table {%
0 3.84153693039633
1 0.543313660300645
2 0.534074899859852
3 0.12756472133267
4 0.113471889712839
5 0.0503915815639694
6 0.0341801124730529
7 0.01854077597942
8 0.0132146468047648
9 0.00966791087798666
10 0.00937328725287826
11 0.0043358562590216
12 0.00407455846731819
13 0.00256978931268976
14 0.00165048144587941
15 0.00149728919421374
16 0.000858792112725612
17 0.000767199102783942
18 0.000617482653085374
19 0.000609690164718223
20 0.000451028837600105
21 0.000389633930888781
22 0.000305235185530772
23 0.000264462186808855
24 0.000263157300664383
25 0.000166781015176632
26 0.000166763471854124
27 0.000131604513091852
28 0.000109757655786282
29 9.34323526739235e-05
30 5.98398915581564e-05
31 4.79443717634484e-05
32 3.89371472024859e-05
33 3.85455380748509e-05
34 2.55734455092267e-05
35 2.44090157906154e-05
36 1.54751158500179e-05
37 1.48179475676028e-05
38 1.25041714724306e-05
39 8.94731237601869e-06
40 8.73985977263704e-06
41 5.46844128195233e-06
42 5.31093006559177e-06
43 4.3525554058247e-06
44 3.67895569207719e-06
45 2.61465174229403e-06
46 2.14619986536994e-06
47 1.69286577200256e-06
48 1.37742086935583e-06
49 1.37400217785719e-06
50 7.21256256073218e-07
51 7.13305492685771e-07
52 5.63128095722013e-07
53 3.98103167380806e-07
54 3.35457691335641e-07
55 2.25000602874122e-07
56 2.04553421388062e-07
57 1.53522727339424e-07
58 1.41827466950823e-07
59 8.82338257995222e-08
60 7.83093185792362e-08
61 4.90619694503159e-08
62 4.06912923194307e-08
63 4.036699875582e-08
64 2.21880756963116e-08
};
\addlegendentry{$R = 10^{-8}$}
\end{axis}     
        \end{tikzpicture}        
        \caption{History using  $\tilde{\mathcal{B}}$.}
        \label{fig::history_tB}
    \end{subfigure}
    \hspace{0.5cm}
    \begin{subfigure}{0.4\textwidth}
        \centering
        \begin{tikzpicture}[scale = 0.8]
\definecolor{color0}{rgb}{0.12156862745098,0.466666666666667,0.705882352941177}
\definecolor{color1}{rgb}{1,0.498039215686275,0.0549019607843137}
\definecolor{color2}{rgb}{0.172549019607843,0.627450980392157,0.172549019607843}
\definecolor{color3}{rgb}{0.83921568627451,0.152941176470588,0.156862745098039}
\definecolor{color4}{rgb}{0.580392156862745,0.403921568627451,0.741176470588235}

\begin{axis}[
legend cell align={left},
legend style={fill opacity=0.8, draw opacity=1, text opacity=1, draw=white!80!black},
log basis y={10},
tick align=outside,
tick pos=left,
x grid style={white!69.0196078431373!black},
xlabel={it},
xmin=-5, xmax=90,
xtick style={color=black},
y grid style={white!69.0196078431373!black},
ylabel={res},
ymin=1e-08, ymax=10,
ymode=log,
ytick style={color=black}
]
\addplot [semithick, color0]
table {%
0 3.91022616860613
1 3.32126373868696
2 0.452259755611432
3 0.452259674473251
4 0.450859719954138
5 0.229944917615605
6 0.228506471286795
7 0.168086499682398
8 0.161309878984204
9 0.107820449005908
10 0.0443305177693658
11 0.0226050729013875
12 0.022593277475391
13 0.0225916810313528
14 0.0214620868780804
15 0.0107583836741742
16 0.0107307404582674
17 0.0106647870036063
18 0.00457534670028782
19 0.00203188148663432
20 0.00149854295678233
21 0.00149058376393876
22 0.00146685118287434
23 0.00133242719313854
24 0.0005757897123953
25 0.000575789410609354
26 0.000575758043047168
27 0.000289362582624717
28 0.000145437272499768
29 0.000145395485540835
30 0.000145395470304303
31 0.000136629156422434
32 6.69135357954659e-05
33 4.69886897094832e-05
34 4.63056196154237e-05
35 3.05247567922116e-05
36 1.0830764569684e-05
37 1.0270128653347e-05
38 1.02673193107229e-05
39 1.02642205983631e-05
40 5.04832243981835e-06
41 2.26556458055361e-06
42 2.26538992255353e-06
43 2.2653831080655e-06
44 1.10931482926403e-06
45 1.03720668625025e-06
46 1.03155250111646e-06
47 9.88713088886784e-07
48 6.06007595173802e-07
49 2.90069877368421e-07
50 2.78044364526649e-07
51 2.56142962393161e-07
52 2.52169274712631e-07
53 1.63714584619982e-07
54 8.97972987154245e-08
55 8.93141888202774e-08
56 8.9258312663411e-08
57 8.81413763516401e-08
58 4.99021466912853e-08
59 4.98985658055832e-08
60 4.98961931493484e-08
61 2.07550938003745e-08
};
\addlegendentry{$R = 10^{-0}$}
\addplot [semithick, color1]
table {%
0 3.78520406459911
1 0.623921428675594
2 0.386552169211908
3 0.239529489229733
4 0.233982748961352
5 0.233980381043381
6 0.134600223633049
7 0.132506461831134
8 0.0621019873490338
9 0.0269905536330213
10 0.0269888691038806
11 0.026985527068897
12 0.0269810034494822
13 0.0107457328968004
14 0.00656274704598309
15 0.00550754308442203
16 0.00524496057179493
17 0.00417653374042649
18 0.00211621749873385
19 0.00200693721995708
20 0.00097138619772473
21 0.000969132854762969
22 0.000966205450653146
23 0.000852742694590502
24 0.000550036013592066
25 0.000271568601539776
26 0.000271528648472436
27 0.000271509747723103
28 0.000270745287234277
29 0.000130407335391188
30 8.11856718763696e-05
31 8.075740443949e-05
32 8.03059824335173e-05
33 6.89194207851835e-05
34 3.25517364657888e-05
35 2.60012661889006e-05
36 1.94642253588544e-05
37 1.78124952900059e-05
38 1.65630100620024e-05
39 8.07142857422646e-06
40 8.02336103286123e-06
41 3.29194496166793e-06
42 3.28282831673877e-06
43 3.28053024826703e-06
44 2.4091223142648e-06
45 1.70312244685663e-06
46 7.94867979409119e-07
47 7.94792237288196e-07
48 7.74805397435502e-07
49 7.25811186597819e-07
50 4.27218792124499e-07
51 2.18039066974381e-07
52 2.15521885049391e-07
53 2.10896325980626e-07
54 2.10822062452235e-07
55 1.02302610333318e-07
56 8.10844022671228e-08
57 5.98572542290953e-08
58 5.98489717862842e-08
59 3.64129274791887e-08
};
\addlegendentry{$R = 10^{-2}$}
\addplot [semithick, color2]
table {%
0 3.84073522106826
1 0.543706050138536
2 0.533153771917201
3 0.129272343536704
4 0.118611888747733
5 0.0661010919692274
6 0.0658983986261899
7 0.0658686531207026
8 0.0649009971823936
9 0.0307478431217059
10 0.0239755231852732
11 0.0195512973019526
12 0.00948662418264015
13 0.00942430482183946
14 0.0094202880013343
15 0.00941729136145106
16 0.00533148741614444
17 0.00436137918549095
18 0.00321749838351268
19 0.00198745284029058
20 0.00194049723307802
21 0.001922753904307
22 0.0019222506034911
23 0.00124220825094951
24 0.0011579468275442
25 0.000828950397794808
26 0.000644223533863892
27 0.000592762753164303
28 0.000534405078071786
29 0.000508772253783105
30 0.000448658299571724
31 0.000434191087152512
32 0.000315924825480386
33 0.000255832221834233
34 0.000237429884099385
35 0.000162816629414886
36 0.00016048058341164
37 0.000159498061683873
38 0.000159482601447321
39 0.000103390374706631
40 8.86020467099786e-05
41 7.24235858032574e-05
42 5.38479480200142e-05
43 5.38330008043629e-05
44 5.38327341695359e-05
45 5.38050899535319e-05
46 3.85302139099929e-05
47 3.38975541085011e-05
48 2.85744688650885e-05
49 2.00244741399929e-05
50 2.00243896488117e-05
51 2.0022226839138e-05
52 1.38610793889968e-05
53 1.31250933500536e-05
54 1.18134482522077e-05
55 9.49192798901156e-06
56 6.8989497003075e-06
57 6.85193550485905e-06
58 6.84605893414301e-06
59 4.59174204933934e-06
60 4.56937323376705e-06
61 4.06455181112645e-06
62 3.24413930657698e-06
63 2.12966576520887e-06
64 2.0341617044176e-06
65 2.00900330402663e-06
66 1.28840869444341e-06
67 1.28021358228927e-06
68 1.24073022324459e-06
69 8.55911666041413e-07
70 6.32644802173959e-07
71 5.77778981810104e-07
72 4.8009260572677e-07
73 3.40636802634524e-07
74 3.39864846144446e-07
75 3.39719463283271e-07
76 2.16246292132646e-07
77 1.70835102608307e-07
78 1.40714997377005e-07
79 1.00938438394904e-07
80 9.6213817739457e-08
81 9.62039767298087e-08
82 9.61351022775988e-08
83 6.78217721037709e-08
84 5.60095266709428e-08
85 4.92478032835832e-08
86 3.10899603085694e-08
};
\addlegendentry{$R = 10^{-4}$}
\addplot [semithick, color3]
table {%
0 3.84152889110027
1 0.543317224107367
2 0.534065589613972
3 0.127580216877009
4 0.113525559479547
5 0.050699280757106
6 0.0359790732270121
7 0.024004008961216
8 0.0236421067694528
9 0.0111154598295709
10 0.0109387385359687
11 0.0104500045084412
12 0.00951134429056974
13 0.00811985033178978
14 0.00444445099204283
15 0.00442969835298193
16 0.00296010942174606
17 0.00239124785752699
18 0.00169426423603794
19 0.00113751013884211
20 0.00107077050055279
21 0.00077481953529069
22 0.000761216080815319
23 0.000742194482324129
24 0.000719526020362777
25 0.000560518148166615
26 0.000474433366619439
27 0.000462103478760068
28 0.000369732960506091
29 0.000355223074115278
30 0.000286775916304127
31 0.000231289443668785
32 0.000223454141244016
33 0.000164498614500961
34 0.000160182353312174
35 0.000147271460731093
36 0.000132406712736319
37 0.000124691545185268
38 0.000103240454212511
39 9.87887821514758e-05
40 6.78028819756195e-05
41 6.55894138679329e-05
42 4.57329756123727e-05
43 3.89346733283654e-05
44 3.49173371941009e-05
45 2.59315097870754e-05
46 2.5929937918135e-05
47 2.34212126720443e-05
48 2.13516876153619e-05
49 1.97419272712046e-05
50 1.70081600769883e-05
51 1.55533629674544e-05
52 1.13911111218114e-05
53 1.13231276914804e-05
54 8.62660443541409e-06
55 7.44737606422479e-06
56 6.77828016878974e-06
57 5.0379304290926e-06
58 4.97169953649304e-06
59 3.6857307266167e-06
60 3.66114779921601e-06
61 3.6305058640937e-06
62 3.06691030297813e-06
63 2.67259613540657e-06
64 1.94326590175657e-06
65 1.94323600678924e-06
66 1.41922860825462e-06
67 1.29643322313385e-06
68 1.07961796165982e-06
69 7.77664973011792e-07
70 7.73658430536803e-07
71 4.97906077652655e-07
72 4.79224659223593e-07
73 4.72783365694791e-07
74 4.22026676408676e-07
75 3.49797633642663e-07
76 2.46610137660606e-07
77 2.46584929529758e-07
78 1.70566527335554e-07
79 1.45733405038479e-07
80 1.29679137355278e-07
81 9.04935146237175e-08
82 9.04359417260847e-08
83 5.9778014569464e-08
84 5.41881934474348e-08
85 5.37196695716156e-08
86 5.14480312686546e-08
87 4.33769944369467e-08
88 3.12669427312335e-08
};
\addlegendentry{$R = 10^{-6}$}
\addplot [semithick, color4]
table {%
0 3.84153691450913
1 0.543313629524299
2 0.534074851304436
3 0.127564856150045
4 0.113472180856831
5 0.0503919804021417
6 0.0341816524143654
7 0.0185179060350993
8 0.0130471455231016
9 0.00954035395027584
10 0.00931704627570478
11 0.00471554663147967
12 0.00408127422123031
13 0.00364191793278765
14 0.00190503482939417
15 0.00189676593657556
16 0.00120537933056595
17 0.0011055349114773
18 0.000929560379052107
19 0.000652737966147485
20 0.000638908937338375
21 0.000507442785036551
22 0.000460341651439318
23 0.000394324812695135
24 0.000314704690393896
25 0.000310715593535355
26 0.000239105033254264
27 0.000221154392166555
28 0.000182574270006758
29 0.000145480839909037
30 0.000144190892922278
31 0.000110053051452207
32 9.90236202378397e-05
33 8.37722306739399e-05
34 6.09733960655333e-05
35 6.08706407964681e-05
36 4.11994998350067e-05
37 3.72061670077324e-05
38 3.19089595405386e-05
39 2.45763852478914e-05
40 2.41604949101833e-05
41 1.76125594812829e-05
42 1.63445999804636e-05
43 1.34655366469064e-05
44 1.05006708278316e-05
45 1.04266174946809e-05
46 7.9119340956698e-06
47 7.30220475204885e-06
48 6.26938934995108e-06
49 4.53593954554399e-06
50 4.51651619568596e-06
51 3.34215967187945e-06
52 2.98833417869048e-06
53 2.50378719246645e-06
54 1.83356773257305e-06
55 1.82309909093329e-06
56 1.45888669704297e-06
57 1.28549006803829e-06
58 1.04432410126878e-06
59 7.2212590588704e-07
60 7.11797538113741e-07
61 4.55128205461211e-07
62 4.16884687748244e-07
63 3.18838764114185e-07
64 2.284335671687e-07
65 2.23116392763775e-07
66 1.66752469898488e-07
67 1.46282691715014e-07
68 1.2534368889155e-07
69 8.70317174161454e-08
70 8.70144698896713e-08
71 6.06986346377305e-08
72 5.23474637266938e-08
73 4.50646075858204e-08
74 3.08063987873219e-08
};
\addlegendentry{$R = 10^{-8}$}
\end{axis}     
        \end{tikzpicture} 
        \caption{History using $\mathcal{B}$.}
        \label{fig::history_B}
    \end{subfigure}
    \caption{History of the residual using the preconditioned Minres solver.}
    \label{fig:history}
\end{figure}

\subsubsection{Robustness for equivalent pressure
coefficients}\label{sec::firsttest} 

We solve the problem of Section~\ref{sec::testproblem} using the
polynomial orders $\ell=2,3$ and a fixed mesh with $|\mathcal{T}_h| =
384$ elements. We consider the parameters $\alpha_{p} = 10^{-i}, R =
10^{-i}, \xi = 10^{-i}$ with $i = 0,2,4,6,8$ and $\lambda = 10^{j}$
with $j = 0,4,8$. In Table~\ref{tab::firsttest} we have plotted the
number of iterations of the MinRes solver in orange color for
the polynomial order $\ell=2$ and in blue color for $\ell = 3$. Although
varying slightly, a detailed analysis of the numbers shows a similar
behavior as already discussed in the previous section. While being
very robust in $\lambda$ and $\xi$, smaller values of $\alpha_p$ and
$R$ (i.e. the $\mC$-block gets ``smaller'', compare discussion below
\eqref{eq::specequi}) increase the number of iterations. However, in
accordance with the findings from before, an upper bound is given by
approximately 70 iterations. 

\setlength\tabcolsep{2pt}
  \begin{table}[]
    \centering  
      \begin{tabular}{c|ccccc|ccccc|ccccc} 
\toprule\toprule
\multicolumn{16}{c}{$\xi = $\numval{1}} \\
\multicolumn{1}{c}{~}&\multicolumn{5}{c}{$\lambda = $\numval{1}}&\multicolumn{5}{c}{$\lambda = $\numval{10000}}&\multicolumn{5}{c}{$\lambda = $\numval{100000000}}\\
$\alpha_p \!\!\setminus \!\!R^{-1} $ &\numval{1}&\numval{100}&\numval{10000}&\numval{1000000}&\numval{100000000}&\numval{1}&\numval{100}&\numval{10000}&\numval{1000000}&\numval{100000000}&\numval{1}&\numval{100}&\numval{10000}&\numval{1000000}&\numval{100000000}\\
\hline
\numval{1}&{\color{myorange}11},{\color{myblue}10}&{\color{myorange}22},{\color{myblue}22}&{\color{myorange}21},{\color{myblue}21}&{\color{myorange}21},{\color{myblue}21}&{\color{myorange}21},{\color{myblue}21}&{\color{myorange}11},{\color{myblue}10}&{\color{myorange}22},{\color{myblue}22}&{\color{myorange}21},{\color{myblue}21}&{\color{myorange}21},{\color{myblue}21}&{\color{myorange}21},{\color{myblue}21}&{\color{myorange}11},{\color{myblue}10}&{\color{myorange}22},{\color{myblue}22}&{\color{myorange}21},{\color{myblue}21}&{\color{myorange}21},{\color{myblue}21}&{\color{myorange}21},{\color{myblue}21}\\
\numval{0.01}&{\color{myorange}11},{\color{myblue}10}&{\color{myorange}30},{\color{myblue}30}&{\color{myorange}47},{\color{myblue}43}&{\color{myorange}53},{\color{myblue}53}&{\color{myorange}53},{\color{myblue}53}&{\color{myorange}11},{\color{myblue}10}&{\color{myorange}30},{\color{myblue}30}&{\color{myorange}47},{\color{myblue}43}&{\color{myorange}53},{\color{myblue}53}&{\color{myorange}53},{\color{myblue}53}&{\color{myorange}11},{\color{myblue}10}&{\color{myorange}30},{\color{myblue}30}&{\color{myorange}47},{\color{myblue}43}&{\color{myorange}53},{\color{myblue}53}&{\color{myorange}53},{\color{myblue}53}\\
\numval{0.0001}&{\color{myorange}11},{\color{myblue}10}&{\color{myorange}30},{\color{myblue}30}&{\color{myorange}48},{\color{myblue}44}&{\color{myorange}55},{\color{myblue}56}&{\color{myorange}55},{\color{myblue}57}&{\color{myorange}11},{\color{myblue}10}&{\color{myorange}30},{\color{myblue}30}&{\color{myorange}48},{\color{myblue}44}&{\color{myorange}55},{\color{myblue}56}&{\color{myorange}55},{\color{myblue}57}&{\color{myorange}11},{\color{myblue}10}&{\color{myorange}30},{\color{myblue}30}&{\color{myorange}48},{\color{myblue}44}&{\color{myorange}55},{\color{myblue}56}&{\color{myorange}55},{\color{myblue}57}\\
\numval{1e-06}&{\color{myorange}11},{\color{myblue}10}&{\color{myorange}30},{\color{myblue}30}&{\color{myorange}48},{\color{myblue}44}&{\color{myorange}55},{\color{myblue}56}&{\color{myorange}55},{\color{myblue}57}&{\color{myorange}11},{\color{myblue}10}&{\color{myorange}30},{\color{myblue}30}&{\color{myorange}48},{\color{myblue}44}&{\color{myorange}55},{\color{myblue}56}&{\color{myorange}55},{\color{myblue}57}&{\color{myorange}11},{\color{myblue}10}&{\color{myorange}30},{\color{myblue}30}&{\color{myorange}48},{\color{myblue}44}&{\color{myorange}55},{\color{myblue}56}&{\color{myorange}55},{\color{myblue}57}\\
\numval{1e-08}&{\color{myorange}11},{\color{myblue}10}&{\color{myorange}30},{\color{myblue}30}&{\color{myorange}48},{\color{myblue}44}&{\color{myorange}55},{\color{myblue}56}&{\color{myorange}55},{\color{myblue}57}&{\color{myorange}11},{\color{myblue}10}&{\color{myorange}30},{\color{myblue}30}&{\color{myorange}48},{\color{myblue}44}&{\color{myorange}55},{\color{myblue}56}&{\color{myorange}55},{\color{myblue}57}&{\color{myorange}11},{\color{myblue}10}&{\color{myorange}30},{\color{myblue}30}&{\color{myorange}48},{\color{myblue}44}&{\color{myorange}55},{\color{myblue}56}&{\color{myorange}55},{\color{myblue}57}\\
\midrule
\midrule
\rowcolor{mygray}
\multicolumn{16}{c}{$\xi = $\numval{0.01}} \\
\rowcolor{mygray}
\multicolumn{1}{c}{~}&\multicolumn{5}{c}{$\lambda = $\numval{1}}&\multicolumn{5}{c}{$\lambda = $\numval{10000}}&\multicolumn{5}{c}{$\lambda = $\numval{100000000}}\\
\rowcolor{mygray}
$\alpha_p \!\!\setminus \!\!R^{-1} $ &\numval{1}&\numval{100}&\numval{10000}&\numval{1000000}&\numval{100000000}&\numval{1}&\numval{100}&\numval{10000}&\numval{1000000}&\numval{100000000}&\numval{1}&\numval{100}&\numval{10000}&\numval{1000000}&\numval{100000000}\\
\rowcolor{mygray}
\hline
\rowcolor{mygray}
\numval{1}&{\color{myorange}11},{\color{myblue}10}&{\color{myorange}22},{\color{myblue}22}&{\color{myorange}21},{\color{myblue}21}&{\color{myorange}21},{\color{myblue}21}&{\color{myorange}21},{\color{myblue}21}&{\color{myorange}11},{\color{myblue}10}&{\color{myorange}22},{\color{myblue}22}&{\color{myorange}21},{\color{myblue}21}&{\color{myorange}21},{\color{myblue}21}&{\color{myorange}21},{\color{myblue}21}&{\color{myorange}11},{\color{myblue}10}&{\color{myorange}22},{\color{myblue}22}&{\color{myorange}21},{\color{myblue}21}&{\color{myorange}21},{\color{myblue}21}&{\color{myorange}21},{\color{myblue}21}\\
\rowcolor{mygray}
\numval{0.01}&{\color{myorange}13},{\color{myblue}12}&{\color{myorange}30},{\color{myblue}30}&{\color{myorange}48},{\color{myblue}44}&{\color{myorange}54},{\color{myblue}54}&{\color{myorange}53},{\color{myblue}53}&{\color{myorange}13},{\color{myblue}12}&{\color{myorange}30},{\color{myblue}30}&{\color{myorange}48},{\color{myblue}44}&{\color{myorange}54},{\color{myblue}54}&{\color{myorange}53},{\color{myblue}53}&{\color{myorange}13},{\color{myblue}12}&{\color{myorange}30},{\color{myblue}30}&{\color{myorange}48},{\color{myblue}44}&{\color{myorange}54},{\color{myblue}54}&{\color{myorange}53},{\color{myblue}53}\\
\rowcolor{mygray}
\numval{0.0001}&{\color{myorange}13},{\color{myblue}12}&{\color{myorange}30},{\color{myblue}30}&{\color{myorange}48},{\color{myblue}44}&{\color{myorange}56},{\color{myblue}56}&{\color{myorange}55},{\color{myblue}57}&{\color{myorange}13},{\color{myblue}12}&{\color{myorange}30},{\color{myblue}30}&{\color{myorange}48},{\color{myblue}44}&{\color{myorange}56},{\color{myblue}56}&{\color{myorange}55},{\color{myblue}57}&{\color{myorange}13},{\color{myblue}12}&{\color{myorange}30},{\color{myblue}30}&{\color{myorange}48},{\color{myblue}44}&{\color{myorange}56},{\color{myblue}56}&{\color{myorange}55},{\color{myblue}57}\\
\rowcolor{mygray}
\numval{1e-06}&{\color{myorange}13},{\color{myblue}12}&{\color{myorange}30},{\color{myblue}30}&{\color{myorange}48},{\color{myblue}44}&{\color{myorange}56},{\color{myblue}56}&{\color{myorange}55},{\color{myblue}57}&{\color{myorange}13},{\color{myblue}12}&{\color{myorange}30},{\color{myblue}30}&{\color{myorange}48},{\color{myblue}44}&{\color{myorange}56},{\color{myblue}56}&{\color{myorange}55},{\color{myblue}57}&{\color{myorange}13},{\color{myblue}12}&{\color{myorange}30},{\color{myblue}30}&{\color{myorange}48},{\color{myblue}44}&{\color{myorange}56},{\color{myblue}56}&{\color{myorange}55},{\color{myblue}57}\\
\rowcolor{mygray}
\numval{1e-08}&{\color{myorange}13},{\color{myblue}12}&{\color{myorange}30},{\color{myblue}30}&{\color{myorange}48},{\color{myblue}44}&{\color{myorange}56},{\color{myblue}56}&{\color{myorange}55},{\color{myblue}57}&{\color{myorange}13},{\color{myblue}12}&{\color{myorange}30},{\color{myblue}30}&{\color{myorange}48},{\color{myblue}44}&{\color{myorange}56},{\color{myblue}56}&{\color{myorange}55},{\color{myblue}57}&{\color{myorange}13},{\color{myblue}12}&{\color{myorange}30},{\color{myblue}30}&{\color{myorange}48},{\color{myblue}44}&{\color{myorange}56},{\color{myblue}56}&{\color{myorange}55},{\color{myblue}57}\\
\midrule
\midrule
\multicolumn{16}{c}{$\xi = $\numval{0.0001}} \\
\multicolumn{1}{c}{~}&\multicolumn{5}{c}{$\lambda = $\numval{1}}&\multicolumn{5}{c}{$\lambda = $\numval{10000}}&\multicolumn{5}{c}{$\lambda = $\numval{100000000}}\\
$\alpha_p \!\!\setminus \!\!R^{-1} $ &\numval{1}&\numval{100}&\numval{10000}&\numval{1000000}&\numval{100000000}&\numval{1}&\numval{100}&\numval{10000}&\numval{1000000}&\numval{100000000}&\numval{1}&\numval{100}&\numval{10000}&\numval{1000000}&\numval{100000000}\\
\hline
\numval{1}&{\color{myorange}11},{\color{myblue}10}&{\color{myorange}22},{\color{myblue}22}&{\color{myorange}21},{\color{myblue}21}&{\color{myorange}21},{\color{myblue}21}&{\color{myorange}21},{\color{myblue}21}&{\color{myorange}11},{\color{myblue}10}&{\color{myorange}22},{\color{myblue}22}&{\color{myorange}21},{\color{myblue}21}&{\color{myorange}21},{\color{myblue}21}&{\color{myorange}21},{\color{myblue}21}&{\color{myorange}11},{\color{myblue}10}&{\color{myorange}22},{\color{myblue}22}&{\color{myorange}21},{\color{myblue}21}&{\color{myorange}21},{\color{myblue}21}&{\color{myorange}21},{\color{myblue}21}\\
\numval{0.01}&{\color{myorange}13},{\color{myblue}12}&{\color{myorange}30},{\color{myblue}30}&{\color{myorange}48},{\color{myblue}44}&{\color{myorange}54},{\color{myblue}54}&{\color{myorange}53},{\color{myblue}53}&{\color{myorange}13},{\color{myblue}12}&{\color{myorange}30},{\color{myblue}30}&{\color{myorange}48},{\color{myblue}44}&{\color{myorange}54},{\color{myblue}54}&{\color{myorange}53},{\color{myblue}53}&{\color{myorange}13},{\color{myblue}12}&{\color{myorange}30},{\color{myblue}30}&{\color{myorange}48},{\color{myblue}44}&{\color{myorange}54},{\color{myblue}54}&{\color{myorange}53},{\color{myblue}53}\\
\numval{0.0001}&{\color{myorange}13},{\color{myblue}12}&{\color{myorange}34},{\color{myblue}34}&{\color{myorange}48},{\color{myblue}45}&{\color{myorange}57},{\color{myblue}58}&{\color{myorange}56},{\color{myblue}58}&{\color{myorange}13},{\color{myblue}12}&{\color{myorange}34},{\color{myblue}34}&{\color{myorange}48},{\color{myblue}45}&{\color{myorange}57},{\color{myblue}58}&{\color{myorange}56},{\color{myblue}58}&{\color{myorange}13},{\color{myblue}12}&{\color{myorange}34},{\color{myblue}34}&{\color{myorange}48},{\color{myblue}45}&{\color{myorange}57},{\color{myblue}58}&{\color{myorange}56},{\color{myblue}58}\\
\numval{1e-06}&{\color{myorange}15},{\color{myblue}12}&{\color{myorange}34},{\color{myblue}34}&{\color{myorange}48},{\color{myblue}45}&{\color{myorange}57},{\color{myblue}58}&{\color{myorange}56},{\color{myblue}58}&{\color{myorange}15},{\color{myblue}12}&{\color{myorange}34},{\color{myblue}34}&{\color{myorange}48},{\color{myblue}45}&{\color{myorange}57},{\color{myblue}58}&{\color{myorange}56},{\color{myblue}58}&{\color{myorange}15},{\color{myblue}12}&{\color{myorange}34},{\color{myblue}34}&{\color{myorange}48},{\color{myblue}45}&{\color{myorange}57},{\color{myblue}58}&{\color{myorange}56},{\color{myblue}58}\\
\numval{1e-08}&{\color{myorange}13},{\color{myblue}14}&{\color{myorange}34},{\color{myblue}34}&{\color{myorange}48},{\color{myblue}45}&{\color{myorange}57},{\color{myblue}58}&{\color{myorange}56},{\color{myblue}58}&{\color{myorange}13},{\color{myblue}14}&{\color{myorange}34},{\color{myblue}34}&{\color{myorange}48},{\color{myblue}45}&{\color{myorange}57},{\color{myblue}58}&{\color{myorange}56},{\color{myblue}58}&{\color{myorange}13},{\color{myblue}14}&{\color{myorange}34},{\color{myblue}34}&{\color{myorange}48},{\color{myblue}45}&{\color{myorange}57},{\color{myblue}58}&{\color{myorange}56},{\color{myblue}58}\\
\midrule
\midrule
\rowcolor{mygray}
\multicolumn{16}{c}{$\xi = $\numval{1e-06}} \\
\rowcolor{mygray}
\multicolumn{1}{c}{~}&\multicolumn{5}{c}{$\lambda = $\numval{1}}&\multicolumn{5}{c}{$\lambda = $\numval{10000}}&\multicolumn{5}{c}{$\lambda = $\numval{100000000}}\\
\rowcolor{mygray}
$\alpha_p \!\!\setminus \!\!R^{-1} $ &\numval{1}&\numval{100}&\numval{10000}&\numval{1000000}&\numval{100000000}&\numval{1}&\numval{100}&\numval{10000}&\numval{1000000}&\numval{100000000}&\numval{1}&\numval{100}&\numval{10000}&\numval{1000000}&\numval{100000000}\\
\rowcolor{mygray}
\hline
\rowcolor{mygray}
\numval{1}&{\color{myorange}11},{\color{myblue}10}&{\color{myorange}22},{\color{myblue}22}&{\color{myorange}21},{\color{myblue}21}&{\color{myorange}21},{\color{myblue}21}&{\color{myorange}21},{\color{myblue}21}&{\color{myorange}11},{\color{myblue}10}&{\color{myorange}22},{\color{myblue}22}&{\color{myorange}21},{\color{myblue}21}&{\color{myorange}21},{\color{myblue}21}&{\color{myorange}21},{\color{myblue}21}&{\color{myorange}11},{\color{myblue}10}&{\color{myorange}22},{\color{myblue}22}&{\color{myorange}21},{\color{myblue}21}&{\color{myorange}21},{\color{myblue}21}&{\color{myorange}21},{\color{myblue}21}\\
\rowcolor{mygray}
\numval{0.01}&{\color{myorange}13},{\color{myblue}12}&{\color{myorange}31},{\color{myblue}30}&{\color{myorange}48},{\color{myblue}44}&{\color{myorange}54},{\color{myblue}54}&{\color{myorange}53},{\color{myblue}53}&{\color{myorange}13},{\color{myblue}12}&{\color{myorange}31},{\color{myblue}30}&{\color{myorange}48},{\color{myblue}44}&{\color{myorange}54},{\color{myblue}54}&{\color{myorange}53},{\color{myblue}53}&{\color{myorange}13},{\color{myblue}12}&{\color{myorange}31},{\color{myblue}30}&{\color{myorange}48},{\color{myblue}44}&{\color{myorange}54},{\color{myblue}54}&{\color{myorange}53},{\color{myblue}53}\\
\rowcolor{mygray}
\numval{0.0001}&{\color{myorange}15},{\color{myblue}14}&{\color{myorange}35},{\color{myblue}36}&{\color{myorange}49},{\color{myblue}45}&{\color{myorange}57},{\color{myblue}58}&{\color{myorange}56},{\color{myblue}58}&{\color{myorange}15},{\color{myblue}14}&{\color{myorange}35},{\color{myblue}36}&{\color{myorange}49},{\color{myblue}45}&{\color{myorange}57},{\color{myblue}58}&{\color{myorange}56},{\color{myblue}58}&{\color{myorange}15},{\color{myblue}14}&{\color{myorange}35},{\color{myblue}36}&{\color{myorange}49},{\color{myblue}45}&{\color{myorange}57},{\color{myblue}58}&{\color{myorange}56},{\color{myblue}58}\\
\rowcolor{mygray}
\numval{1e-06}&{\color{myorange}15},{\color{myblue}14}&{\color{myorange}39},{\color{myblue}38}&{\color{myorange}54},{\color{myblue}50}&{\color{myorange}61},{\color{myblue}63}&{\color{myorange}59},{\color{myblue}59}&{\color{myorange}15},{\color{myblue}14}&{\color{myorange}39},{\color{myblue}38}&{\color{myorange}54},{\color{myblue}50}&{\color{myorange}61},{\color{myblue}63}&{\color{myorange}59},{\color{myblue}59}&{\color{myorange}15},{\color{myblue}14}&{\color{myorange}39},{\color{myblue}38}&{\color{myorange}54},{\color{myblue}50}&{\color{myorange}61},{\color{myblue}63}&{\color{myorange}59},{\color{myblue}59}\\
\rowcolor{mygray}
\numval{1e-08}&{\color{myorange}15},{\color{myblue}14}&{\color{myorange}40},{\color{myblue}40}&{\color{myorange}56},{\color{myblue}51}&{\color{myorange}61},{\color{myblue}62}&{\color{myorange}59},{\color{myblue}59}&{\color{myorange}15},{\color{myblue}14}&{\color{myorange}40},{\color{myblue}40}&{\color{myorange}56},{\color{myblue}51}&{\color{myorange}61},{\color{myblue}62}&{\color{myorange}59},{\color{myblue}59}&{\color{myorange}15},{\color{myblue}14}&{\color{myorange}40},{\color{myblue}40}&{\color{myorange}56},{\color{myblue}51}&{\color{myorange}61},{\color{myblue}62}&{\color{myorange}59},{\color{myblue}59}\\
\midrule
\midrule
\multicolumn{16}{c}{$\xi = $\numval{1e-08}} \\
\multicolumn{1}{c}{~}&\multicolumn{5}{c}{$\lambda = $\numval{1}}&\multicolumn{5}{c}{$\lambda = $\numval{10000}}&\multicolumn{5}{c}{$\lambda = $\numval{100000000}}\\
$\alpha_p \!\!\setminus \!\!R^{-1} $ &\numval{1}&\numval{100}&\numval{10000}&\numval{1000000}&\numval{100000000}&\numval{1}&\numval{100}&\numval{10000}&\numval{1000000}&\numval{100000000}&\numval{1}&\numval{100}&\numval{10000}&\numval{1000000}&\numval{100000000}\\
\hline
\numval{1}&{\color{myorange}11},{\color{myblue}10}&{\color{myorange}22},{\color{myblue}22}&{\color{myorange}21},{\color{myblue}21}&{\color{myorange}21},{\color{myblue}21}&{\color{myorange}21},{\color{myblue}21}&{\color{myorange}11},{\color{myblue}10}&{\color{myorange}22},{\color{myblue}22}&{\color{myorange}21},{\color{myblue}21}&{\color{myorange}21},{\color{myblue}21}&{\color{myorange}21},{\color{myblue}21}&{\color{myorange}11},{\color{myblue}10}&{\color{myorange}22},{\color{myblue}22}&{\color{myorange}21},{\color{myblue}21}&{\color{myorange}21},{\color{myblue}21}&{\color{myorange}21},{\color{myblue}21}\\
\numval{0.01}&{\color{myorange}13},{\color{myblue}12}&{\color{myorange}31},{\color{myblue}30}&{\color{myorange}48},{\color{myblue}44}&{\color{myorange}54},{\color{myblue}54}&{\color{myorange}53},{\color{myblue}53}&{\color{myorange}13},{\color{myblue}12}&{\color{myorange}31},{\color{myblue}30}&{\color{myorange}48},{\color{myblue}44}&{\color{myorange}54},{\color{myblue}54}&{\color{myorange}53},{\color{myblue}53}&{\color{myorange}13},{\color{myblue}12}&{\color{myorange}31},{\color{myblue}30}&{\color{myorange}48},{\color{myblue}44}&{\color{myorange}54},{\color{myblue}54}&{\color{myorange}53},{\color{myblue}53}\\
\numval{0.0001}&{\color{myorange}15},{\color{myblue}14}&{\color{myorange}36},{\color{myblue}34}&{\color{myorange}49},{\color{myblue}45}&{\color{myorange}57},{\color{myblue}58}&{\color{myorange}56},{\color{myblue}58}&{\color{myorange}15},{\color{myblue}14}&{\color{myorange}36},{\color{myblue}34}&{\color{myorange}49},{\color{myblue}45}&{\color{myorange}57},{\color{myblue}58}&{\color{myorange}56},{\color{myblue}58}&{\color{myorange}15},{\color{myblue}14}&{\color{myorange}36},{\color{myblue}34}&{\color{myorange}49},{\color{myblue}45}&{\color{myorange}57},{\color{myblue}58}&{\color{myorange}56},{\color{myblue}58}\\
\numval{1e-06}&{\color{myorange}17},{\color{myblue}14}&{\color{myorange}40},{\color{myblue}40}&{\color{myorange}56},{\color{myblue}52}&{\color{myorange}61},{\color{myblue}61}&{\color{myorange}59},{\color{myblue}59}&{\color{myorange}17},{\color{myblue}14}&{\color{myorange}40},{\color{myblue}40}&{\color{myorange}56},{\color{myblue}52}&{\color{myorange}61},{\color{myblue}61}&{\color{myorange}59},{\color{myblue}59}&{\color{myorange}17},{\color{myblue}14}&{\color{myorange}40},{\color{myblue}40}&{\color{myorange}56},{\color{myblue}52}&{\color{myorange}61},{\color{myblue}61}&{\color{myorange}59},{\color{myblue}59}\\
\numval{1e-08}&{\color{myorange}19},{\color{myblue}16}&{\color{myorange}44},{\color{myblue}45}&{\color{myorange}63},{\color{myblue}58}&{\color{myorange}66},{\color{myblue}67}&{\color{myorange}63},{\color{myblue}64}&{\color{myorange}19},{\color{myblue}16}&{\color{myorange}44},{\color{myblue}45}&{\color{myorange}63},{\color{myblue}58}&{\color{myorange}66},{\color{myblue}67}&{\color{myorange}63},{\color{myblue}64}&{\color{myorange}19},{\color{myblue}16}&{\color{myorange}44},{\color{myblue}45}&{\color{myorange}63},{\color{myblue}58}&{\color{myorange}66},{\color{myblue}67}&{\color{myorange}63},{\color{myblue}64}\\
\bottomrule\bottomrule\end{tabular}
    
    \caption{Results for the first test case of
    Section~\ref{sec::firsttest}. Orange color is used for $\ell=2$, blue
    for $\ell=3$.}\label{tab::firsttest}
\end{table}

\subsubsection{Robustness for different pressure
coefficients}\label{sec::secondtest} 

Again we solve the problem of Section~\ref{sec::testproblem} using the
polynomial orders $\ell=2,3$ and a fixed mesh with $|\mathcal{T}_h| =
384$ elements. In contrast to the previous section we now fix the
values of $\alpha_{p_1} = R_1 = 10^{-4}$ and vary $\alpha_{p_2} =
10^{-i}, R_2 = 10^{-i}, \xi = 10^{-i}$ with $i = 0,2,4,6,8$ and
$\lambda = 10^{j}$ with $j = 0,4,8$. Again, the results of
Table~\ref{tab::secondtest} show robustness with respect to all
parameters. Comparing the results with the numbers from the previous
section, see Table~\ref{tab::firsttest}, we see that for bigger values
of $\alpha_{p_2}, R_2$ the (relatively) smaller parameters
$\alpha_{p_1}, R_1$ slightly increase the number of iterations (the
$\mC$-block is ``smaller'' as before), and for smaller values of
$\alpha_{p_2}, R_2$ the (relatively) larger parameters $\alpha_{p_1},
R_1$ slightly reduce the number (i.e. the $\mC$-block is ``larger'' as
before). 

\setlength\tabcolsep{2pt}
  \begin{table}[]
    \centering  
      \begin{tabular}{c|ccccc|ccccc|ccccc} 
\toprule\toprule
\multicolumn{16}{c}{$\xi = $\numval{1}} \\
\multicolumn{1}{c}{~}&\multicolumn{5}{c}{$\lambda = $\numval{1}}&\multicolumn{5}{c}{$\lambda = $\numval{10000}}&\multicolumn{5}{c}{$\lambda = $\numval{100000000}}\\
$\alpha_{p_2} \!\!\setminus \!\!R_2^{-1} $ &\numval{1}&\numval{100}&\numval{10000}&\numval{1000000}&\numval{100000000}&\numval{1}&\numval{100}&\numval{10000}&\numval{1000000}&\numval{100000000}&\numval{1}&\numval{100}&\numval{10000}&\numval{1000000}&\numval{100000000}\\
\hline
\numval{1}&{\color{myorange}18},{\color{myblue}18}&{\color{myorange}25},{\color{myblue}26}&{\color{myorange}27},{\color{myblue}27}&{\color{myorange}27},{\color{myblue}27}&{\color{myorange}27},{\color{myblue}27}&{\color{myorange}18},{\color{myblue}18}&{\color{myorange}25},{\color{myblue}26}&{\color{myorange}27},{\color{myblue}27}&{\color{myorange}27},{\color{myblue}27}&{\color{myorange}27},{\color{myblue}27}&{\color{myorange}18},{\color{myblue}18}&{\color{myorange}25},{\color{myblue}26}&{\color{myorange}27},{\color{myblue}27}&{\color{myorange}27},{\color{myblue}27}&{\color{myorange}27},{\color{myblue}27}\\
\numval{0.01}&{\color{myorange}18},{\color{myblue}18}&{\color{myorange}31},{\color{myblue}31}&{\color{myorange}47},{\color{myblue}43}&{\color{myorange}50},{\color{myblue}47}&{\color{myorange}50},{\color{myblue}47}&{\color{myorange}18},{\color{myblue}18}&{\color{myorange}31},{\color{myblue}31}&{\color{myorange}47},{\color{myblue}43}&{\color{myorange}50},{\color{myblue}47}&{\color{myorange}50},{\color{myblue}47}&{\color{myorange}18},{\color{myblue}18}&{\color{myorange}31},{\color{myblue}31}&{\color{myorange}47},{\color{myblue}43}&{\color{myorange}50},{\color{myblue}47}&{\color{myorange}50},{\color{myblue}47}\\
\numval{0.0001}&{\color{myorange}18},{\color{myblue}18}&{\color{myorange}31},{\color{myblue}31}&{\color{myorange}48},{\color{myblue}44}&{\color{myorange}51},{\color{myblue}48}&{\color{myorange}51},{\color{myblue}48}&{\color{myorange}18},{\color{myblue}18}&{\color{myorange}31},{\color{myblue}31}&{\color{myorange}48},{\color{myblue}44}&{\color{myorange}51},{\color{myblue}48}&{\color{myorange}51},{\color{myblue}48}&{\color{myorange}18},{\color{myblue}18}&{\color{myorange}31},{\color{myblue}31}&{\color{myorange}48},{\color{myblue}44}&{\color{myorange}51},{\color{myblue}48}&{\color{myorange}51},{\color{myblue}48}\\
\numval{1e-06}&{\color{myorange}18},{\color{myblue}18}&{\color{myorange}31},{\color{myblue}31}&{\color{myorange}48},{\color{myblue}44}&{\color{myorange}51},{\color{myblue}48}&{\color{myorange}51},{\color{myblue}48}&{\color{myorange}18},{\color{myblue}18}&{\color{myorange}31},{\color{myblue}31}&{\color{myorange}48},{\color{myblue}44}&{\color{myorange}51},{\color{myblue}48}&{\color{myorange}51},{\color{myblue}48}&{\color{myorange}18},{\color{myblue}18}&{\color{myorange}31},{\color{myblue}31}&{\color{myorange}48},{\color{myblue}44}&{\color{myorange}51},{\color{myblue}48}&{\color{myorange}51},{\color{myblue}48}\\
\numval{1e-08}&{\color{myorange}18},{\color{myblue}18}&{\color{myorange}31},{\color{myblue}31}&{\color{myorange}48},{\color{myblue}44}&{\color{myorange}51},{\color{myblue}48}&{\color{myorange}51},{\color{myblue}48}&{\color{myorange}18},{\color{myblue}18}&{\color{myorange}31},{\color{myblue}31}&{\color{myorange}48},{\color{myblue}44}&{\color{myorange}51},{\color{myblue}48}&{\color{myorange}51},{\color{myblue}48}&{\color{myorange}18},{\color{myblue}18}&{\color{myorange}31},{\color{myblue}31}&{\color{myorange}48},{\color{myblue}44}&{\color{myorange}51},{\color{myblue}48}&{\color{myorange}51},{\color{myblue}48}\\
\midrule
\midrule
\rowcolor{mygray}
\multicolumn{16}{c}{$\xi = $\numval{0.01}} \\
\rowcolor{mygray}
\multicolumn{1}{c}{~}&\multicolumn{5}{c}{$\lambda = $\numval{1}}&\multicolumn{5}{c}{$\lambda = $\numval{10000}}&\multicolumn{5}{c}{$\lambda = $\numval{100000000}}\\
\rowcolor{mygray}
$\alpha_{p_2} \!\!\setminus \!\!R_2^{-1} $ &\numval{1}&\numval{100}&\numval{10000}&\numval{1000000}&\numval{100000000}&\numval{1}&\numval{100}&\numval{10000}&\numval{1000000}&\numval{100000000}&\numval{1}&\numval{100}&\numval{10000}&\numval{1000000}&\numval{100000000}\\
\rowcolor{mygray}
\hline
\rowcolor{mygray}
\numval{1}&{\color{myorange}42},{\color{myblue}41}&{\color{myorange}43},{\color{myblue}41}&{\color{myorange}43},{\color{myblue}40}&{\color{myorange}43},{\color{myblue}39}&{\color{myorange}43},{\color{myblue}39}&{\color{myorange}42},{\color{myblue}41}&{\color{myorange}43},{\color{myblue}41}&{\color{myorange}43},{\color{myblue}40}&{\color{myorange}43},{\color{myblue}39}&{\color{myorange}43},{\color{myblue}39}&{\color{myorange}42},{\color{myblue}41}&{\color{myorange}43},{\color{myblue}41}&{\color{myorange}43},{\color{myblue}40}&{\color{myorange}43},{\color{myblue}39}&{\color{myorange}43},{\color{myblue}39}\\
\rowcolor{mygray}
\numval{0.01}&{\color{myorange}46},{\color{myblue}45}&{\color{myorange}44},{\color{myblue}41}&{\color{myorange}48},{\color{myblue}44}&{\color{myorange}52},{\color{myblue}52}&{\color{myorange}52},{\color{myblue}52}&{\color{myorange}46},{\color{myblue}45}&{\color{myorange}44},{\color{myblue}41}&{\color{myorange}48},{\color{myblue}44}&{\color{myorange}52},{\color{myblue}52}&{\color{myorange}52},{\color{myblue}52}&{\color{myorange}46},{\color{myblue}45}&{\color{myorange}44},{\color{myblue}41}&{\color{myorange}48},{\color{myblue}44}&{\color{myorange}52},{\color{myblue}52}&{\color{myorange}52},{\color{myblue}52}\\
\rowcolor{mygray}
\numval{0.0001}&{\color{myorange}47},{\color{myblue}45}&{\color{myorange}44},{\color{myblue}41}&{\color{myorange}48},{\color{myblue}44}&{\color{myorange}54},{\color{myblue}54}&{\color{myorange}54},{\color{myblue}54}&{\color{myorange}47},{\color{myblue}45}&{\color{myorange}44},{\color{myblue}41}&{\color{myorange}48},{\color{myblue}44}&{\color{myorange}54},{\color{myblue}54}&{\color{myorange}54},{\color{myblue}54}&{\color{myorange}47},{\color{myblue}45}&{\color{myorange}44},{\color{myblue}41}&{\color{myorange}48},{\color{myblue}44}&{\color{myorange}54},{\color{myblue}54}&{\color{myorange}54},{\color{myblue}54}\\
\rowcolor{mygray}
\numval{1e-06}&{\color{myorange}47},{\color{myblue}45}&{\color{myorange}44},{\color{myblue}41}&{\color{myorange}48},{\color{myblue}44}&{\color{myorange}54},{\color{myblue}54}&{\color{myorange}54},{\color{myblue}54}&{\color{myorange}47},{\color{myblue}45}&{\color{myorange}44},{\color{myblue}41}&{\color{myorange}48},{\color{myblue}44}&{\color{myorange}54},{\color{myblue}54}&{\color{myorange}54},{\color{myblue}54}&{\color{myorange}47},{\color{myblue}45}&{\color{myorange}44},{\color{myblue}41}&{\color{myorange}48},{\color{myblue}44}&{\color{myorange}54},{\color{myblue}54}&{\color{myorange}54},{\color{myblue}54}\\
\rowcolor{mygray}
\numval{1e-08}&{\color{myorange}46},{\color{myblue}45}&{\color{myorange}44},{\color{myblue}41}&{\color{myorange}48},{\color{myblue}44}&{\color{myorange}54},{\color{myblue}54}&{\color{myorange}54},{\color{myblue}54}&{\color{myorange}46},{\color{myblue}45}&{\color{myorange}44},{\color{myblue}41}&{\color{myorange}48},{\color{myblue}44}&{\color{myorange}54},{\color{myblue}54}&{\color{myorange}54},{\color{myblue}54}&{\color{myorange}46},{\color{myblue}45}&{\color{myorange}44},{\color{myblue}41}&{\color{myorange}48},{\color{myblue}44}&{\color{myorange}54},{\color{myblue}54}&{\color{myorange}54},{\color{myblue}54}\\
\midrule
\midrule
\multicolumn{16}{c}{$\xi = $\numval{0.0001}} \\
\multicolumn{1}{c}{~}&\multicolumn{5}{c}{$\lambda = $\numval{1}}&\multicolumn{5}{c}{$\lambda = $\numval{10000}}&\multicolumn{5}{c}{$\lambda = $\numval{100000000}}\\
$\alpha_{p_2} \!\!\setminus \!\!R_2^{-1} $ &\numval{1}&\numval{100}&\numval{10000}&\numval{1000000}&\numval{100000000}&\numval{1}&\numval{100}&\numval{10000}&\numval{1000000}&\numval{100000000}&\numval{1}&\numval{100}&\numval{10000}&\numval{1000000}&\numval{100000000}\\
\hline
\numval{1}&{\color{myorange}45},{\color{myblue}41}&{\color{myorange}44},{\color{myblue}41}&{\color{myorange}45},{\color{myblue}40}&{\color{myorange}45},{\color{myblue}40}&{\color{myorange}45},{\color{myblue}40}&{\color{myorange}45},{\color{myblue}41}&{\color{myorange}44},{\color{myblue}41}&{\color{myorange}45},{\color{myblue}40}&{\color{myorange}45},{\color{myblue}40}&{\color{myorange}45},{\color{myblue}40}&{\color{myorange}45},{\color{myblue}41}&{\color{myorange}44},{\color{myblue}41}&{\color{myorange}45},{\color{myblue}40}&{\color{myorange}45},{\color{myblue}40}&{\color{myorange}45},{\color{myblue}40}\\
\numval{0.01}&{\color{myorange}51},{\color{myblue}48}&{\color{myorange}46},{\color{myblue}41}&{\color{myorange}48},{\color{myblue}45}&{\color{myorange}53},{\color{myblue}52}&{\color{myorange}54},{\color{myblue}52}&{\color{myorange}51},{\color{myblue}48}&{\color{myorange}46},{\color{myblue}41}&{\color{myorange}48},{\color{myblue}45}&{\color{myorange}53},{\color{myblue}52}&{\color{myorange}54},{\color{myblue}52}&{\color{myorange}51},{\color{myblue}48}&{\color{myorange}46},{\color{myblue}41}&{\color{myorange}48},{\color{myblue}45}&{\color{myorange}53},{\color{myblue}52}&{\color{myorange}54},{\color{myblue}52}\\
\numval{0.0001}&{\color{myorange}55},{\color{myblue}52}&{\color{myorange}49},{\color{myblue}46}&{\color{myorange}48},{\color{myblue}45}&{\color{myorange}57},{\color{myblue}59}&{\color{myorange}57},{\color{myblue}59}&{\color{myorange}55},{\color{myblue}52}&{\color{myorange}49},{\color{myblue}46}&{\color{myorange}48},{\color{myblue}45}&{\color{myorange}57},{\color{myblue}59}&{\color{myorange}57},{\color{myblue}59}&{\color{myorange}55},{\color{myblue}52}&{\color{myorange}49},{\color{myblue}46}&{\color{myorange}48},{\color{myblue}45}&{\color{myorange}57},{\color{myblue}59}&{\color{myorange}57},{\color{myblue}59}\\
\numval{1e-06}&{\color{myorange}57},{\color{myblue}52}&{\color{myorange}49},{\color{myblue}46}&{\color{myorange}48},{\color{myblue}45}&{\color{myorange}57},{\color{myblue}59}&{\color{myorange}59},{\color{myblue}58}&{\color{myorange}57},{\color{myblue}52}&{\color{myorange}49},{\color{myblue}46}&{\color{myorange}48},{\color{myblue}45}&{\color{myorange}57},{\color{myblue}59}&{\color{myorange}59},{\color{myblue}58}&{\color{myorange}57},{\color{myblue}52}&{\color{myorange}49},{\color{myblue}46}&{\color{myorange}48},{\color{myblue}45}&{\color{myorange}57},{\color{myblue}59}&{\color{myorange}59},{\color{myblue}58}\\
\numval{1e-08}&{\color{myorange}55},{\color{myblue}52}&{\color{myorange}49},{\color{myblue}46}&{\color{myorange}48},{\color{myblue}45}&{\color{myorange}57},{\color{myblue}59}&{\color{myorange}57},{\color{myblue}59}&{\color{myorange}55},{\color{myblue}52}&{\color{myorange}49},{\color{myblue}46}&{\color{myorange}48},{\color{myblue}45}&{\color{myorange}57},{\color{myblue}59}&{\color{myorange}57},{\color{myblue}59}&{\color{myorange}55},{\color{myblue}52}&{\color{myorange}49},{\color{myblue}46}&{\color{myorange}48},{\color{myblue}45}&{\color{myorange}57},{\color{myblue}59}&{\color{myorange}57},{\color{myblue}59}\\
\midrule
\midrule
\rowcolor{mygray}
\multicolumn{16}{c}{$\xi = $\numval{1e-06}} \\
\rowcolor{mygray}
\multicolumn{1}{c}{~}&\multicolumn{5}{c}{$\lambda = $\numval{1}}&\multicolumn{5}{c}{$\lambda = $\numval{10000}}&\multicolumn{5}{c}{$\lambda = $\numval{100000000}}\\
\rowcolor{mygray}
$\alpha_{p_2} \!\!\setminus \!\!R_2^{-1} $ &\numval{1}&\numval{100}&\numval{10000}&\numval{1000000}&\numval{100000000}&\numval{1}&\numval{100}&\numval{10000}&\numval{1000000}&\numval{100000000}&\numval{1}&\numval{100}&\numval{10000}&\numval{1000000}&\numval{100000000}\\
\rowcolor{mygray}
\hline
\rowcolor{mygray}
\numval{1}&{\color{myorange}46},{\color{myblue}41}&{\color{myorange}44},{\color{myblue}41}&{\color{myorange}45},{\color{myblue}40}&{\color{myorange}45},{\color{myblue}40}&{\color{myorange}45},{\color{myblue}40}&{\color{myorange}46},{\color{myblue}41}&{\color{myorange}44},{\color{myblue}41}&{\color{myorange}45},{\color{myblue}40}&{\color{myorange}45},{\color{myblue}40}&{\color{myorange}45},{\color{myblue}40}&{\color{myorange}46},{\color{myblue}41}&{\color{myorange}44},{\color{myblue}41}&{\color{myorange}45},{\color{myblue}40}&{\color{myorange}45},{\color{myblue}40}&{\color{myorange}45},{\color{myblue}40}\\
\rowcolor{mygray}
\numval{0.01}&{\color{myorange}51},{\color{myblue}48}&{\color{myorange}47},{\color{myblue}41}&{\color{myorange}48},{\color{myblue}45}&{\color{myorange}53},{\color{myblue}52}&{\color{myorange}54},{\color{myblue}53}&{\color{myorange}51},{\color{myblue}48}&{\color{myorange}47},{\color{myblue}41}&{\color{myorange}48},{\color{myblue}45}&{\color{myorange}53},{\color{myblue}52}&{\color{myorange}54},{\color{myblue}53}&{\color{myorange}51},{\color{myblue}48}&{\color{myorange}47},{\color{myblue}41}&{\color{myorange}48},{\color{myblue}45}&{\color{myorange}53},{\color{myblue}52}&{\color{myorange}54},{\color{myblue}53}\\
\rowcolor{mygray}
\numval{0.0001}&{\color{myorange}57},{\color{myblue}54}&{\color{myorange}51},{\color{myblue}47}&{\color{myorange}49},{\color{myblue}45}&{\color{myorange}60},{\color{myblue}60}&{\color{myorange}60},{\color{myblue}62}&{\color{myorange}57},{\color{myblue}54}&{\color{myorange}51},{\color{myblue}47}&{\color{myorange}49},{\color{myblue}45}&{\color{myorange}60},{\color{myblue}60}&{\color{myorange}60},{\color{myblue}62}&{\color{myorange}57},{\color{myblue}54}&{\color{myorange}51},{\color{myblue}47}&{\color{myorange}49},{\color{myblue}45}&{\color{myorange}60},{\color{myblue}60}&{\color{myorange}60},{\color{myblue}62}\\
\rowcolor{mygray}
\numval{1e-06}&{\color{myorange}58},{\color{myblue}55}&{\color{myorange}52},{\color{myblue}49}&{\color{myorange}49},{\color{myblue}46}&{\color{myorange}60},{\color{myblue}60}&{\color{myorange}60},{\color{myblue}61}&{\color{myorange}58},{\color{myblue}55}&{\color{myorange}52},{\color{myblue}49}&{\color{myorange}49},{\color{myblue}46}&{\color{myorange}60},{\color{myblue}60}&{\color{myorange}60},{\color{myblue}61}&{\color{myorange}58},{\color{myblue}55}&{\color{myorange}52},{\color{myblue}49}&{\color{myorange}49},{\color{myblue}46}&{\color{myorange}60},{\color{myblue}60}&{\color{myorange}60},{\color{myblue}61}\\
\rowcolor{mygray}
\numval{1e-08}&{\color{myorange}59},{\color{myblue}56}&{\color{myorange}52},{\color{myblue}49}&{\color{myorange}49},{\color{myblue}46}&{\color{myorange}60},{\color{myblue}60}&{\color{myorange}60},{\color{myblue}61}&{\color{myorange}59},{\color{myblue}56}&{\color{myorange}52},{\color{myblue}49}&{\color{myorange}49},{\color{myblue}46}&{\color{myorange}60},{\color{myblue}60}&{\color{myorange}60},{\color{myblue}61}&{\color{myorange}59},{\color{myblue}56}&{\color{myorange}52},{\color{myblue}49}&{\color{myorange}49},{\color{myblue}46}&{\color{myorange}60},{\color{myblue}60}&{\color{myorange}60},{\color{myblue}61}\\
\midrule
\midrule
\multicolumn{16}{c}{$\xi = $\numval{1e-08}} \\
\multicolumn{1}{c}{~}&\multicolumn{5}{c}{$\lambda = $\numval{1}}&\multicolumn{5}{c}{$\lambda = $\numval{10000}}&\multicolumn{5}{c}{$\lambda = $\numval{100000000}}\\
$\alpha_{p_2} \!\!\setminus \!\!R_2^{-1} $ &\numval{1}&\numval{100}&\numval{10000}&\numval{1000000}&\numval{100000000}&\numval{1}&\numval{100}&\numval{10000}&\numval{1000000}&\numval{100000000}&\numval{1}&\numval{100}&\numval{10000}&\numval{1000000}&\numval{100000000}\\
\hline
\numval{1}&{\color{myorange}46},{\color{myblue}41}&{\color{myorange}44},{\color{myblue}41}&{\color{myorange}45},{\color{myblue}40}&{\color{myorange}45},{\color{myblue}40}&{\color{myorange}45},{\color{myblue}40}&{\color{myorange}46},{\color{myblue}41}&{\color{myorange}44},{\color{myblue}41}&{\color{myorange}45},{\color{myblue}40}&{\color{myorange}45},{\color{myblue}40}&{\color{myorange}45},{\color{myblue}40}&{\color{myorange}46},{\color{myblue}41}&{\color{myorange}44},{\color{myblue}41}&{\color{myorange}45},{\color{myblue}40}&{\color{myorange}45},{\color{myblue}40}&{\color{myorange}45},{\color{myblue}40}\\
\numval{0.01}&{\color{myorange}51},{\color{myblue}48}&{\color{myorange}46},{\color{myblue}41}&{\color{myorange}48},{\color{myblue}45}&{\color{myorange}53},{\color{myblue}52}&{\color{myorange}54},{\color{myblue}53}&{\color{myorange}51},{\color{myblue}48}&{\color{myorange}46},{\color{myblue}41}&{\color{myorange}48},{\color{myblue}45}&{\color{myorange}53},{\color{myblue}52}&{\color{myorange}54},{\color{myblue}53}&{\color{myorange}51},{\color{myblue}48}&{\color{myorange}46},{\color{myblue}41}&{\color{myorange}48},{\color{myblue}45}&{\color{myorange}53},{\color{myblue}52}&{\color{myorange}54},{\color{myblue}53}\\
\numval{0.0001}&{\color{myorange}58},{\color{myblue}54}&{\color{myorange}51},{\color{myblue}47}&{\color{myorange}49},{\color{myblue}45}&{\color{myorange}60},{\color{myblue}60}&{\color{myorange}60},{\color{myblue}62}&{\color{myorange}58},{\color{myblue}54}&{\color{myorange}51},{\color{myblue}47}&{\color{myorange}49},{\color{myblue}45}&{\color{myorange}60},{\color{myblue}60}&{\color{myorange}60},{\color{myblue}62}&{\color{myorange}58},{\color{myblue}54}&{\color{myorange}51},{\color{myblue}47}&{\color{myorange}49},{\color{myblue}45}&{\color{myorange}60},{\color{myblue}60}&{\color{myorange}60},{\color{myblue}62}\\
\numval{1e-06}&{\color{myorange}59},{\color{myblue}56}&{\color{myorange}52},{\color{myblue}49}&{\color{myorange}49},{\color{myblue}46}&{\color{myorange}60},{\color{myblue}60}&{\color{myorange}60},{\color{myblue}61}&{\color{myorange}59},{\color{myblue}56}&{\color{myorange}52},{\color{myblue}49}&{\color{myorange}49},{\color{myblue}46}&{\color{myorange}60},{\color{myblue}60}&{\color{myorange}60},{\color{myblue}61}&{\color{myorange}59},{\color{myblue}56}&{\color{myorange}52},{\color{myblue}49}&{\color{myorange}49},{\color{myblue}46}&{\color{myorange}60},{\color{myblue}60}&{\color{myorange}60},{\color{myblue}61}\\
\numval{1e-08}&{\color{myorange}58},{\color{myblue}56}&{\color{myorange}52},{\color{myblue}49}&{\color{myorange}49},{\color{myblue}46}&{\color{myorange}60},{\color{myblue}60}&{\color{myorange}60},{\color{myblue}61}&{\color{myorange}58},{\color{myblue}56}&{\color{myorange}52},{\color{myblue}49}&{\color{myorange}49},{\color{myblue}46}&{\color{myorange}60},{\color{myblue}60}&{\color{myorange}60},{\color{myblue}61}&{\color{myorange}58},{\color{myblue}56}&{\color{myorange}52},{\color{myblue}49}&{\color{myorange}49},{\color{myblue}46}&{\color{myorange}60},{\color{myblue}60}&{\color{myorange}60},{\color{myblue}61}\\
\bottomrule\bottomrule\end{tabular}
    
    \caption{Results for the second test case of
    Section~\ref{sec::secondtest}. Orange color is used for $\ell=2$, blue
    for $\ell=3$.} \label{tab::secondtest}
\end{table}

\subsubsection{Robustness with respect to $\mathcal{T}_h$ and the
polynomial order $\ell$.}\label{sec::thirdtest} 

In the last example we discuss the performance of the preconditioner
with respect to the polynomial order and the number of elements of the
triangulation. To this end, we solve the problem of
Section~\ref{sec::testproblem} using the polynomial orders
$\ell=1,\ldots,6$ and several meshes with $|\mathcal{T}_h| = 48, 162, 384,
750, 1296, 2058$ elements. We use constant values $\alpha_{p} = R =
\xi = 10^{-4}$ and $\lambda = 1$. Since we have used a direct solver
to invert $\mathcal{B}$ and $\tilde{\mathcal{B}}$, a dependency with
respect to the mesh size is not expected, and is validated by the
numbers of the right Table~\ref{tab::thirdtest}. In addition to that, the
results also show that the solver is very robust with respect to the
polynomial order.


    \begin{table}[]
            \centering  
            \begin{tabular}{c|cccccc} 

$\ell \!\setminus \!\!| \mathcal{T}_h | $ &48&162&384&750&1296&2058\\
\midrule
1&19&37&50&53&53&52\\
\rowcolor{mygray}
2&46&52&48&46&44&44\\
3&50&48&45&43&41&41\\
\rowcolor{mygray}
4&52&49&46&43&41&41\\
5&51&46&43&41&41&41\\
\rowcolor{mygray}
6&47&43&42&41&42&42\\
\end{tabular}
    
            \caption{Results for the third test case of Section~\ref{sec::thirdtest}.} \label{tab::thirdtest}
    \end{table}

\subsection{Four-network model of the human brain}
As an application of the MPET formulation~\eqref{eq::threefield}, we
consider a 4-network model of the human brain. This model has already
been studied for the total pressure formulation, e.g. in
\cite{lee2019spacetime}. The brain geometry $\Omega$ originates from
the Colin 27 Adult Brain Atlas FEM mesh. The surface $\partial \Omega$
is split into the outer boundary representing the skull $\Gammas$, and
the inner boundary representing the ventricles $\Gammav$. For the
computations we use the same mesh as in \cite{lee2019spacetime, colinmesh} with
99605 elements and 29037 vertices, see Figure~\ref{fig:brain_mesh}. We
take for the unscaled parameters in problem~\eqref{eq::threefield} the
values summarized in Table~\ref{parameters_MPET4}. Note that the
Lam\'{e} parameters can be expressed in terms of Young's modulus of
elasticity $ E $ and the Poisson ratio $ \nu \in \left[0, 1/2\right) $
via $\lambda  = \frac{\nu E}{(1 + \nu) (1 - 2 \nu)}$, and $\mu  =
\frac{E}{2(1 + \nu)}$. For the
boundary conditions (all values given in mmHg) we use
\begin{alignat*}{5}
    p_1 &= 5+2 \sin (2\pi t) && \quad \textrm{on } \Gammas, &&\quad & p_1 &= 5+2.012\sin(2 \pi t) && \quad \textrm{on } \Gammav, \\ 
    p_2 & = 70+10 \sin (2\pi t) && \quad \textrm{on } \Gammas, &&\quad & \nabla p_2\cdot \bm n & = 0 && \quad \textrm{on } \Gammav, \\
    p_3 & =  6 && \quad \textrm{on } \Gammas, &&\quad & p_3 & = 6 && \quad \textrm{on } \Gammav, \\
    \nabla p_4\cdot \bm n & = 0 &&\quad \textrm{on } \Gammas, &&\quad & \nabla p_4\cdot \bm n & = 0 && \quad \textrm{on } \Gammav, \\
    \bm u  &= 0 &&\quad \textrm{on } \Gammas, &&\quad &   \bm \sigma \bm n  & = -\sum_{i=1}^4 \alpha_i p_{i,V} \bm n && \quad \textrm{on } \Gammav,
\end{alignat*}
where $p_{i,V}$ for $i=1,\ldots,4$ are prescribed pressures on
the ventricles given by 
\begin{align*}
p_{1,V} = 5+2.012\sin(2 \pi t), \quad p_{2,V} = 70+10 \sin (2\pi t), \quad p_{3,V} = 6, \quad p_{4,V} = 38.
\end{align*}
The initial conditions from \cite{lee2019spacetime} for $t = 0$ are
given by $\bm u = 0, p_1 = 5, p_2 = 70, p_3 = 6$, and $p_4 = 38$. We
solved the time dependent problem using different timesteps (discussed
in detail below) and a fixed polynomial order of $\ell=1$. For each
timestep the problem was solved iteratively converging with
approximately 15 iterations. 

As a first test and to compare our results with
\cite{lee2019spacetime} we used a timestep size of $\tau = 0.0125s$ and
solved the problem until $T = 3$. In Figure~\ref{fig::brain} we can
see the absolute value of the displacement and the pressure
distributions at $t = 0.25s$. Further we plotted in
Figure~\ref{fig::eval_absu} and Figure~\ref{fig::brainevals_pres} the
absolute value of the displacement and the pressures at three fixed
points in space given by $P_1 = (89.9,108.9,82.3)$, $P_2 =
(102.2,139.3,82.3)$, and $P_3 = (110.7,108.9,98.5)$, which are located
close to the ventricles. We see that the results are in agreement with
\cite{lee2019spacetime}. The biggest difference appears in the
extra-cellular pressure $p_1$ which in our simulations does not exhibit the
oscillating behavior that can be found in \cite{lee2019spacetime}. To
discuss this in more detail let us recall the unscaled equations
\eqref{eq::threefield-2}--\eqref{eq::threefield-3} for $i=1$
\begin{align} \label{eq::scaledpex}
    \frac{\partial}{\partial t}s_1 p_1  - \divv (K_1 \nabla p_1) + (\xi_{12} + \xi_{13} + \xi_{14}) p_1 &= \tilde{g}_i - \frac{\partial}{\partial t}\alpha_i \divv \bu + \xi_{12} p_2 + \xi_{13} p_3 + \xi_{14} p_4,
\end{align}
with (see Table~\ref{parameters_MPET4})
\begin{align*}
    s_1 = 3.9 \cdot 10^{-4} \quad \textrm{and} \quad K_1 = 1.57 \cdot 10^{-5} \quad \textrm{and} \quad \xi_1 :=  \xi_{12} + \xi_{13} + \xi_{14} = 2 \cdot 10^{-6}.
\end{align*}
This shows that if $p_1$ is stationary ($ {\partial p_1}/{\partial t}=
0$) the magnitude of the diffusive coefficient $K_1$ is bigger as
the source coefficient $\xi_1$, i.e. no boundary layer is expected.
However if the time derivative dominates (in addition to $s_1$ being
ten times bigger as $K_1$) a boundary layer might occur in certain
cases depending on the right hand side and the considered boundary
conditions. 
Although our triangulation does not inherit a boundary refinement
which would be needed to surpass oscillations appearing from non
resolved boundary layers when using continuous finite elements,    
our discretization \eqref{saddle-point} is not effected by non
physical oscillations since boundary conditions are only incorporated
in a weak (Nitsche-like) sense. Indeed, the evaluation of the
extra-cellular pressure $p_1$ in Figure~\ref{fig:brain_cell} appears
to be nearly constant. In contrast to $p_1$ all other pressures do not
show a boundary layer which can be explained by the much bigger
diffusive coefficients $K_2, K_3$ and $K_4$ (compared to corresponding
coefficients $s_i$).

Since the pressures $p_1$ and $p_2$ in Figure~\ref{fig:eval_pone} and
Figure~\ref{fig:eval_pfour} suggest that the system is not in a steady
periodic state yet, we calculated a long term simulation until
$T=2500s$ with a time step $\tau = 0.125s$. In
Figure~\ref{fig::brainevals_pres_mean} we have plotted the periodic
mean values of the pressures at the points $P_j$ defined by
\begin{align*}
    \langle p_i \rangle (P_j, t_k) := \int_{t_k-1/2}^{t_k+1/2} p_i(P_j, s) \ds, 
\end{align*}
and a periodic solution appears at approximately $t = 1500s$. The
definition of the mean value $\langle \cdot \rangle(\cdot, t_k)$ was
appropriately changed for $t_k < 0.5$. Note that although
Figure~\ref{fig:eval_pthree_mean} suggests a non-smooth time
dependency of the venous pressure at the beginning,
Figure~\ref{fig:eval_pthree} reveals that there is a steep but smooth
increase. 
%


\begin{figure}
    \begin{subfigure}{0.45\textwidth}
        \centering
        \includegraphics[width=.9\linewidth]{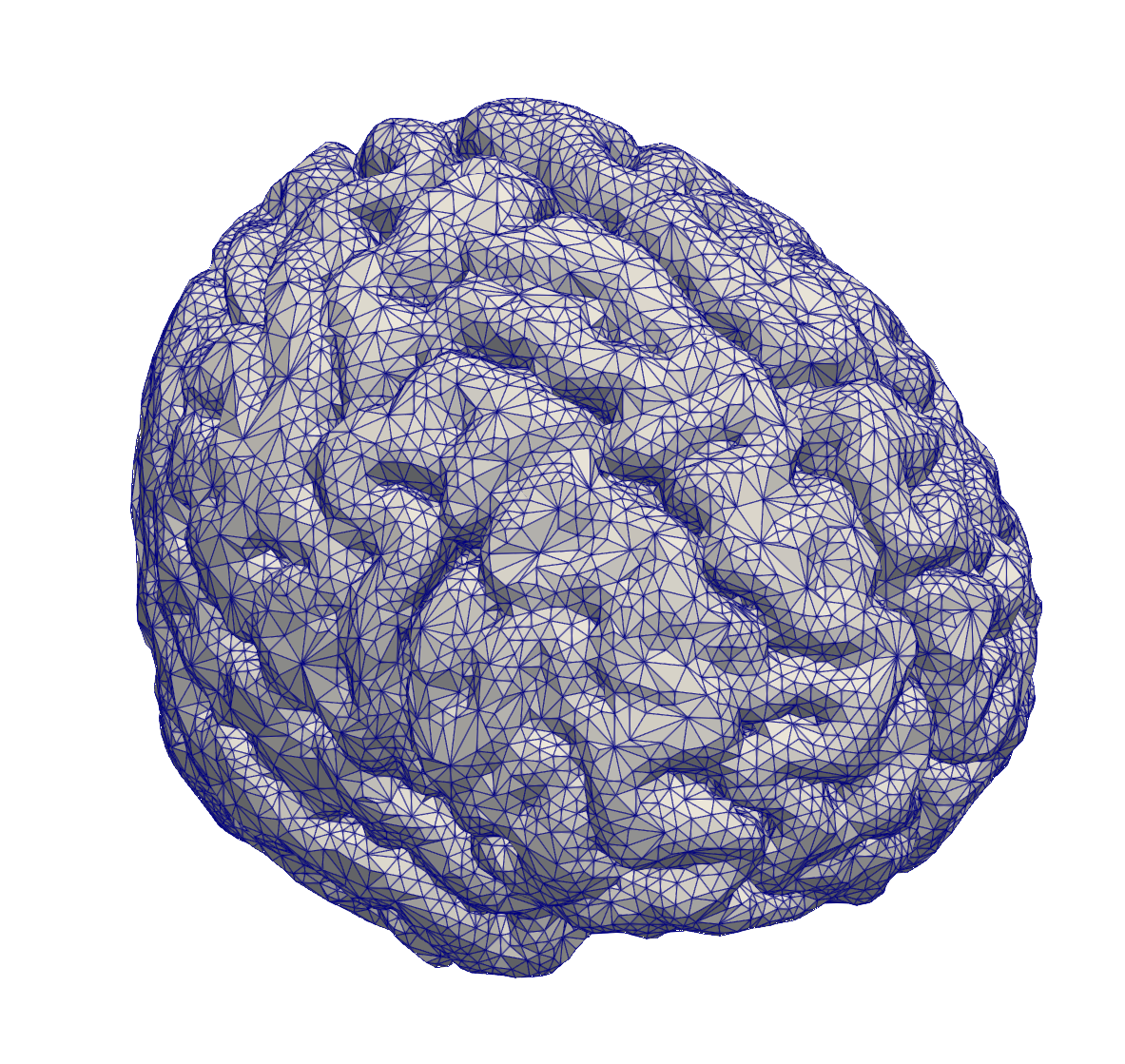}  
        \caption{Triangulation}
        \label{fig:brain_mesh}
    \end{subfigure}
    \begin{subfigure}{0.45\textwidth}
        \centering
        \begin{tikzpicture}
            \node[anchor=south west] (bar1) at (1,-0.5) {
                \pgfplotscolorbardrawstandalone[colormap/jet,colorbar sampled,colorbar horizontal,point meta min=0,point meta max=0.06, 
                colorbar style={samples=17, width=4cm, height = 0.2cm, xtick={0, 0.06}, 
                scaled x ticks=false, 
                xticklabel style={style={font=\footnotesize}, /pgf/number format/fixed,  /pgf/number format/precision=2}}]};
            \node[anchor=south west] (numex1) at (0,0)  {\includegraphics[width=0.8\textwidth]{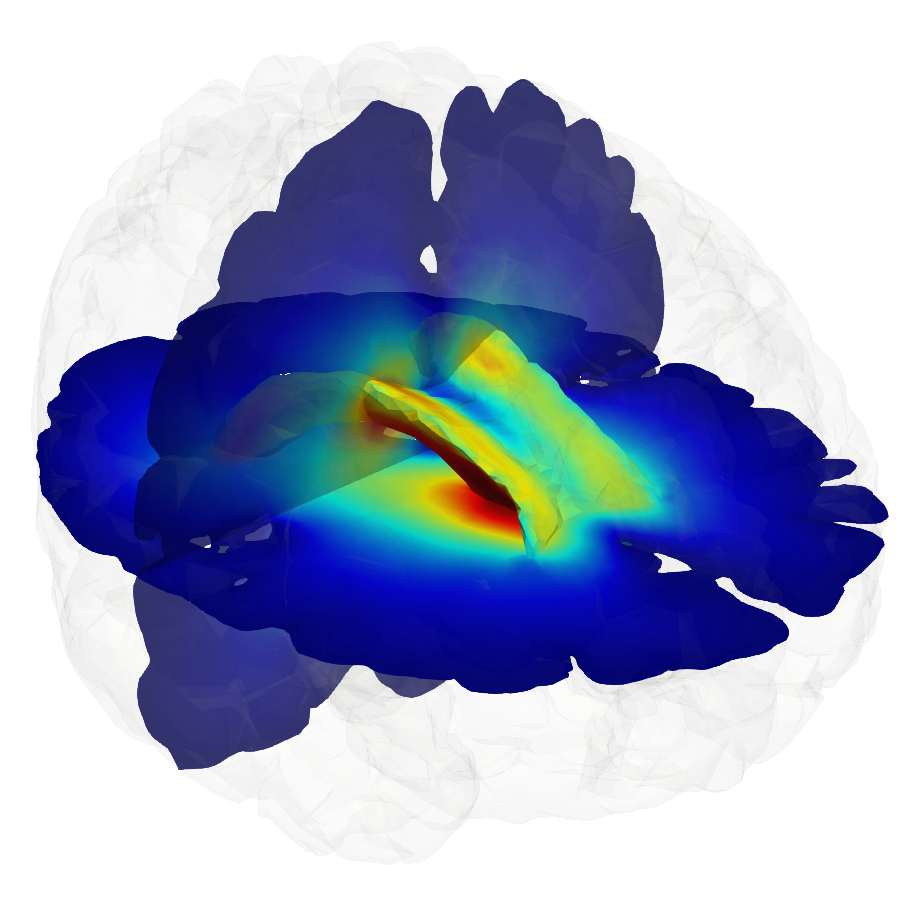}};
          \end{tikzpicture}
        \caption{Absolute value of displacement (mm)}
        \label{fig:brain_displ}
    \end{subfigure}
    \newline
    \begin{subfigure}{0.45\textwidth}
        \centering
        \begin{tikzpicture}
            \node[anchor=south west] (bar1) at (1,-0.5) {
                \pgfplotscolorbardrawstandalone[colormap/jet,colorbar sampled,colorbar horizontal,point meta min=4.5,point meta max=7.5, 
                colorbar style={samples=17, width=4cm, height = 0.2cm, xtick={4.5, 7.5}, 
                scaled x ticks=false, 
                xticklabel style={style={font=\footnotesize}, /pgf/number format/fixed,  /pgf/number format/precision=2}}]};
            \node[anchor=south west] (numex1) at (0,0)  {\includegraphics[width=0.8\textwidth]{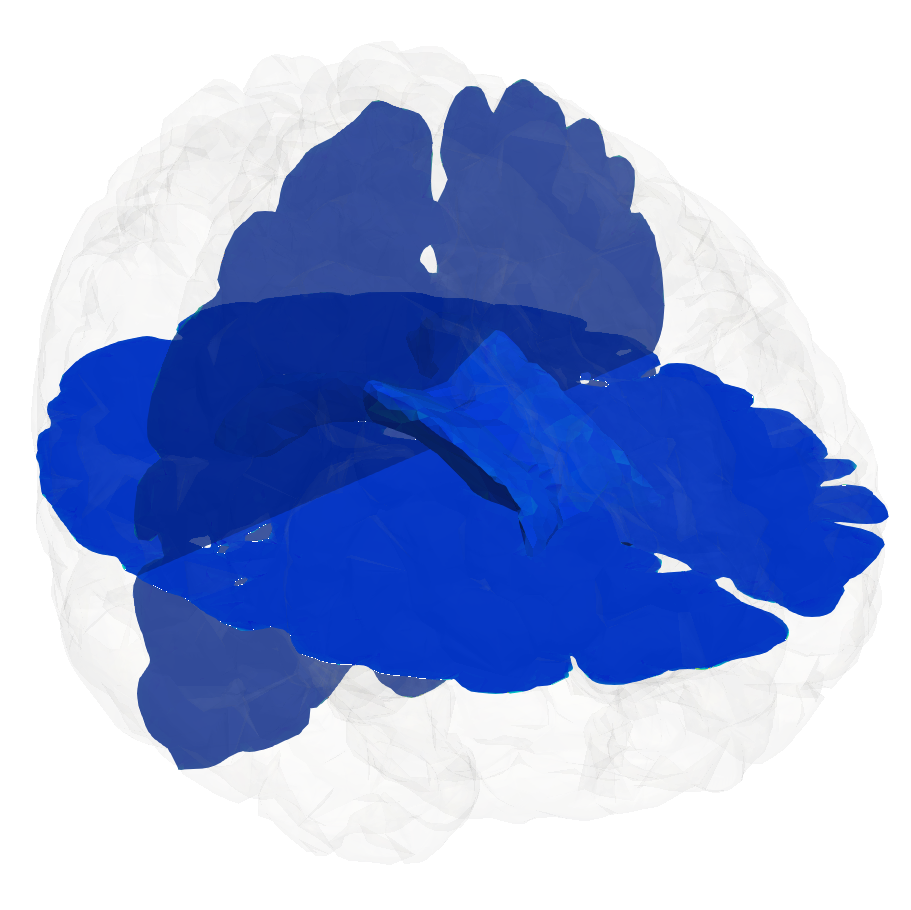}};
          \end{tikzpicture}
        \caption{Extra-cellular pressure (mmHg)}
        \label{fig:brain_cell}
    \end{subfigure}
    \begin{subfigure}{0.45\textwidth}
        \centering
        \begin{tikzpicture}
            \node[anchor=south west] (bar1) at (1,-0.5) {
                \pgfplotscolorbardrawstandalone[colormap/jet,colorbar sampled,colorbar horizontal,point meta min=70,point meta max=80, 
                colorbar style={samples=17, width=4cm, height = 0.2cm, xtick={70, 80}, 
                scaled x ticks=false, 
                xticklabel style={style={font=\footnotesize}, /pgf/number format/fixed,  /pgf/number format/precision=2}}]};
            \node[anchor=south west] (numex1) at (0,0)  {\includegraphics[width=0.8\textwidth]{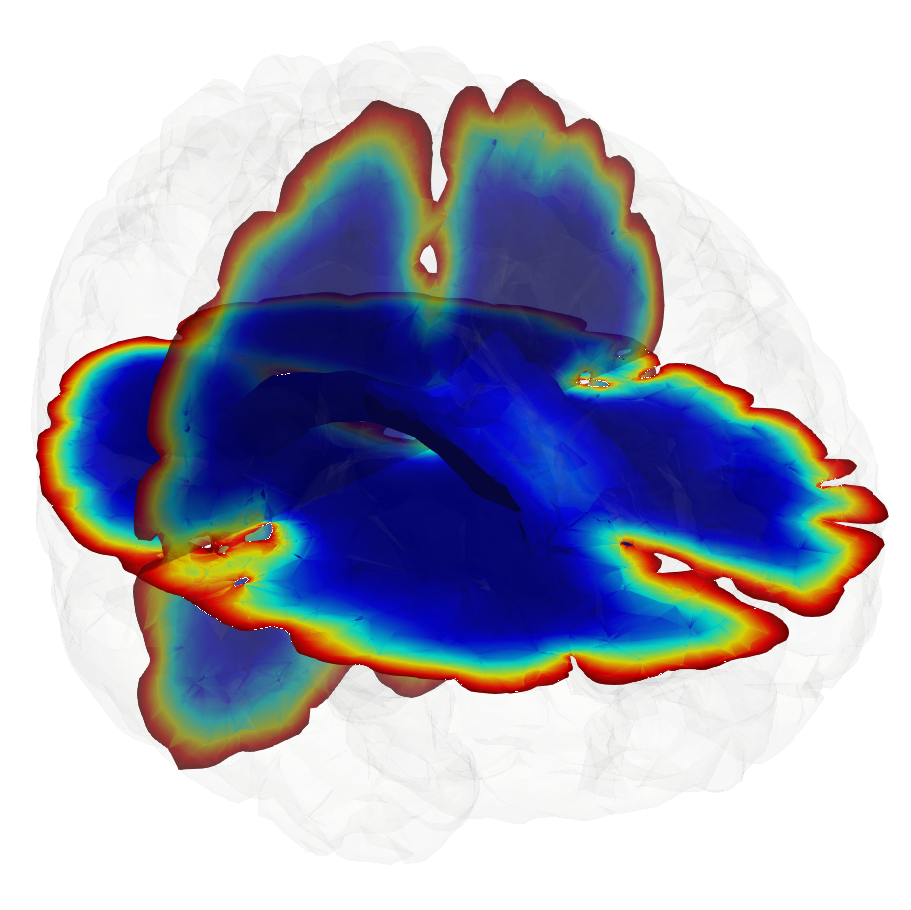}};
          \end{tikzpicture}
        \caption{Arterial pressure (mmHg)}
        \label{fig:brain_art}
    \end{subfigure}
    \newline
    \begin{subfigure}{0.45\textwidth}
        \centering
        \begin{tikzpicture}
            \node[anchor=south west] (bar1) at (1,-0.5) {
                \pgfplotscolorbardrawstandalone[colormap/jet,colorbar sampled,colorbar horizontal,point meta min=6.0,point meta max=6.18, 
                colorbar style={samples=17, width=4cm, height = 0.2cm, xtick={6.0, 6.18}, 
                scaled x ticks=false, 
                xticklabel style={style={font=\footnotesize}, /pgf/number format/fixed,  /pgf/number format/precision=2}}]};
            \node[anchor=south west] (numex1) at (0,0)  {\includegraphics[width=0.8\textwidth]{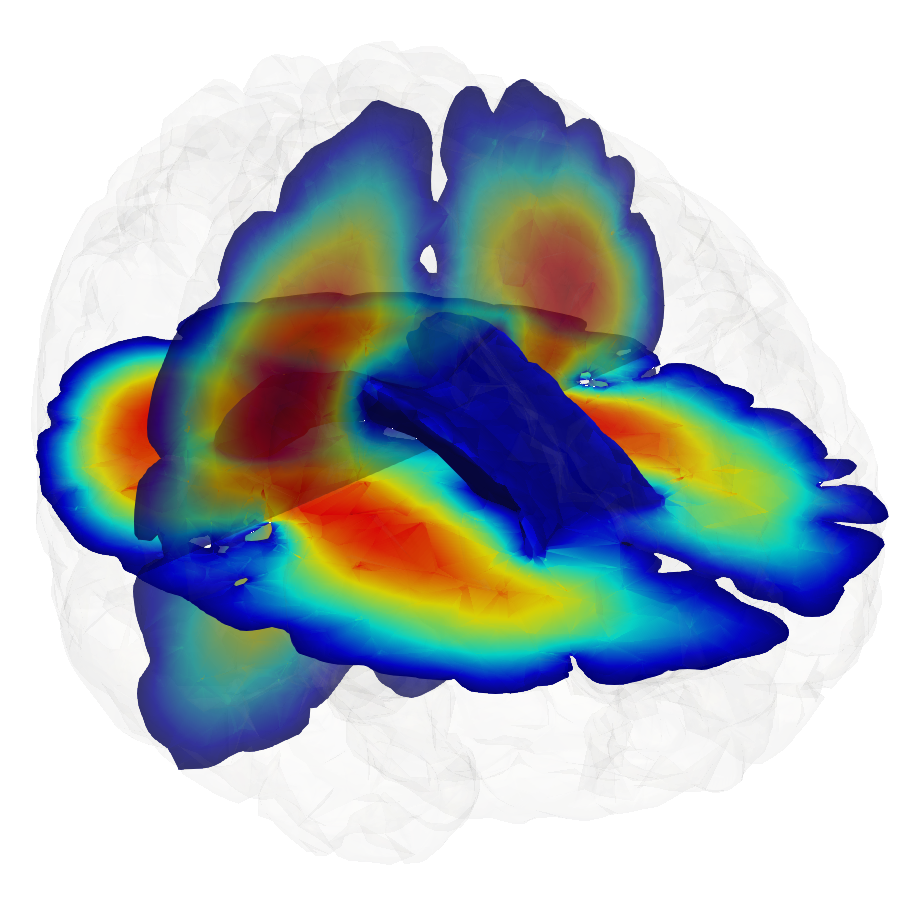}};
          \end{tikzpicture}
        \caption{Venous pressure (mmHg)}
        \label{fig:brain_veins}
    \end{subfigure}
    \begin{subfigure}{0.45\textwidth}
        \centering
        \begin{tikzpicture}
            \node[anchor=south west] (bar1) at (1,-0.5) {
                \pgfplotscolorbardrawstandalone[colormap/jet,colorbar sampled,colorbar horizontal,point meta min=37.973,point meta max=37.975, 
                colorbar style={samples=17, width=4cm, height = 0.2cm, xtick={37.973, 37.975}, 
                scaled x ticks=false, 
                xticklabel style={style={font=\footnotesize}, /pgf/number format/fixed,  /pgf/number format/precision=3}}]};
            \node[anchor=south west] (numex1) at (0,0)  {\includegraphics[width=0.8\textwidth]{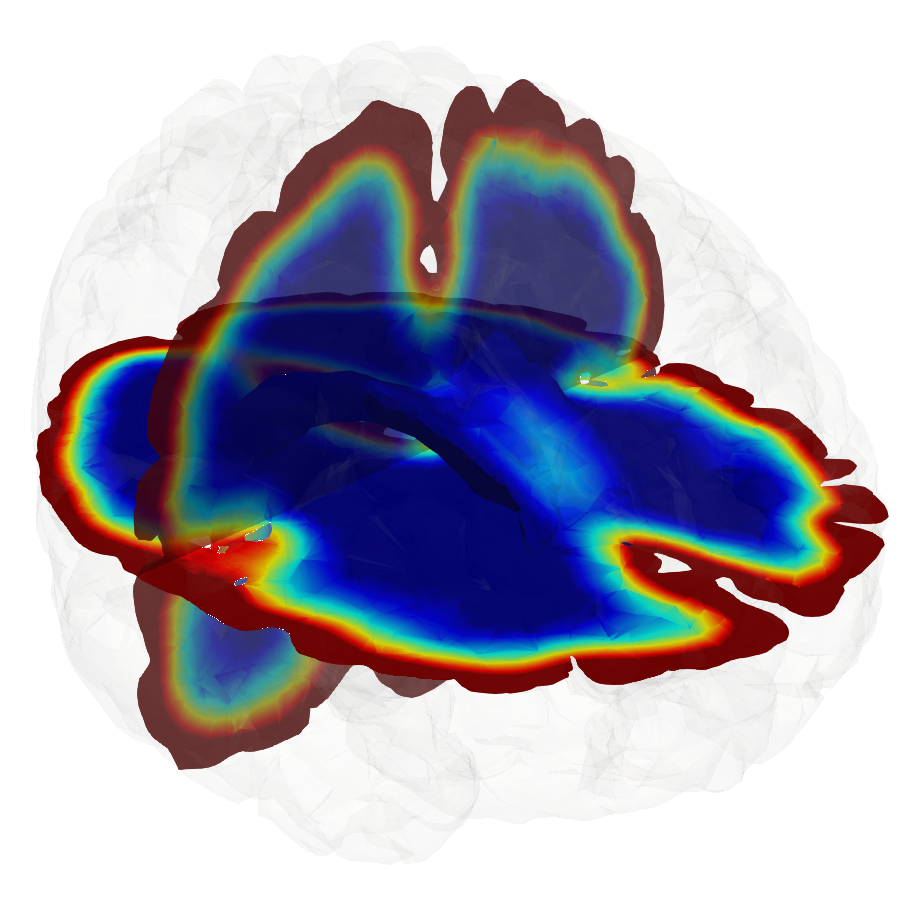}};
          \end{tikzpicture}
        \caption{Capillary pressure (mmHg)}
        \label{fig:brain_cap}
    \end{subfigure}
    \caption{Triangulation of the human brain and the absolute value
    of the displacement and pressure distributions for $t=0.25$.}
    \label{fig::brain}
\end{figure}

\begin{figure}
        \centering
        \begin{tikzpicture}[scale = 0.8]
            \input{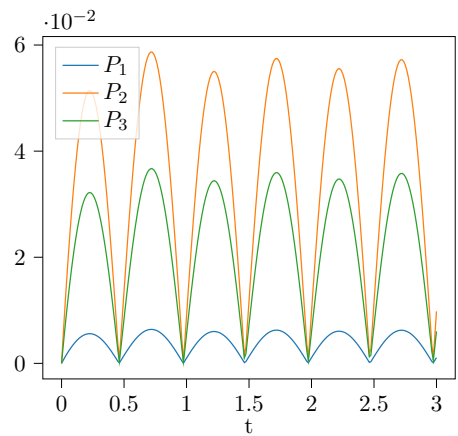}     
        \end{tikzpicture} 
        \caption{Evaluation of $| \bm u|$ at $P_1$, $P_2$ and $P_3$ (mm).}
        \label{fig::eval_absu}
\end{figure}

\begin{figure}
    \begin{subfigure}{0.45\textwidth}
        \centering
        \begin{tikzpicture}[scale = 0.8]
            \input{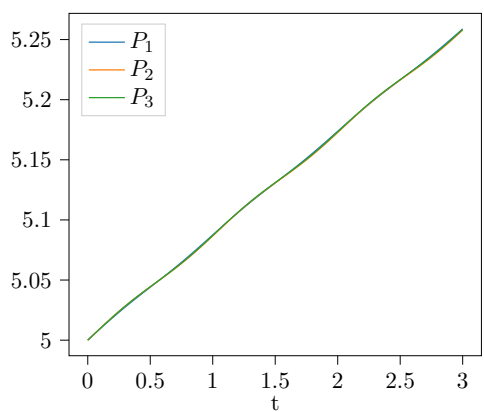}     
        \end{tikzpicture}     
        \caption{Evaluation of the extra-cellular pressure}
        \label{fig:eval_pone}
    \end{subfigure}
    \begin{subfigure}{0.45\textwidth}
        \centering
        \begin{tikzpicture}[scale = 0.8]
            \input{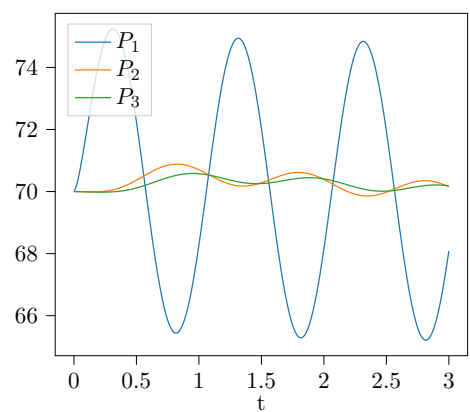}     
        \end{tikzpicture}     
        \caption{Evaluation of the arterial pressure}
        \label{fig:eval_ptwo}
    \end{subfigure}
    \par\bigskip 
    \begin{subfigure}{0.45\textwidth}
        \centering
        \begin{tikzpicture}[scale = 0.8]
            \input{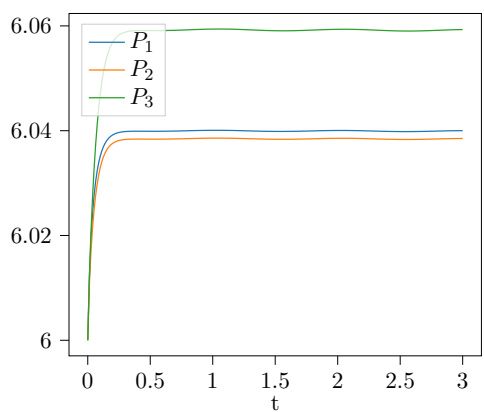}     
        \end{tikzpicture}     
        \caption{Evaluation of the venous pressure}
        \label{fig:eval_pthree}
    \end{subfigure}
    \begin{subfigure}{0.45\textwidth}
        \centering
        \begin{tikzpicture}[scale = 0.8]
            \input{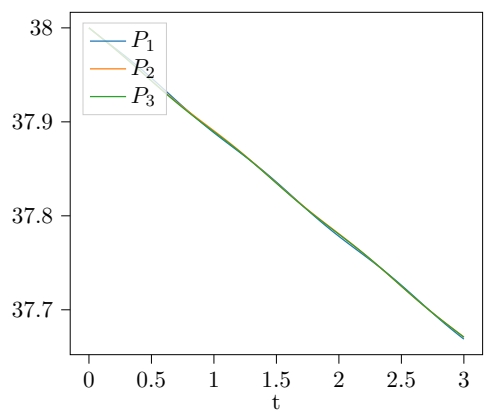}     
        \end{tikzpicture}     
        \caption{Evaluation of the capillary pressure}
        \label{fig:eval_pfour}
    \end{subfigure}
    \caption{Evaluation of $p_1,p_2,p_3,p_4$ at $P_1$, $P_2$ and $P_3$ (mmHg).}
    \label{fig::brainevals_pres}
\end{figure}

\begin{figure}
    \begin{subfigure}{0.45\textwidth}
        \centering
        \begin{tikzpicture}[scale = 0.8]
            \input{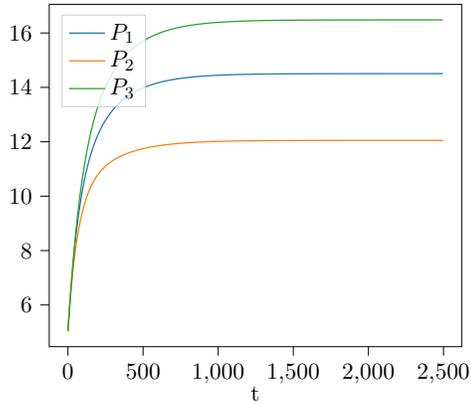}     
        \end{tikzpicture}     
        \caption{Time-mean value of the extra-cellular pressure}
        \label{fig:eval_pone_mean}
    \end{subfigure}
    \begin{subfigure}{0.45\textwidth}
        \centering
        \begin{tikzpicture}[scale = 0.8]
            \input{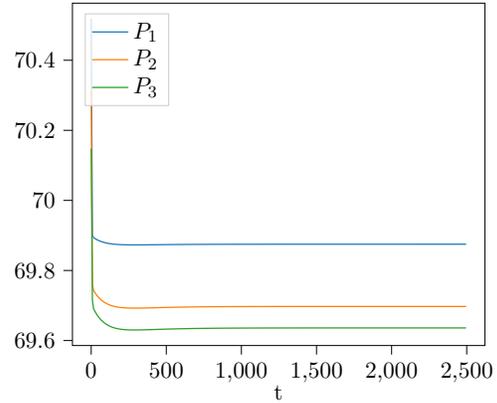}     
        \end{tikzpicture}     
        \caption{Time-mean value of the arterial pressure}
        \label{fig:eval_ptwo_mean}
    \end{subfigure}
    \par\bigskip 
    \begin{subfigure}{0.45\textwidth}
        \centering
        \begin{tikzpicture}[scale = 0.8]
            \input{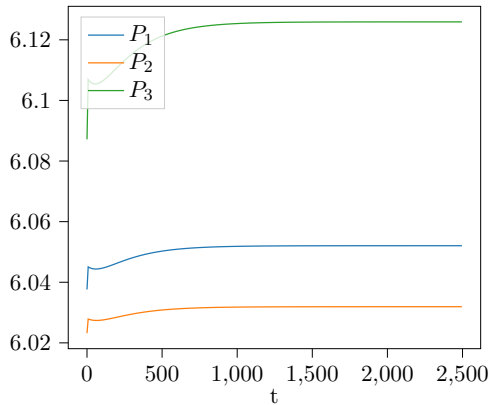}     
        \end{tikzpicture}     
        \caption{Time-mean value of the venous pressure}
        \label{fig:eval_pthree_mean}
    \end{subfigure}
    \begin{subfigure}{0.45\textwidth}
        \centering
        \begin{tikzpicture}[scale = 0.8]
            \input{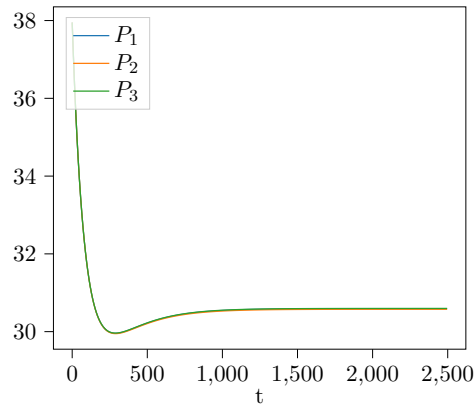}     
        \end{tikzpicture}     
        \caption{Time-mean value of the capillary pressure}
        \label{fig:eval_pfour_mean}
    \end{subfigure}
    \caption{Evaluation of the mean values $\langle p_1 \rangle, \langle p_2 \rangle, \langle p_3  \rangle, \langle p_4 \rangle$ at $P_1$, $P_2$ and $P_3$ (mmHg).}
    \label{fig::brainevals_pres_mean}
\end{figure}



\begin{table}[h!]
    \centering
    \begin{tabular}{ccc}
        \hline
        parameter & value & unit \\[0.0ex] \hline
        $\nu$ & $0.4999$ &  \\ [0.0ex]
        $E$ & $1500$ & Nm$^{-2}$ \\ [0.0ex]
        $s_{1}$ & $3.9 *10^{-4}$ & N$^{-1}$m$^2$ \\ [0.0ex]
        $s_{2}=s_{4}$ & $2.9 *10^{-4}$ & N$^{-1}$m$^2$ \\ [0.0ex]
        $s_{3}$ & $1.5 *10^{-5}$ & N$^{-1}$m$^2$ \\ [0.0ex]
        $\alpha_{1}$  & $0.49$ & \\ [0.0ex]
        $\alpha_{2}=\alpha_{4}$  & $0.25$ & \\ [0.0ex]
        $\alpha_{3}$  & $0.01$ & \\ [0.0ex]
        $K_{1}$ & $1.57*10^{-5}$ & mm$^2$N$^{-1}$m$^2$s$^{-1}$ \\[0.0ex]
        $K_{2}=K_{3}=K_{4}$ & $3.75*10^{-2}$ & mm$^2$N$^{-1}$m$^2$s$^{-1}$ \\[0.0ex]
        $\xi_{13}=\xi_{14}$ & $1.0* 10^{-6}$ & N$^{-1}$m$^2$s$^{-1}$ \\ [0.0ex]
        $\xi_{24}=\xi_{34}$ & $1.0* 10^{-6}$ & N$^{-1}$m$^2$s$^{-1}$ \\ [0.0ex]
        $\xi_{12}=\xi_{23}$ & $0.0$ & N$^{-1}$m$^2$s$^{-1}$ \\
        \hline
    \end{tabular}
    \caption{Reference values of model parameters for the four-network brain model taken from \cite{lee2019spacetime}.}
    \label{parameters_MPET4}
\end{table}




\bibliographystyle{plain}
\bibliography{literature}

\begin{thebibliography}{10}

\bibitem{arnold2002unified}
Douglas~N Arnold, Franco Brezzi, Bernardo Cockburn, and L~Donatella Marini.
\newblock Unified analysis of discontinuous {Galerkin} methods for elliptic
  problems.
\newblock {\em SIAM journal on numerical analysis}, 39(5):1749--1779, 2002.

\bibitem{Boffi2013mixed}
D.~Boffi, F.~Brezzi, and M.~Fortin.
\newblock {\em Mixed finite element methods and applications}, volume~44 of
  {\em Springer Ser. Comput. Math.}
\newblock Springer, Heidelberg, 2013.

\bibitem{Chou2016afully}
Dean Chou, John~C. Vardakis, Liwei Guo, Brett~J. Tully, and Yiannis Ventikos.
\newblock A fully dynamic multi-compartmental poroelastic system: Application
  to aqueductal stenosis.
\newblock {\em Journal of Biomechanics}, 49(11):2306--2312, 2016.
\newblock Selected Articles from the International Conference on CFD in
  Medicine and Biology (Albufeira, Portugal – August 30th - September 4th,
  2015).

\bibitem{MR2051067}
Bernardo Cockburn and Jayadeep Gopalakrishnan.
\newblock A characterization of hybridized mixed methods for second order
  elliptic problems.
\newblock {\em SIAM J. Numer. Anal.}, 42(1):283--301, 2004.

\bibitem{Cockburn2005incompressibleI}
Bernardo Cockburn and Jayadeep Gopalakrishnan.
\newblock Incompressible finite elements via hybridization. part i: The stokes
  system in two space dimensions.
\newblock {\em SIAM Journal on Numerical Analysis}, 43(4):1627--1650, 2005.

\bibitem{Cockburn2005incompressibleII}
Bernardo Cockburn and Jayadeep Gopalakrishnan.
\newblock Incompressible finite elements via hybridization. part ii: The stokes
  system in three space dimensions.
\newblock {\em SIAM Journal on Numerical Analysis}, 43(4):1651--1672, 2005.

\bibitem{CockburnEtAl2009unified}
Bernardo Cockburn, Jayadeep Gopalakrishnan, and Raytcho Lazarov.
\newblock Unified hybridization of discontinuous {G}alerkin, mixed, and
  continuous {G}alerkin methods for second order elliptic problems.
\newblock {\em SIAM J. Numer. Anal.}, 47(2):1319--1365, 2009.

\bibitem{CockburnEtAl2005locally}
Bernardo Cockburn, Guido Kanschat, and Dominik Sch\"{o}tzau.
\newblock A locally conservative {LDG} method for the incompressible
  {N}avier-{S}tokes equations.
\newblock {\em Math. Comp.}, 74(251):1067--1095, 2005.

\bibitem{CockburnEtAl2002local}
Bernardo Cockburn, Guido Kanschat, Dominik Sch\"{o}tzau, and Christoph Schwab.
\newblock Local discontinuous {G}alerkin methods for the {S}tokes system.
\newblock {\em SIAM J. Numer. Anal.}, 40(1):319--343, 2002.

\bibitem{Fresca2021real}
Stefania Fresca and Andrea Manzoni.
\newblock Real-time simulation of parameter-dependent fluid flows through deep
  learning-based reduced order models.
\newblock {\em Fluids}, 6(7), 2021.

\bibitem{Guo2017subject}
Liwei Guo, John Vardakis, Toni Lassila, Micaela Mitolo, Nishant Ravikumar, Dean
  Chou, Matthias Lange, Ali Sarrami-Foroushani, Brett Tully, Zeike Taylor,
  Susheel Varma, Annalena Venneri, Alejandro Frangi, and Yiannis Ventikos.
\newblock Subject-specific multiporoelastic model for exploring the risk
  factors associated with the early stages of alzheimer's disease.
\newblock {\em Interface Focus}, 8, 12 2017.

\bibitem{Guo2020amultiple}
Liwei Guo, John~C. Vardakis, Dean Chou, and Yiannis Ventikos.
\newblock A multiple-network poroelastic model for biological systems and
  application to subject-specific modelling of cerebral fluid transport.
\newblock {\em International Journal of Engineering Science}, 147:103204, 2020.

\bibitem{Hong2019conservativeMPET}
Q.~Hong, J.~Kraus, M.~Lymbery, and F.~Philo.
\newblock Conservative discretizations and parameter-robust preconditioners for
  {B}iot and multiple-network flux-based poroelasticity models.
\newblock {\em Numer. Linear Algebra Appl.}, 2019.
\newblock e2242.

\bibitem{hong2021newframework}
Q.~Hong, J.~Kraus, M.~Lymbery, and F.~Philo.
\newblock A new framework for the stability analysis of perturbed saddle-point
  problems and applications in poromechanics.
\newblock {\em arXiv:2103.09357v3 [math.NA]}, 2021.

\bibitem{HongKraus2018}
Qingguo Hong and Johannes Kraus.
\newblock Parameter-robust stability of classical three-field formulation of
  {{Biot}}'s consolidation model.
\newblock {\em ETNA - Electronic Transactions on Numerical Analysis},
  48:202--226, 2018.

\bibitem{Kauffman2017overset}
Justin~A. Kauffman, Jason~P. Sheldon, and Scott~T. Miller.
\newblock Overset meshing coupled with hybridizable discontinuous galerkin
  finite elements.
\newblock {\em International Journal for Numerical Methods in Engineering},
  112(5), 3 2017.

\bibitem{kraus2020uniformly}
Johannes Kraus, Philip~L. Lederer, Maria Lymbery, and Joachim Sch\"{o}berl.
\newblock Uniformly well-posed hybridized discontinuous {G}alerkin/hybrid mixed
  discretizations for {B}iot’s consolidation model.
\newblock {\em Computer Methods in Applied Mechanics and Engineering},
  384:113991, 2021.

\bibitem{LedererEtAl2018hybrid}
Philip~L. Lederer, Christoph Lehrenfeld, and Joachim Sch\"{o}berl.
\newblock Hybrid discontinuous {G}alerkin methods with relaxed
  {$H(\text{div})$}-conformity for incompressible flows. {P}art {I}.
\newblock {\em SIAM J. Numer. Anal.}, 56(4):2070--2094, 2018.

\bibitem{LedererEtAl2019hybrid}
Philip~L. Lederer, Christoph Lehrenfeld, and Joachim Sch\"{o}berl.
\newblock Hybrid discontinuous {G}alerkin methods with relaxed
  {$H(\text{div})$}-conformity for incompressible flows. {P}art {II}.
\newblock {\em ESAIM Math. Model. Numer. Anal.}, 53(2):503--522, 2019.

\bibitem{lee2019spacetime}
J.~Lee, E.~Piersanti, K.-A. Mardal, and M.~Rognes.
\newblock A mixed finite element method for nearly incompressible
  multiple-network poroelasticity.
\newblock {\em SIAM J. Sci. Comput.}, 41:A722--A747, 2019.

\bibitem{LehrenfeldSchoeberl2016high}
Christoph Lehrenfeld and Joachim Sch\"{o}berl.
\newblock High order exactly divergence-free hybrid discontinuous {G}alerkin
  methods for unsteady incompressible flows.
\newblock {\em Comput. Methods Appl. Mech. Engrg.}, 307:339--361, 2016.

\bibitem{MR2086829}
D.~Loghin and A.~J. Wathen.
\newblock Analysis of preconditioners for saddle-point problems.
\newblock {\em SIAM J. Sci. Comput.}, 25(6):2029--2049, 2004.

\bibitem{colinmesh}
E.~Piersanti and M.~E. Rognes.
\newblock {Supplementary material (code) for A mixed finite element method for
  nearly incompressible multiple- network poroelasticity' by J. J. Lee, E.
  Piersanti, K.-A. Mardal and M. E. Rognes.}, April 2018.

\bibitem{UNnote2020}
{Population Division of the United Nations Department of Economic and Social
  Affairs (UN DESA)}.
\newblock World population ageing 2020 highlights.
\newblock
  \url{https://www.un.org/development/desa/pd/sites/www.un.org.development.desa.pd/files/files/documents/2020/Sep/un_pop_2020_pf_ageing_10_key_messages.pdf},
  2020.

\bibitem{netgen}
J.~Sch{\"o}berl.
\newblock {NETGEN An advancing front 2D/3D-mesh generator based on abstract
  rules}.
\newblock {\em Computing and Visualization in Science}, 1(1):41--52, 1997.

\bibitem{ngsolve}
J.~Sch{\"o}berl.
\newblock {C++11 Implementation of Finite Elements in NGSolve}.
\newblock {\em Institute for Analysis and Scientific Computing, Vienna
  University of Technology}, 2014.

\bibitem{Vardakis2016investigating}
John~C. Vardakis, Dean Chou, Brett~J. Tully, Chang~C. Hung, Tsong~H. Lee,
  Po-Hsiang Tsui, and Yiannis Ventikos.
\newblock Investigating cerebral oedema using poroelasticity.
\newblock {\em Medical Engineering \& Physics}, 38(1):48--57, 2016.
\newblock Micro and Nano Flows 2014 (MNF2014) - Biomedical Stream.

\bibitem{MR768638}
R.~Verf\"{u}rth.
\newblock A combined conjugate gradient-multigrid algorithm for the numerical
  solution of the {S}tokes problem.
\newblock {\em IMA J. Numer. Anal.}, 4(4):441--455, 1984.

\end{thebibliography}

\end{document}